\newtheorem{thm}{Theorem}[section]
\newtheorem{lem}[thm]{Lemma}
\newtheorem{prop}[thm]{Proposition}
\theoremstyle{remark}
\newtheorem{rem}{Remark}[section]
\newcommand{\al}{\alpha}
\newcommand{\vc}{\infty}
\newcommand{\f}{\frac}
\newcommand{\les}{\lesssim}
\newcommand{\bra}[1]{\langle #1 \rangle}
\newcommand{\bbar}{|\!|\!|}
\title[SHARP $L^p$ ESTIMATES FOR SCHR\"ODINGER GROUPS]{SHARP $L^p$ ESTIMATES FOR SCHR\"ODINGER GROUPS ON SPACES OF HOMOGENEOUS TYPE}         % Enter your title between curly braces
\author{The Anh Bui}
\address{The Anh Bui: Department of Mathematics, Macquarie University, NSW 2109, Australia}
\email{the.bui@mq.edu.au, bt\_anh80@yahoo.com}
\author{Piero D'ancona}
\address{Piero D'ancona: Sapienza – Universit\`a di Roma, Dipartimento di Matematica, Piazzale A. Moro   2, I-00185 Roma, Italy}
\email{dancona@mat.uniroma1.it}
\author{Fabio Nicola}
\address{Fabio Nicola: Dipartimento di Scienze Matematiche, Politecnico di Torino, Corso Duca degli Abruzzi 24, 10129 Torino, Italy}
\email{fabio.nicola@polito.it}
\thanks{The Anh Bui was supported by the research grant ARC DP 140100649}
\keywords{%
Schr\"{o}dinger group%
; metric measure spaces%
; doubling measure%
; spectral multipliers%
; heat kernels}
\subjclass[2010]{%
35P25% Scattering theory [See also 47A40]
, 35Q41% Time-dependent Schrodinger equations, Dirac equati
, 42B15% Multipliers
, 35K08% Heat kernel
}
\begin{document}

\date{}
\begin{abstract}
  We prove an $L^{p}$ estimate
  \begin{equation*}    
    \|e^{-itL}
    \varphi(L)f\|_{p}\les (1+|t|)^s\|f\|_p,
    \qquad t\in \mathbb{R},
    \qquad
    s=n\left|\f{1}{2}-\f{1}{p}\right|
  \end{equation*}
  for the Schr\"{o}dinger group generated by a 
  semibounded, self--adjoint 
  operator $L$ on a metric measure space $\mathcal{X}$ of
  homogeneous type (where $n$ is the doubling dimension
  of $\mathcal{X}$). The assumptions on $L$ are
  a mild $L^{p_{0}}\to L^{p_{0}'}$ smoothing estimate
  and a mild $L^{2}\to L^{2}$ off--diagonal estimate
  for the corresponding heat kernel $e^{-tL}$. The estimate is 
  uniform for $
  \varphi$ varying
  in bounded sets of $\mathscr{S}(\mathbb{R})$,
  or more generally of a suitable weighted Sobolev space.

  We also prove, under slightly stronger assumptions on $L$,
  that the estimate extends to
  \begin{equation*}
    \|e^{-itL}
    \varphi(\theta L)f\|_{p}\les (1+\theta^{-1}|t|)^s\|f\|_p,
    \qquad \theta>0, \quad t\in \mathbb{R},
  \end{equation*}
  with uniformity also for $\theta$ varying in bounded subsets
  of $(0,+\infty)$. For nonnegative operators uniformity
  holds for all $\theta>0$.
\end{abstract}

\maketitle

% \tableofcontents

\section{Introduction}

Bounds in $L^{p}$ for the Schr\"{o}dinger group $e^{it \Delta}$
have applications in harmonic
analysis and to nonlinear dispersive equations. The group
itself is not bounded in $L^{p}$ for $p\neq2$,
but $(1-\Delta)^{-s}e^{it \Delta}$ is $L^{p}$ bounded for
$s$ sufficiently large. A sharp estimate can be written
if one introduces a frequency cutoff
$\varphi\in C^{\infty}_{c}(\mathbb{R}^{n})$:
for all $1\leq p\leq\infty$, 
$k\in\mathbb{Z}$, $t\in \mathbb{R}$, we have
\begin{equation}\label{zero}
  \|e^{it\Delta}\varphi(2^{-k}(-\Delta)^{1/2}) f\|_{L^p}\lesssim 
  (1+2^{2k}|t|)^{s}\|f\|_{L^p},
  \qquad
  s=n\biggl|\frac{1}{2}-\frac{1}{p}\biggr|,
\end{equation}
see 
\cite{BrennerThomeeWahlbin75-a}, 
\cite{Sjostrand70-a},
\cite{Lanconelli68-b}.

This result can be regarded as an elementary example
of $L^{p}$ estimates with loss of derivatives for FIOs,
in the spirit of \cite{SeegerSoggeStein91-a}. However, 
our goal here is to extend \eqref{zero} in
a different direction, namely, to Schr\"{o}dinger groups 
$e^{itL}$ generated by
a semibounded, self--adjoint operator $L$ on 
a metric measure
space $\mathcal{X}$ endowed with a doubling measure. 
This framework covers a large variety of
situations which go far beyond the classical FIO setting.

Many properties of $L$ and functions of $L$ can be deduced
from suitable estimates on the corresponding heat kernel $e^{-tL}$.
A common assumption in the euclidean case 
(see \cite{Hebisch90-b}, \cite{DuongOuhabazSikora02-a})
is the Gaussian upper estimate
\begin{equation*}
  |e^{-tL}(x,y)|\lesssim t^{-n/m} 
  \exp\Big(-b\big(t^{-1/m}|x-y|\big)^{\frac{m}{m-1}}\Big),
  \quad \ t>0,\ x,y\in \mathbb{R}^{n},
\end{equation*}
for some $b>0$, $m>1$. 
This includes Schr\"{o}dinger operators perturbed with
an electromagnetic potential (in this case $m=2$: see
\cite{CacciafestaDAncona12-a}, \cite{CassanoDAncona15-a} for
some applications), and fractional Laplacians $(-\Delta)^{m/2}$ with $m$ even.
Note that these operators are already outside the reach of the
classical theory of singular operators.

In order to include more general operators, one
can weaken the assumptions on the heat kernel. In
\cite{DAnconaNicola15-a} we proposed, in the euclidean case,
to replace the Gaussian upper estimate with a weak 
$L^{p_{0}}\to L^{p_{0}'}$ smoothing estimate on dyadic cubes,
and an even weaker off--diagonal $L^{2}\to L^{2}$ algebraic decay
(see \eqref{eq1-assumption}, \eqref{eq2-assumption} below).
These conditions are much more inclusive, as discussed in
Remark \ref{rem2} below, but they still allow to recover the 
estimate \eqref{zero} at least in the restricted range 
$p\in[p_{0},p_{0}']$.

Here we study the more general situation of metric
measure spaces of homogeneous type.
More precisely, in the following we shall assume that
$(\mathcal{X},d,\mu)$ is a metric space with distance $d$,
equipped with a
nonnegative Borel measure $\mu$ which satisfies the 
\emph{doubling property}: there exists a constant $c_1>0$ 
such that
\begin{equation}\label{doublingpro}
  \mu(B(x,2r))\leq c_1\mu(B(x,r))
\end{equation}
for all $x\in \mathcal{X}$ and $r>0$, where $B(x, r)$ is
the open ball of radius $r$ and center $x$.
We recall that the doubling property (\ref{doublingpro})
implies the existence of $C>0$ and $n>0$ such that
\begin{equation*}
  \mu(B(x,\lambda r))\leq C\lambda^n \mu(B(x,r)),
  \qquad
  \forall\lambda>0.
\end{equation*}
We shall also assume that $\mathcal{X}$ satisfies
a \emph{reverse doubling condition}:
there exist $\kappa\in[0,n]$ and $C>0$ such that for all 
$x\in \mathcal{X}$, $0<r<{\rm diam}(\mathcal{X})/2$ 
and $1\leq \lambda<{\rm diam}(\mathcal{X})/(2r)$, one has
	\begin{equation}
	\label{RD-eq}
	C\lambda^{\kappa}\mu(B(x,r))\leq \mu(B(x,\lambda r))
	\end{equation}
	where ${\rm diam}(\mathcal{X})=\sup_{x,y\in \mathcal{X}}d(x,y)$.
Note that the reverse doubling condition is always 
satisfied with $\kappa=0$,
thus \eqref{RD-eq} is restrictive only when $\kappa\in(0,n]$.

It was proved in \cite{C} that it is always possible,
for each $\nu\in \mathbb{Z}$, to define an almost covering 
$\mathcal{D}_{\nu}$ of open sets, with diameter $\simeq 2^{-\nu}$,
which are called \emph{dyadic cubes} and enjoy
properties very similar to the standard dyadic cubes 
in $\mathbb{R}^{n}$;
see Lemma \ref{Christ'slemma} below for precise definitions
and more details.

In this setting, we consider an operator $L$ on
$L^{2}(\mathcal{X})$ satisfying the following assumption,
where $1_{Q}$ denotes the characteristic function of the
cube $Q$:

\medskip

\noindent \textbf{Assumption (L$_{0}$):} \ \ 
$L$ is a self--adjoint operator on $L^{2}(\mathcal{X})$ 
with $L+M_{0}\ge0$ for some constant $M_{0}\ge0$, 
satisfying the following estimate.
There exist $p_0\in [1,2)$, $m_1, m_2>0$ and $C\ge0$
such that for all $t>0$ and $\nu\in\mathbb{Z}$
with either 
$2^{-\nu}\leq t^{1/m_1}<2^{-\nu+1}$, $0<t<1$ or 
$2^{-\nu}\leq t^{1/m_2}<2^{-\nu+1}$, $t\geq 1$ 
we have
% \begin{equation}\label{eq1-assumption}
%   \sum_{Q\in \mathcal{D}_\nu}\|1_Q e^{-tL}1_{Q'}\|_{p_0\to 2}
%   +
%   \sum_{Q\in \mathcal{D}_\nu}\|1_Q e^{-tL}1_{Q'}\|_{2\to p_0'}
%   \le
%   Ce^{M_{0}t}
%   (t^{\f{n}{m_{1}}}\wedge t^{\f{\kappa}{m_{2}}})^{\f{1}{2}-\f{1}{p_{0}}}
%   % \qquad \forall Q'\in \mathcal{D}_\nu,
% \end{equation}
\begin{equation}\label{eq1-assumption}
  \sum_{Q\in \mathcal{D}_\nu}\|1_Q e^{-tL}1_{Q'}\|_{p_0\to 2}
  +
  \sum_{Q\in \mathcal{D}_\nu}\|1_Q e^{-tL}1_{Q'}\|_{2\to p_0'}
  \le
  Ce^{M_{0}t}
  (t_{>1}^{\f{n}{m_{1}}}+ 
   t_{\le1}^{\f{\kappa}{m_{2}}})^{\f{1}{2}-\f{1}{p_{0}}}
  % \qquad \forall Q'\in \mathcal{D}_\nu,
\end{equation}
for all $Q'\in \mathcal{D}_\nu$, where
$t_{>1}=t\cdot 1_{(1,+\infty)}(t)$ and
$t_{\le1}=t \cdot 1_{(-\infty,1]}(t)$,
and
\begin{equation}\label{eq2-assumption}
\sup_{Q'\in \mathcal{D}_\nu}\sum_{Q\in \mathcal{D}_\nu}
  (1+2^\nu {\rm dist}(Q,Q'))^{N}\|1_Q e^{-tL}1_{Q'}\|_{2\to 2}
  \leq Ce^{M_{0}t}, \qquad
  N=\lfloor n/2\rfloor+1.
\end{equation}

\begin{rem} \label{rem2}
  The previous assumptions include of course
  the typical Gaussian upper estimates for Schr\"{o}dinger operators
  on $\mathbb{R}^{n}$. Indeed,
  in the particular case $m_1=m_2=m$, condition 
  \eqref{eq2-assumption} is a direct consequence of the 
  following estimate
  \begin{equation}\label{Lpqestimates}
    \textstyle
    \|1_{B(x,t^{1/m})}e^{-tL}1_{B(y,t^{1/m})}\|_{p_0\to p_0'}
    \leq Ce^{M_{0}t}\mu(B(x,t^{1/m}))^{-\f{1}{p_0}+\f{1}{p'_0}}
    \exp\Big(-\f{d(x,y)^{m/(m-1)}}{ct^{1/(m-1)}}\Big),
  \end{equation}
  for all $t>0$, and all $x,y\in \mathcal{X}$. Note that the
  presence of an exponentially growing factor $e^{M_{0}t}$
  allows to include some interesting cases like 
  non--positive Schr\"{o}dinger operators $-\Delta+V(x)$,
  see \cite{Simon82-a}.

  However, the converse implication may be false. 
  An example is given by the fractional
  Laplacian $L=(-\Delta)^\alpha, \alpha>0$. It was proved in 
  \cite{DAnconaNicola15-a} 
  that for $\alpha>\lfloor n/2\rfloor+1$, $L$ 
  satisfies \eqref{eq1-assumption} and \eqref{eq2-assumption} 
  for $p_0=1$ and $m=2\alpha$, but not \eqref{Lpqestimates}.
  
  Moreover, the estimate \eqref{Lpqestimates} does not imply the 
  condition \eqref{eq1-assumption}. However, if we assume in 
  addition that $(X,d,\mu)$ satisfies the 
  {\it non-collapsing} condition
  \begin{equation}\label{non-collapsing-app1}
    \mu(B(x,1))\gtrsim 1, \ \ \forall x\in \mathcal{X},
  \end{equation}
  then \eqref{eq1-assumption} is a consequence of 
  \eqref{Lpqestimates}. 
\end{rem}

Then we can prove:

\begin{thm}[]\label{the:first}
  Assume $L$ satisfies \textbf{(L$_{0}$)}.
  Let $p\in[p_{0},p_{0}']$ and
  $s=n\left|\f{1}{2}-\f{1}{p}\right|$.
  Then the estimate
  \begin{equation*}    
    \|e^{-itL}\varphi(L)f\|_{p}\les (1+|t|)^s\|f\|_p,
    \qquad t\in \mathbb{R}
  \end{equation*}
  holds uniformly for $\varphi$ in bounded subsets of
  $\mathscr{S}(\mathbb{R})$.
\end{thm}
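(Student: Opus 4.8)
The natural strategy is to decompose $\varphi(L)$ dyadically in the spectral parameter and combine a frequency‑localized version of the classical estimate \eqref{zero} with the off‑diagonal heat kernel bounds \eqref{eq1-assumption}, \eqref{eq2-assumption}. Write $\varphi(L)=\sum_{k\in\mathbb{Z}}\psi_k(L)$ where $\psi_k(\lambda)=\psi(2^{-k}\lambda)$ for a suitable Littlewood–Paley function $\psi\in C_c^\infty((1/4,4))$ (plus a low‑frequency piece handled separately using semiboundedness; the cutoff $\varphi$ being Schwartz makes the high‑frequency tail summable). Then one is reduced to proving, for each $k$,
\begin{equation*}
\|e^{-itL}\psi_k(L)f\|_p\lesssim \min\bigl(1,(2^k|t|)^{-\infty}\bigr)\,\textrm{-type factors}\cdot(1+|t|)^s\|f\|_p,
\end{equation*}
or more precisely a bound of the shape $\|e^{-itL}\psi_k(L)\|_{p\to p}\lesssim (1+2^{k}|t|)^{s}$ with enough decay in $k$ outside the relevant range to sum, uniformly for $\varphi$ in bounded subsets of $\mathscr{S}(\mathbb{R})$.

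The key step is the single‑scale estimate. For fixed $k$, set $\Phi_k(\lambda)=e^{-it\lambda}\psi_k(\lambda)$ and write $\Phi_k(L)=\Phi_k(L)e^{L/2^{k\beta}}\cdot e^{-L/2^{k\beta}}$ (with $\beta=m_1$ or $m_2$ according to whether the scale $2^{-k/\beta}$ is below or above $1$) so that the heat‑semigroup factor at the matched time $t\sim 2^{-k\beta}$ absorbs the smoothing. The idea is: on the dyadic cubes $Q\in\mathcal{D}_\nu$ at the matched scale $2^{-\nu}\simeq 2^{-k\beta/m_i}$, use \eqref{eq1-assumption} to pass from $L^{p_0}$ to $L^2$ (gaining the volume factor, hence a genuine $L^p$ bound by interpolation for $p\in[p_0,p_0']$), use \eqref{eq2-assumption} to control the interaction between distant cubes with polynomial decay of order $N=\lfloor n/2\rfloor+1$, and on $L^2$ use the spectral theorem together with the scalar bound
\begin{equation*}
\|\widehat{\chi}\,\Phi_k\|_{\textrm{(appropriate norm)}}\lesssim (1+2^k|t|)^{s_2},\qquad s_2=n/4,
\end{equation*}
coming from the stationary‑phase/Bessel analysis behind \eqref{zero}. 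Summing the cube contributions against the off‑diagonal decay (here $N>n/2$ is exactly what makes the sum over $Q$ converge, using the doubling and reverse‑doubling bounds on $\#\{Q:\mathrm{dist}(Q,Q')\simeq 2^{-\nu}\ell\}$) yields the single‑scale $L^p$ bound with the correct power $s$.

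The main obstacle — and where the homogeneous‑space setting bites hardest compared to $\mathbb{R}^n$ — is establishing the single‑scale $L^2\to L^2$ bound with the sharp exponent $s_2=n/4$ while interfacing cleanly with Christ's dyadic cubes: there is no Fourier transform, so the oscillatory factor $e^{-it\lambda}$ must be handled purely through the functional calculus and the finite‑propagation‑speed‑type information encoded in \eqref{eq2-assumption}. Concretely, one must expand $e^{-it\lambda}\psi(2^{-k}\lambda)$ in a way that exhibits, after applying to $L$, kernels localized (up to fast decay) in balls of radius $\simeq \sqrt{|t|}\,2^{k\beta/m_i}$ times the cube scale, and then count cubes and sum geometric/polynomial series; keeping all constants uniform over bounded sets of Schwartz cutoffs, over $t\in\mathbb{R}$, and over the two regimes $t<1$, $t\ge 1$ (with the two different exponents $m_1,m_2$ and the appearance of $\kappa$ versus $n$ in \eqref{eq1-assumption}) is the technically delicate bookkeeping. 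I expect the low‑frequency piece and the $k\to-\infty$ summation to require the reverse doubling condition \eqref{RD-eq}, and the $k\to+\infty$ tail to be controlled by the Schwartz decay of $\varphi$ together with the $t$‑independent heat bound at small times.
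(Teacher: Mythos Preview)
Your plan has a genuine gap at precisely the point you flag as ``the main obstacle.'' You propose to obtain, for each dyadic piece, spatial localization of $e^{-itL}\psi_k(L)$ at scale $\simeq\sqrt{|t|}$ times the cube scale, by ``expanding $e^{-it\lambda}\psi(2^{-k}\lambda)$ in a way that exhibits, after applying to $L$, kernels localized (up to fast decay)\ldots''. But the only spatial information in \textbf{(L$_0$)} is the off--diagonal decay \eqref{eq2-assumption} for the \emph{heat semigroup} $e^{-tL}$; there is no finite propagation speed, no Fourier transform, and no assumed kernel bound for any function of $L$ other than $e^{-tL}$. You do not say how to transfer the heat off--diagonal decay to the oscillatory operator $e^{-itL}\psi_k(L)$, and stationary phase is simply not available in this setting. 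Without that transfer, the cube--summation step cannot be carried out and the argument stalls.

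The paper's proof takes a completely different route, avoiding any dyadic spectral decomposition and any direct off--diagonal estimate for $e^{-itL}$. One first shows (Propositions \ref{pro:weakerL}, \ref{pro:commR}) that \eqref{eq2-assumption} implies uniform $L^2$ bounds for the iterated commutators ${\rm Ad}^k_z(R)$ of the resolvent $R=(L+M_1)^{-1}$ with the distance weight $w(x,y)=d(x,y)$; this reduces \textbf{(L$_0$)} to \textbf{(L)}. The core new ingredient is the algebraic identity of Lemma \ref{lem:basiccom}, which expresses ${\rm Ad}_z(R^{2k}e^{-itL})$ as a finite sum of products of resolvents, Schr\"odinger propagators, and ${\rm Ad}_z(R)$, plus a time integral over $[0,t]$ of the same type of objects. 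Iterating (Lemma \ref{lem:ordlcom}) gives $\|{\rm Ad}^\ell_z(R^{2k}e^{-itL})\|_{2\to2}\lesssim(1+|t|)^\ell$ for $\ell\le k=\lfloor n/2\rfloor+1$, and the abstract Theorem \ref{thm-commutator} (commutator bounds $\Rightarrow$ amalgam $X^{p,2}$ bounds, following Jensen--Nakamura) converts this into $\|R^{2k}e^{-itL}\|_{X^{p_0,2}\to X^{p_0,2}}\lesssim(1+|t|)^{n(1/p_0-1/2)}$. Composition with $R^\beta$ (Proposition \ref{prop-boundednessofRgamma}) and the embedding $X^{p_0,2}\hookrightarrow L^{p_0}$ give the $L^{p_0}$ bound; the Schwartz cutoff $\varphi$ enters only at the very end, to absorb the extra resolvent powers via Lemma \ref{lem:phiL2}. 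No Littlewood--Paley decomposition, no single--scale estimate, and no kernel localization for the group are needed.
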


\begin{rem}[]\label{rem:weightedsob}
  The previous result is still valid for functions
  $\varphi$ of Sobolev regularity. 
  More precisely, the estimate is true and
  uniform in $\varphi$ provided the following norm
  \begin{equation}\label{eq:weightedsob0}
    \sum_{j\le n+1}
    \|\bra{\lambda}^{2+n+j+n/m_{1}}\varphi^{(j)}(\lambda)\|_{L^{2}}.
  \end{equation}
  remains bounded. This condition is not sharp; see
  Remark \ref{rem:constant} for further details.
\end{rem}

We now examine a few directions in which one can relax the
assumptions of Theorem \ref{the:first}.
In order to do this we introduce some definitions.
The \emph{amalgam space} $X^{1,p}_{\nu}$,
with $1\le p\le \infty$ and $\nu\in \mathbb{Z}$, is 
the space of measurable functions on $\mathcal{X}$
such that the following norm is finite:
\begin{equation}\label{eq:amalgam}
  \|f\|_{X_{\nu}^{1,p}}:=\sum_{Q\in \mathcal{D}_{\nu}}
  \|f\|_{L^{p}(Q)}.
\end{equation}
Moreover, we say that
$w:\mathcal{X}\times \mathcal{X}\to \mathbb{R}$ 
is a \emph{weight function}
if it is equivalent to the distance function, in the sense that
\begin{equation}\label{eq:weight}
  \textstyle
  % \frac{1}{K_{0}}
  K_{0}^{-1}d(x,y)\le |w(x,y)|\le K_{0}d(x,y)
\end{equation}
for some constant $K_{0}>0$.

Let $w$ be a weight function and let $\mathscr D(w)$ be any topological vector space associated to $w$ satisfying the following conditions:
\begin{enumerate}[(i)]
	\item $\mathscr D(w)$ is dense in $L^2(X)$ (w.r.t. the $L^2(X)$ norm);
	\item $w(x,\cdot)^N f\in \mathscr D(w)$ for all $f\in \mathscr D(w)$, $x\in \mathcal{X}$ and $N\in \mathbb{N}$. 
\end{enumerate}
Denote by $\mathscr D'(w)$ the dual space of $\mathscr D(w)$. 

\begin{rem}
	In applications, in Subsections 3.1--3.10 we will choose $w(x,y)=d(x,y)$ and $\mathscr D(w)=L^2_c(X)$ which is a space of all functions in $L^2$ with compact support. In Subsection \ref{Subsec11}, as $X= \mathbb{R}^n$ we will choose $\mathscr D(w)=C_0^\vc(\mathbb{R}^n)$. We will do not recall this in Section 3.
\end{rem}

Denoting by $w_{x}$ the multiplication operator by
the function $w(x,\cdot)$, 
the commutators ${\rm Ad}_{x}^{k+1}(T): \mathscr D_X \to \mathscr D'_X$ of order $k$ of an $L^2(X)$-bounded linear operator $T$ with the weight $w$ are defined as follows:
\begin{equation*}
  {\rm Ad}_{x}^{0}(T)=I,
  \qquad
  {\rm Ad}_{x}^{1}(T)=[w_{x},T],
  \qquad
  {\rm Ad}_{x}^{k+1}(T)=[w_{x},{\rm Ad}_{x}^{k}(T)].
\end{equation*}
In view of the applications, we shall also consider a more general
kind of vector valued weight functions
$w=(w_{1},\dots,w_{\ell}):
  \mathcal{X}\times \mathcal{X}\to \mathbb{R}^{\ell}$,
defined again by condition \eqref{eq:weight} 
(where now $|w|=(w_{1}^{2}+\dots+w_{\ell}^{2})^{1/2}$).
In the vector valued case
${\rm Ad}_{x}^{k}(T)$ will denote the $\ell$--tuple of commutators
with $w_{1},\dots,w_{\ell}$, that is to say we define for
$j=1,\dots,\ell$
\begin{equation*}
  {\rm Ad}_{j,x}^{0}(T)=I,
  \qquad
  {\rm Ad}_{j,x}^{1}(T)=[w_{j,x},T],
  \qquad
  {\rm Ad}_{j,x}^{k+1}(T)=[w_{j,x},{\rm Ad}_{x}^{k}(T)].
\end{equation*}
(where $w_{j,x}$ is multiplication by $w_{j}(x,\cdot)$)
and 
${\rm Ad}_{x}^{k}(T):
  =({\rm Ad}_{1,x}^{k}(T),\dots,{\rm Ad}_{\ell,x}^{k}(T))$.
Note that the simplest choice of a weight satisfying
\eqref{eq:weight} is given by
the distance function itself, with $\ell=1$.

We can now state our second set of assumptions on $L$:
\medskip

\noindent \textbf{Assumption (L):} \ \ 
$L$ is a self--adjoint operator on
$L^{2}(\mathcal{X})$ with $L+M_{0}\ge0$ for some 
constant $M_{0}\ge0$,
satisfying the following estimates. 
There exist $p_0\in [1,2)$, $m_1, m_2>0$ and 
$C\ge0$ such that 
for all $t>0$ and $\nu\in\mathbb{Z}$ 
with either 
$2^{-\nu}\leq t^{1/m_1}<2^{-\nu+1}$, $0<t<1$ or 
$2^{-\nu}\leq t^{1/m_2}<2^{-\nu+1}$, $t\geq 1$ 
we have
\begin{equation}\label{eq1-assumption-0}
  \|e^{-tL}\|_{X_{\nu}^{1,p_{0}}\to X_{\nu}^{1,2}}
  +
  \|e^{-tL}\|_{X_{\nu}^{1,2}\to X_{\nu}^{1,p_{0}'}}
  \le
  Ce^{M_{0}t}
  (t_{>1}^{\f{n}{m_{1}}}+
    t_{\le1}^{\f{\kappa}{m_{2}}})^{\f{1}{2}-\f{1}{p_{0}}}
  % \Bigl(
  % \textcolor{red}{
  % 2^{\nu \kappa(\f{1}{p_0}-\f{1}{2})}
  % }
  % +
  % 2^{\nu n(\f{1}{p_0}-\f{1}{2})}\Bigr).
\end{equation}
 where
$t_{>1}=t\cdot 1_{(1,+\infty)}(t)$ and
$t_{\le1}=t \cdot 1_{(-\infty,1]}(t)$.
Moreover, there exists a weight function $w(x,y)$ 
and a constant $M_{1}>M_{0}$ such that
the resolvent $R(z)=(L+z)^{-1}$ satisfies,
for all $x\in \mathcal{X}$,
\begin{equation}\label{eq2-assumption-0}
  \|{\rm Ad}^{k}_{x}(R(M_{1}))\|_{2\to2}\le C
  \qquad
  0\le k\le\lfloor n/2\rfloor+1.
\end{equation}

% \begin{rem}[Uniform version of \eqref{eq2-assumption-0}]
%   In some of our results we shall require a stronger
%   version of \eqref{eq2-assumption-0}, namely
%   \begin{equation}\tag{\ref{eq2-assumption-0}'}
%     \forall M>M_{0}:
%     \quad
%     \|{\rm Ad}^{k}_{x}(R(M))\|_{2\to2}\le
%     C(M-M_{0})^{-1}
%     \qquad
%     0\le k\le\lfloor n/2\rfloor+1,\quad x\in \mathcal{X}.
%   \end{equation}
%   In other words, we require that condition
%   \eqref{eq2-assumption-0} is satisfied \emph{for all}
%   $M>M_{0}$, with $C$ independent of $M$.
% \end{rem}

\begin{rem}[]\label{rem:commuthyp}
  The reason why condition \eqref{eq2-assumption-0} is
  interesting, besides being much weaker than
  \eqref{eq2-assumption}, is that it is very easy
  to check directly for differential operators, and even some
  pseudodifferential ones, in the euclidean setting. See Subsection \ref{Subsec11}.
\end{rem}

Then we can prove:

\begin{thm}[]\label{the:second}
  Assume $L$ satisfies \textbf{(L)}.
  Let $p\in [p_0,p_0']$  and let $s=n\Big|\f{1}{2}-\f{1}{p}\Big|$.
  Then the estimate
  \begin{equation*}    
    \|e^{-itL}\varphi(L)f\|_{p}\les 
    (1+|t|)^s\|f\|_p,
    \qquad t\in \mathbb{R}
  \end{equation*}
  holds uniformly for $\varphi$ in bounded subsets of
  $\mathscr{S}(\mathbb{R})$.
\end{thm}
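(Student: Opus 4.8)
The plan is to reduce Theorem \ref{the:second} to Theorem \ref{the:first} by showing that \textbf{(L)} implies \textbf{(L$_0$)}, or more precisely that the commutator hypothesis \eqref{eq2-assumption-0} on the resolvent, combined with the smoothing estimate \eqref{eq1-assumption-0}, is enough to run the same argument that proves Theorem \ref{the:first}. The only place where the stronger off-diagonal hypothesis \eqref{eq2-assumption} of \textbf{(L$_0$)} is genuinely used (as opposed to \eqref{eq1-assumption}, which is essentially \eqref{eq1-assumption-0} rephrased via amalgam spaces) is to control the $L^2\to L^2$ dyadic off-diagonal behavior of $e^{-tL}$ and of the building blocks $\psi(2^{-\nu}\sqrt{L})$, $e^{-itL}\varphi(L)$ cut between distant cubes. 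So the heart of the matter is: derive a quantitative $L^2\to L^2$ off-diagonal decay, of the form $\|1_Q e^{-tL} 1_{Q'}\|_{2\to2}\lesssim e^{M_0 t}(1+2^\nu\,{\rm dist}(Q,Q'))^{-N}$ with $N=\lfloor n/2\rfloor+1$, from the single resolvent bound \eqref{eq2-assumption-0}.

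First I would establish the commutator-to-off-diagonal mechanism at the level of the resolvent. From \eqref{eq2-assumption-0} one gets, for $x$ fixed and $0\le k\le N$, that ${\rm Ad}^k_x(R(M_1))$ is $L^2$-bounded; by the algebraic Leibniz-type identities for iterated commutators this yields bounds on ${\rm Ad}^k_x(R(M_1)^j)$ for each $j\ge1$, hence on ${\rm Ad}^k_x(g(L))$ whenever $g$ is a nice function of $L$ expressed through resolvent powers (e.g. via the Cauchy/Hadamard functional calculus, or by writing $g(L)=\int (L+z)^{-1}\,d\mu_g(z)$ with suitable measure/contour). The key point is the standard localization lemma: if $T$ is $L^2$-bounded and ${\rm Ad}^k_x(T)$ is $L^2$-bounded for all $k\le N$ with $x$-uniform constants, then for cubes $Q,Q'\in\mathcal D_\nu$ one has
\begin{equation*}
  \|1_Q T 1_{Q'}\|_{2\to2}\lesssim (1+2^\nu\,{\rm dist}(Q,Q'))^{-N}\sup_{0\le k\le N}\|{\rm Ad}^{k}_{x_Q}(T)\|_{2\to2},
\end{equation*}
because multiplying by $1_{Q'}$ on the right produces factors $w(x_Q,y)^{-k}$ which are comparable to ${\rm dist}(Q,Q')^{-k}$ there, up to constants coming from \eqref{eq:weight} and the diameter $\simeq2^{-\nu}$ of the cubes; in the vector-valued case one combines the $\ell$ components. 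Applying this with $T=e^{-tL}$ requires propagating the commutator bound from $R(M_1)$ to the heat semigroup: I would write $e^{-tL}=e^{-M_1 t}e^{-t(L+M_1)}$ and use that $e^{-t(L+M_1)}$ is, for each fixed $t$, given by a contour integral of $R(z)$ over a region where $\|R(z)\|$ and its commutators are controlled — or, more robustly, interpolate/iterate: $[w_x, e^{-tL}]$ solves an ODE driven by $[w_x,L]$, but since $L$ itself need not be nice we instead use the resolvent representation $e^{-tL}=\frac{1}{2\pi i}\int_\Gamma e^{-tz}(z-L)^{-1}dz$ and differentiate under the integral, picking up ${\rm Ad}^k_x((z-L)^{-1})$, which are estimated by shifting to $R(M_1)$ via the second resolvent identity.

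After this, the remaining argument is essentially a transcription of the proof of Theorem \ref{the:first}: Littlewood--Paley decompose $\varphi(L)=\sum_\nu \varphi_\nu(L)$ with $\varphi_\nu$ supported at scale $2^{\nu}$ (so that the relevant heat time is $t\simeq 2^{-\nu m_i}$), use \eqref{eq1-assumption-0} to get the amalgam $X^{1,p_0}_\nu\to X^{1,2}_\nu$ and $X^{1,2}_\nu\to X^{1,p_0'}_\nu$ mapping with the stated power of $t$, use the off-diagonal $L^2\to L^2$ bound just derived to sum the contributions of distant cubes (the exponent $N=\lfloor n/2\rfloor+1 > n$ makes the geometric series over cubes at distance $\sim 2^j$ converge, using doubling), and combine with the elementary one-dimensional estimate \eqref{zero}-type bound $\|e^{-it\lambda}\widetilde\varphi(2^{-\nu m}\lambda)\|$-type kernel estimate in the spectral variable to produce the $(1+|t|)^s$ growth after summing over $\nu$; finally interpolate between $p=p_0$, $p=2$, $p=p_0'$, and use the uniformity in $\varphi\in\mathscr S(\mathbb R)$ exactly as in Remark \ref{rem:weightedsob}. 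I expect the main obstacle to be precisely the propagation of the commutator bound from the single resolvent $R(M_1)$ to $e^{-tL}$ (and to the spectrally localized pieces $e^{-itL}\varphi_\nu(L)$) with constants that grow at most like $e^{M_0 t}$ and are uniform in $x$ and in $\nu$; controlling the contour integrals so that the $t$-dependence stays exactly $e^{M_0t}$ — rather than, say, $e^{M_1 t}$ or polynomially worse — is the delicate point, and it is here that the gap $M_1>M_0$ in \eqref{eq2-assumption-0} and the semiboundedness $L+M_0\ge0$ must be exploited carefully.
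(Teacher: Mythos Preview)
Your plan has a genuine gap, and the paper's argument proceeds quite differently.

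The central step of your proposal is to recover the multi-scale off-diagonal estimate \eqref{eq2-assumption}, namely
\[
\|1_Q e^{-tL} 1_{Q'}\|_{2\to2}\lesssim e^{M_0 t}(1+2^\nu\,{\rm dist}(Q,Q'))^{-N},
\]
valid for \emph{all} $\nu\in\mathbb{Z}$ (with $t$ tied to $\nu$), from the single resolvent bound \eqref{eq2-assumption-0} at one fixed $M_1$. This cannot work: a bound on $\|{\rm Ad}^k_x(R(M_1))\|_{2\to2}$ at a single point carries no scaling information, so at best you obtain off-diagonal decay at one fixed scale (roughly $\nu=0$), not the full family in $\nu$. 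Your contour-integral argument for $e^{-tL}$ would yield $\|{\rm Ad}^k_x(e^{-tL})\|_{2\to2}\lesssim C_k e^{M_1 t}$ with no $2^{-k\nu}$ gain, which is far from \eqref{eq2-assumption}. (Getting the scale-dependent bound would require \eqref{eq2-assumption-1}, i.e.\ Assumption \textbf{(L$_1$)}, not \textbf{(L)}.) Relatedly, in the paper the logical flow is the reverse of what you assume: Theorem~\ref{the:first} is deduced \emph{from} Theorem~\ref{the:second}, via the implication $\textbf{(L$_0$)}\Rightarrow\textbf{(L)}$ established in Propositions~\ref{pro:weakerL} and~\ref{pro:commR}. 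So reducing Theorem~\ref{the:second} to Theorem~\ref{the:first} would be circular.

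The paper bypasses the heat kernel entirely and never uses a Littlewood--Paley decomposition in $\nu$. Instead it works at the single scale $\nu=0$ throughout. The key device is Theorem~\ref{thm-commutator}: a uniform bound $\|{\rm Ad}^k_z(T)\|_{2\to2}\le B_0^k$ for $k\le N$ yields directly $\|T\|_{X^{p,2}\to X^{p,2}}\lesssim B_0^{\,n(1/p-1/2)}$. One then computes the commutators of $T=R^{2k}e^{-itL}$ \emph{directly}, via an explicit algebraic identity (Lemma~\ref{lem:basiccom}) whose essential term is
\[
i\int_0^t e^{-isL}R^{k-1}{\rm Ad}_z(R)R^{k-1}e^{i(s-t)L}\,ds,
\]
from which $\|{\rm Ad}^\ell_z(R^{2k}e^{-itL})\|_{2\to2}\lesssim(1+|t|)^\ell$ follows by induction (Lemma~\ref{lem:ordlcom}); the $(1+|t|)^s$ growth comes from the length of this integral, not from summing over dyadic scales. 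Theorem~\ref{thm-commutator} then gives the $X^{p_0,2}\to X^{p_0,2}$ bound, and the smoothing hypothesis \eqref{eq1-assumption-0} (via Proposition~\ref{prop-boundednessofRgamma}) supplies the extra factor $R^\beta$ needed to pass from $X^{p_0,2}$ to $L^{p_0}$. The cutoff $\varphi(L)$ is handled by writing it as $\rho(R)$ for a suitable $\rho$ and using the Fourier inversion $\rho(R)=\int e^{i\xi R}\widehat\rho(\xi)\,d\xi$ together with the same commutator machinery (Lemmas~\ref{lem:eixiR}--\ref{lem:phiL2}); no spectral localization in frequency bands is needed.
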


\begin{rem}[] \label{rem:LL}
  Comparing the two sets of assumptions we see that
  \begin{equation*}
    \ \text{Assumption \textbf{(L$_{0}$)}}\ 
    \implies
    \ \text{Assumption \textbf{(L)}.}\ 
  \end{equation*}
  Indeed, the implication
  \begin{equation*}
    \text{condition \eqref{eq1-assumption} $\implies$ 
    condition \eqref{eq1-assumption-0}}
  \end{equation*}
  is obviously true. On the other hand, one has
  \begin{equation*}
    \text{condition \eqref{eq2-assumption} $\implies$
    condition \eqref{eq2-assumption-0},
    with $w(x,y)=d(x,y)$,}
  \end{equation*}
  but this is more delicate and
  will be proved in Propositions \ref{pro:weakerL} 
  and \ref{pro:commR} below. 
\end{rem}

One notices that estimate \eqref{zero} for the standard Laplacian
is uniform also for
rescaling in frequency $\sim 2^{k}$, $k\in \mathbb{Z}$.
This is a direct consequence of the scaling properties
of $\mathbb{R}^{n}$ and its Lebesgue
measure, which are not available on a general metric measure
space $\mathcal{X}$. Uniformity in frequency is an important
property, especially useful when doing dyadic analysis on
Sobolev or Besov spaces generated by the operator $L$.
We can recover uniformity under slightly stronger assumptions on 
the operator $L$:

\medskip

\noindent \textbf{Assumption (L$_{1}$):} \ \ 
$L$ is a self--adjoint operator on
$L^{2}(\mathcal{X})$, with
$L+M_{0}\ge0$ for some constant $M_{0}\ge0$,
satisfying condition \eqref{eq1-assumption-0}
with $m_{1}=m_{2}=m>0$.
% and $M_{0}=0$.
Moreover, there exists a weight function $w(x,y)$ such that
the resolvent $R(z)=(L+z)^{-1}$ satisfies, for all
$0\le j\le\lfloor n/2\rfloor+1$,
\begin{equation}\label{eq2-assumption-1}
  \|{\rm Ad}^{j}_{x}(R(M))\|_{2\to2}\le
  C(M-M_{0})^{-1-\f{j}{m}},
  \quad
  \forall
  M>M_{0}, \ 
  x\in \mathcal{X}.
\end{equation}

\begin{rem}[]\label{rem:}
  Note that when $m_{1}=m_{2}$ the following implication holds:
  \begin{equation*}
    \ \text{Assumption \textbf{(L$_{0}$)}}\ 
    \implies
    \ \text{Assumption \textbf{(L$_{1}$)} (with $w(x,y)=d(x,y)$)}\ 
  \end{equation*}
  (compare with Remark \ref{rem:LL}).
  This is proved in Propositions \ref{pro:weakerL}
  and \ref{pro:commR} below.
\end{rem}

Under this assumption we can prove:

\begin{thm}[]\label{the:third}
  Assume 
  $L$ satisfies Assumption \textbf{(L$_{1}$)}. Let $p\in [p_0,p_0']$  and let $s=n\Big|\f{1}{2}-\f{1}{p}\Big|$.
  Then we have
  \begin{equation*}
    \|e^{-itL}\varphi(\theta L)f\|_{p}\les (1+\theta|t|)^s\|f\|_p,
    \qquad t\in \mathbb{R},
  \end{equation*}
  and the estimate is uniform for $\varphi$ in bounded subsets
  of $\mathscr{S}(\mathbb{R})$ and $\theta$ in bounded subsets of
  $(0,+\infty)$. In the special case when
  $\kappa=n$ and $M_{0}=0$
  the estimate is uniform for all $\theta>0$.
\end{thm}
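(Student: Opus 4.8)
The plan is to derive Theorem~\ref{the:third} from Theorem~\ref{the:second} (or rather its proof) by a rescaling argument in the \emph{spectral} variable, since genuine rescaling of the space $\mathcal{X}$ is not available. The key observation is that under Assumption~\textbf{(L$_1$)} the estimate \eqref{eq1-assumption-0} holds with a single exponent $m_1=m_2=m$, so the inhomogeneous scaling in $t$ disappears and the heat kernel bound becomes genuinely homogeneous of degree $\frac{n}{m}(\frac12-\frac1{p_0})$ in the relevant range; similarly the resolvent bound \eqref{eq2-assumption-1} is exactly the scale-invariant version of \eqref{eq2-assumption-0}. Concretely, I would set $L_\theta=\theta L$ and check that $L_\theta$ satisfies the hypotheses needed to run the argument of Theorem~\ref{the:second} with constants \emph{independent of $\theta$} in the stated ranges: for the heat part, $e^{-s L_\theta}=e^{-\theta s\,L}$, and the dyadic scale $\nu$ attached to $s$ for $L_\theta$ corresponds to the scale attached to $\theta s$ for $L$, so \eqref{eq1-assumption-0} transfers with the same constant once one tracks that $(\theta s)^{n/m}=\theta^{n/m}s^{n/m}$ can be absorbed uniformly for $\theta$ bounded; for the commutator part, ${\rm Ad}^k_x(R_{L_\theta}(M'))={\rm Ad}^k_x((\theta L+M')^{-1})=\theta^{-1}{\rm Ad}^k_x((L+M'/\theta)^{-1})$, and \eqref{eq2-assumption-1} with $M=M'/\theta$ gives exactly the bound $C(M'-\theta M_0)^{-1-j/m}$ — scale-invariant and uniform for $\theta$ in a bounded set (and for \emph{all} $\theta>0$ when $M_0=0$).

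Next I would carry out the spectral rescaling on the Schr\"odinger group itself. Writing $e^{-itL}\varphi(\theta L)=e^{-i(t/\theta)(\theta L)}\varphi(\theta L)$ and putting $\tilde t=t/\theta$, $\tilde L=\theta L$, the quantity to be bounded becomes $\|e^{-i\tilde t\tilde L}\varphi(\tilde L)f\|_p$. Applying the $\theta$-uniform version of Theorem~\ref{the:second} to $\tilde L$ yields $\lesssim(1+|\tilde t|)^s\|f\|_p=(1+\theta^{-1}|t|)^s\|f\|_p$, which is the claimed bound (the statement in the excerpt writes $(1+\theta|t|)^s$, matching the convention $\varphi(\theta L)$ versus $\varphi(\theta^{-1}L)$; in either case the argument is the same modulo replacing $\theta$ by $\theta^{-1}$). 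The uniformity in $\varphi$ over bounded subsets of $\mathscr{S}(\mathbb{R})$ is inherited verbatim from Theorem~\ref{the:second}, since $\varphi$ is unchanged by the rescaling.

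The main obstacle, and the step requiring care, is verifying that \emph{every} constant in the proof of Theorem~\ref{the:second}, when applied to $\tilde L=\theta L$, is controlled uniformly in $\theta$ — not merely the two explicit hypotheses \eqref{eq1-assumption-0} and \eqref{eq2-assumption-1}. In particular one must track the constant $M_0$: for $\tilde L$ the semiboundedness constant is $\theta M_0$, which grows if $\theta$ is large, so the factors $e^{M_0 t}$ appearing throughout force one to restrict to bounded $\theta$ unless $M_0=0$. This is precisely why the final sentence of the theorem singles out the case $M_0=0$ (and $\kappa=n$, which makes the reverse doubling exponent match $n$ so that the small-time and large-time regimes in \eqref{eq1-assumption-0} coincide and no spurious $\theta$-dependence enters through the $t_{\le1}^{\kappa/m}$ versus $t_{>1}^{n/m}$ dichotomy). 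So the plan is: (i) record the scaling identities for the heat semigroup and resolvent of $\theta L$; (ii) observe that Assumption~\textbf{(L$_1$)} for $L$ implies Assumption~\textbf{(L)} for $\theta L$ with constants uniform in $\theta$ on bounded sets, and uniform in \emph{all} $\theta>0$ when $M_0=0$, $\kappa=n$; (iii) invoke Theorem~\ref{the:second} for $\theta L$ and undo the rescaling in $t$. The delicate bookkeeping is confined to step (ii) and amounts to re-reading the proof of Theorem~\ref{the:second} with the dictionary $t\mapsto\theta t$, $\nu\mapsto\nu+\lfloor\log_2\theta^{1/m}\rfloor$ kept in mind.
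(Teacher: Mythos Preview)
Your plan is close to the paper's, but step (ii) as you state it does not go through, and the reason is precisely the sentence you wrote at the beginning: ``genuine rescaling of the space $\mathcal{X}$ is not available''. In fact it \emph{is} available---one may multiply $d$ and $\mu$ by constants---and the paper uses it. The issue is this: Assumption~\textbf{(L)} is not a statement about $L$ alone, it is a statement about $L$ \emph{relative to the dyadic scales of $(\mathcal{X},d,\mu)$}. Condition~\eqref{eq1-assumption-0} demands, for each $t$, a bound on $e^{-tL}$ acting on the amalgam space $X_\nu^{1,p_0}$ where $\nu$ is fixed by $2^{-\nu}\sim t^{1/m}$. If you set $\tilde L=\theta L$ on the \emph{unchanged} space and try to verify \eqref{eq1-assumption-0} for $\tilde L$ at time $s$, you need a bound on $\|e^{-\theta s L}\|_{X_\nu^{1,p_0}\to X_\nu^{1,2}}$ with $2^{-\nu}\sim s^{1/m}$; but the hypothesis on $L$ at time $\theta s$ gives information on $X_{\nu+\gamma}^{1,p_0}\to X_{\nu+\gamma}^{1,2}$ (your own dictionary), and there is no general inequality relating operator norms on $X^{1,p_0}_\nu$ and $X^{1,p_0}_{\nu+\gamma}$. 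So ``Assumption~\textbf{(L$_1$)} for $L$ implies Assumption~\textbf{(L)} for $\theta L$'' is false as written.

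The paper's remedy is to rescale the geometry along with the operator: set $\overline d=2^\gamma d$, $\overline\mu=2^{n\gamma}\mu$, and $\overline w=2^\gamma w$. Then $\overline{\mathcal D}_\nu=\mathcal D_{\nu+\gamma}$ and $\|f\|_{\overline X_\nu^{p,q}}=2^{n\gamma/q}\|f\|_{X_{\nu+\gamma}^{p,q}}$, so the scale shift is absorbed into the new amalgam structure, and one can legitimately check that $\overline L=\theta L$ satisfies \textbf{(L)} on $(\overline{\mathcal X},\overline d,\overline\mu)$ with uniform constants. The weight rescaling is equally essential: your resolvent computation actually gives $\|{\rm Ad}^k_x((\theta L+M')^{-1})\|_{2\to2}\le C\,\theta^{k/m}(M'-\theta M_0)^{-1-k/m}$, not $C(M'-\theta M_0)^{-1-k/m}$---you dropped the factor $\theta^{k/m}$. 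For $\theta$ bounded this is harmless, but for the case $M_0=0$, $\kappa=n$ where uniformity is claimed for \emph{all} $\theta>0$, this factor must be cancelled, and it is precisely the rescaled weight $\overline w=2^\gamma w$ that supplies the compensating $2^{k\gamma}=\theta^{-k/m}$. With this geometric rescaling in place, your steps (i)--(iii) become exactly the paper's proof.
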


\begin{rem}[]
  Like for Theorems \ref{the:first} and \ref{the:second},
  the previous estimate is valid and uniform in the
  more general case of functions $\varphi$ varying in
  any bounded subset for the weighted Sobolev
  norm \eqref{eq:weightedsob0}.
\end{rem}

As an intermediate step in the proof of the previous
Theorems, we obtain uniform
$L^{p}$ estimates for operators of the form $\varphi(L)$
which are of independent interest, see
Theorem \ref{the:phiLscal}. (This result can be
recovered from the statement of
Theorem \ref{the:third} choosing $t=0$).

Our results are based on a commutator argument and
a reduction to amalgam spaces, following the methods of 
Jensen--Nakamura \cite{JensenNakamura95-a}.
The adaptation of the argument from \cite{JensenNakamura95-a}
to a multi--scale setting was introduced in
\cite{DAnconaNicola15-a} and was inspired by
the ideas of \cite{Tao99-a}. Moreover, our approach can be adapted to study the $L^p$-boundedness for Schr\"odinger group on an open subset of the space of homoegeneous type $\mathcal{X}$. 

We finally consider a self--adjoint operator $L$ on $L^{2}(\Omega)$, where $\Omega$ is an open subset of $\mathcal{X}$. This case
can not be reduced to the previous results since
$\Omega$ may not satisfy the doubling condition. 
However, if we assume that $L+M_{0}\ge0$ for some 
$M_{0}\ge0$ and the kernel $p_t(x,y)$ of heat semigroup $e^{-tL}$ satisfies the following estimate: $\exists C\ge0$, $m>1$ such that
\begin{equation}\label{Assumption-Subdomain}
|p_t(x,y)|\leq \f{Ce^{M_0t}}{\mu(B(x,t{^{1/m}}))}\exp\Big(-\f{d(x,y)^{m/(m-1)}}{ct^{1/(m-1)}}\Big)
\end{equation}
for all $t>0$ and $x,y\in \Omega$, then we can prove:

\begin{thm}[]\label{the:fourth}
	Let $L$ be a  nonnegative self--adjoint operator on $L^2(\Omega)$, where $\Omega$ is an open subset of $\mathcal{X}$. Assume that $L$ satisfies \eqref{Assumption-Subdomain}. Let $p\in [1,\vc]$  and let $s=n\Big|\f{1}{2}-\f{1}{p}\Big|$. Then we have
	\begin{equation*}
	\|e^{-itL}\varphi(\theta L)f\|_{L^p(\Omega)}\les (1+\theta|t|)^s\|f\|_{L^p(\Omega)},
	\qquad t\in \mathbb{R},
	\end{equation*}
	and the estimate is uniform for $\varphi$ in bounded subsets
	of $\mathscr{S}(\mathbb{R})$ and $\theta$ in bounded subsets of
  $(0,+\infty)$. In the special case
	$\kappa=n$ and $M_0=0$
	the estimate is uniform for all $\theta>0$.
\end{thm}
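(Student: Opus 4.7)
The plan is to follow the strategy of Theorem \ref{the:third} but carried out directly on $\Omega$, using the full Gaussian upper bound \eqref{Assumption-Subdomain} in place of the abstract hypothesis \textbf{(L$_1$)}. A literal reduction to Theorem \ref{the:third} is not possible because $\Omega$ endowed with the restriction of $\mu$ may fail to be doubling and there is no natural self--adjoint extension of $L$ to $L^{2}(\mathcal{X})$; however, \eqref{Assumption-Subdomain} is strong enough that all the ingredients of the Jensen--Nakamura argument can be verified by direct computation.

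The first step is to identify $L^{p}(\Omega)$ with the subspace of $L^{p}(\mathcal{X})$ consisting of functions vanishing outside $\Omega$, and to view the kernel $p_{t}(x,y)$ as defined on $\mathcal{X}\times\mathcal{X}$ with value zero off $\Omega\times\Omega$. With this convention the Gaussian bound is a pointwise bound on the full ambient space, and it implies the amalgam smoothing estimate \eqref{eq1-assumption-0} with $p_{0}=1$ and $m_{1}=m_{2}=m$: for a dyadic cube $Q'\subset\mathcal{X}$ of scale $\simeq t^{1/m}$, one integrates the Gaussian kernel against $1_{Q'}$, using the doubling of $\mu$ on $\mathcal{X}$ to control $\mu(B(x,t^{1/m}))^{-1}$ uniformly on cubes $Q$ at fixed distance from $Q'$, and then sums over $Q$ via the exponential spatial decay.

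The second step is to verify the commutator estimate \eqref{eq2-assumption-1} with weight $w(x,y)=d(x,y)$. Writing $R(M)=\int_{0}^{\infty}e^{-Ms}e^{-sL}\,ds$, one expands the iterated commutator $\mathrm{Ad}_{x}^{j}(e^{-sL})$; its integral kernel is, up to combinatorial factors, $w(x,y)^{j}p_{s}(x,y)$. The Gaussian bound dominates $d(x,y)^{j}|p_{s}(x,y)|$ by a function whose $L^{\infty}_{x}L^{1}_{y}$ and $L^{\infty}_{y}L^{1}_{x}$ norms are both $\les s^{j/m}e^{M_{0}s}$, by the standard moment estimate for Gaussians, so Schur's test gives $\|\mathrm{Ad}_{x}^{j}(e^{-sL})\|_{2\to 2}\les s^{j/m}e^{M_{0}s}$. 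Integration in $s$ against $e^{-Ms}$ then yields the claimed bound $(M-M_{0})^{-1-j/m}$.

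With these two ingredients one re-runs the proof of Theorem \ref{the:third} verbatim, reading every norm as an $L^{p}(\Omega)$ norm and every spectral multiplier $\varphi(\theta L)$ as an operator on $L^{p}(\Omega)$: the multi--scale commutator argument and the Fourier--synthesis decomposition of $e^{-itL}\varphi(\theta L)$ only manipulate the heat semigroup of $L$ and its resolvent, both of which we now have under full control. The main obstacle is the bookkeeping required to check that no step of the proof secretly invokes doubling of $\Omega$ rather than of $\mathcal{X}$, and that the spectral calculus for $L$ on $L^{2}(\Omega)$ suffices to support the representation $\varphi(\theta L)=c\int\widehat{\varphi}(\tau)e^{-i\tau\theta L}\,d\tau$ used in the decomposition. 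The uniformity in $\theta>0$ when $\kappa=n$ and $M_{0}=0$ follows from the scale invariance of the resulting bounds, exactly as in Theorem \ref{the:third}.
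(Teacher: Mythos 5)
Your proposal is correct and follows essentially the same route as the paper: extend the kernel by zero to $\mathcal{X}\times\mathcal{X}$ (i.e.\ work with $1_\Omega e^{-tL}1_\Omega$), verify the amalgam smoothing estimate and the resolvent commutator bounds directly from the Gaussian bound \eqref{Assumption-Subdomain} using the doubling of $\mathcal{X}$ rather than of $\Omega$, and then rerun the machinery of Theorem \ref{the:third}. The only cosmetic difference is that you obtain the heat--semigroup commutator bound by a pointwise Schur test on the kernel $|d(z,\cdot)-d(z,\cdot)|^{j}p_{s}$, whereas the paper routes it through the amalgam argument of Proposition \ref{pro:weakerL}; both are valid given the full Gaussian hypothesis.
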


The proofs of the Theorems,
and some additional estimates, are given in the next section.
The third, and final, section of the paper
is devoted to an extensive list of applications: we consider
Laplace--Beltrami operators on Riemannian manifolds
with or without Gaussian heat kernel bounds;
the operator associated to the Sierpinski gasket;
H\"{o}rmander type operators generated by vector fields on
homogeneous groups;
Bessel operators;
Schr\"{o}dinger operators with potentials on manifolds;
euclidean Schr\"{o}dinger operators with singular potentials of
inverse square type; 
the sub-Laplacian on Heisenberg groups;
and Dirichlet Laplacian on open connected domains.
The list is not exhaustive and is intended to
show the variety of possible
applications and the generality of Assumption \textbf{(L)}.

\section{Proof of the theorems}\label{sec:pre}

With the notation $V(x,r)=\mu(B(x,r))$,
the doubling property (\ref{doublingpro}) 
implies the existence of $C>0$ and $n>0$ such that
\begin{equation}\label{doublingpro1}
  V(x,\lambda r)\leq C\lambda^n V(x,r),
  \qquad
  \forall\lambda>0,\ x\in \mathcal{X},
\end{equation}
and
\begin{equation}\label{doublingpro2}
  V(x,r)\leq C\Big(1+\frac{d(x,y)}{r}\Big)^n V(y,r),
  \qquad
  \forall r>0,\ x,y\in \mathcal{X}.
\end{equation}
As a consequence of \eqref{doublingpro2}, 
we have $V(x,r)\simeq V(y,r)$ when $d(x,y)\leq r$.

We recall the fundamental covering lemma from \cite{C}:

\begin{lem}\label{Christ'slemma} 
  There exists a collection of open sets
  $\{Q_\tau^k\subset \mathcal{X}: k\in
  \mathbb{Z}, \tau\in I_k\}$, where $I_k$ denotes certain (possibly
  finite) index sets depending on $k$, and constants $\rho\in (0,1)$
  $c_0\in (0,1]$ and $C_0,C_{1}\in (0,\vc)$ such that
  \begin{enumerate}[(i)]  
    \item $\mu(\mathcal{X}\backslash \cup_\tau Q_\tau^k)=0$ 
    for all $k\in \mathbb{Z}$;
    \item if $\ell\geq k$ and $\tau\in I_\ell, \beta\in I_k$, 
    then either $Q_\tau^\ell \subset Q_\beta^k$ or $Q_\tau^\ell \cap
    Q_\beta^k=\emptyset$;
    \item for $k\in \mathbb{Z}$, $\tau\in I_k$ 
    and each $\ell<k$, there exists a unique $\tau'\in I_\ell$
    such that $Q_\tau^k\subset Q_{\tau'}^\ell$;
    \item the diameters of the sets satisfy
    ${\rm diam}\,(Q_\tau^k)\leq C_1 \rho^k$;
    \item for $k\in \mathbb{Z}$, $\tau\in I_k$ 
    there exists $x_{Q_\tau^k}\in X$ such that
    $$B(x_{Q_\tau^k}, c_0\rho^k)\subset 
    Q_\tau^k\subset B(x_{Q_\tau^k}, C_0\rho^k).
    $$
  \end{enumerate}
\end{lem}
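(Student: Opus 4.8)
\textbf{Proof proposal for Lemma \ref{Christ'slemma}.}

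The plan is to reproduce, with minor adaptation to the generality of a doubling (quasi-)metric measure space, the construction of M.~Christ in \cite{C}, which builds the dyadic cubes by a greedy maximal-separation argument followed by a stopping-time assignment of "children" to "parents" across scales. First I would fix $\rho\in(0,1)$ small enough (depending only on the doubling constant; any $\rho\le 1/100$, say, will work after the constants are tracked) and, for each level $k\in\mathbb Z$, choose a maximal $\rho^k$-separated subset $\{x_\tau^k:\tau\in I_k\}$ of a fixed countable dense subset of $\mathcal X$; maximality guarantees the balls $B(x_\tau^k,\rho^k)$ cover $\mathcal X$ up to a $\mu$-null set (in fact cover it entirely, since the measure is Borel and the complement of the union of closed balls of radius $\rho^k$ would contain a point farther than $\rho^k$ from every $x_\tau^k$, contradicting maximality), while $\rho^k$-separation plus doubling gives the key bounded-overlap property: each point lies in at most $C$ of the dilated balls $B(x_\tau^k, A\rho^k)$ for any fixed $A$. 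This is where the doubling property \eqref{doublingpro} enters in an essential way, and it is the only structural hypothesis genuinely used.

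Next I would define a partial order on the set of all centers $\{(x_\tau^k,k)\}$ by declaring $(x_\tau^\ell,\ell)\preceq(x_\beta^k,k)$ when $\ell\ge k$ and $d(x_\tau^\ell,x_\beta^k)\le \tfrac{1}{2}\rho^k$ (or any similar threshold), and then, for each pair of consecutive levels, assign to every level-$\ell$ center a unique level-$k$ "ancestor" for each $k<\ell$ by a consistent choice (e.g.\ the nearest admissible ancestor, with ties broken by a fixed well-ordering of the index set); transitivity of the assignment across scales is arranged by induction on $k$. The cube $Q_\tau^k$ is then defined as the union over all $\ell\ge k$ of the balls $B(x_\sigma^\ell,c\rho^\ell)$ (with $c$ a small fixed constant) taken over all descendants $(x_\sigma^\ell,\ell)$ of $(x_\tau^k,k)$; taking the interior, or an open neighborhood of this set, makes it open. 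Property (i) is immediate from the covering property at each level; properties (ii) and (iii) follow from the construction of the ancestor assignment, since two cubes at comparable scales are built from disjoint families of descendant balls once one is not an ancestor of the other; property (iv) follows because $Q_\tau^k$ is contained in $B(x_\tau^k, C_1\rho^k)$ by summing a geometric series in the descendant radii; and property (v) requires checking the inner inclusion $B(x_{Q_\tau^k},c_0\rho^k)\subset Q_\tau^k$, which one gets by showing that every point within $c_0\rho^k$ of $x_\tau^k$ is, at every finer level $\ell$, within $c\rho^\ell$ of some descendant center — an argument by downward induction on $\ell$ using maximality of the separated sets.

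The main obstacle — and the only part that is not bookkeeping — is establishing the nesting property (ii) together with the two-sided ball inclusion (v) \emph{simultaneously}: one must choose the separation threshold defining the ancestor relation, the radius constant $c$ in the definition of the cubes, and the inner/outer constants $c_0, C_0$ in a mutually compatible way so that descendants of distinct level-$k$ centers never collide (forcing disjointness of the corresponding cubes) while descendants of a given center fill out a full ball around it. This is a finite but delicate chain of quantitative inequalities; I would carry it out by first fixing $\rho$, then $c$, then the threshold, and finally reading off $c_0, C_0, C_1$, verifying at each stage that the doubling-induced bounded-overlap bound is not violated. Since all of this is already done in full detail in \cite{C} (see also the refinements in \cite{HytonenMartikainen12}-type treatments), I would present the construction in outline and refer to \cite{C} for the verification of the constants, stating explicitly that nothing beyond the doubling property \eqref{doublingpro} is needed — in particular the reverse doubling condition \eqref{RD-eq} plays no role here.
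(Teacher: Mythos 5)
The paper does not prove this lemma: it is stated as a recalled result and attributed directly to \cite{C}, with no argument given. Your outline is a faithful summary of Christ's construction (maximal $\rho^k$-separated nets with bounded overlap from doubling, an ancestor relation across scales, cubes built as unions of descendant balls, and a mutually compatible choice of constants to secure nesting and the two-sided ball inclusion), and your observation that only the doubling property \eqref{doublingpro} enters, with reverse doubling playing no role here, is correct and consistent with how the paper invokes the lemma.
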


\begin{rem}\label{rem1}
  (a)
  The constants $\rho, c_0$ and $C_0$ are inessential for our 
  purposes, thus, without loss of generality, we may assume that
  $\rho=a_0=1/2$ and $C_0=1$. We then fix a collection of open sets
  in Lemma \ref{Christ'slemma} and denote this collection by 
  $\mathcal{D}$. We call these open sets the \emph{dyadic cubes} in 
  $\mathcal{X}$ and $x_{Q_\tau^k}$ the \emph{center} of the cube 
  $Q_\tau^k$. We also write 
  $\mathcal{D}_\nu:=\{Q_\tau^{\nu}: \tau\in I_{\nu}\}$ for each 
  $\nu\in \mathbb{Z}$. We have then 
  $\ell_Q:={\rm diam}\; Q \sim 2^{-\nu}$
  for all $Q\in \mathcal{D}_\nu$.
  
  (b)
  From the doubling property \eqref{doublingpro1}, there exists a constant $C$ such that for any $x\in \mathcal{X}$ and $k\in \mathbb{N}$ there are at most $C2^{kn}$ dyadic cubes in $\mathcal{D}_0$ which cover the ball $B(x,2^k)$.
\end{rem}

\subsection{Amalgam spaces} 

For $1\leq p, q\leq \vc$ and $\nu\in \mathbb{Z}$, we define the 
space $X^{p,q}_\nu$ as the vector space of all measurable functions 
$f:\mathcal{X}\to \mathbb{C}$ such that the following norm is 
finite:
\begin{equation}\label{eq:defamalgam}
  \|f\|_{X^{p,q}_\nu}:=\Big(\sum_{Q\in \mathcal{D}_\nu}
  \|f\|_{L^q(Q)}^p\Big)^{1/p}
\end{equation}
with the usual modification when $p=\vc$. 
We also write $X^{p,q}=X_0^{p,q}$.

The following embedding holds:

\begin{prop}
  \label{prop-embedding}
  For $1\leq p\leq q\leq \vc$ and $\nu\in \mathbb{Z}$ we have
  $$
  \|f\|_{X^{p,q}}\le C
   \Big(1+2^{-\nu n(\f{1}{p}-\f{1}{q})}\Big)
  \|f\|_{X^{p,q}_\nu}
  $$
  where $C$ depends only on the constant $c_{1}$
  in the doubling property \eqref{doublingpro}.
\end{prop}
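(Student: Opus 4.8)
The plan is to compare the two amalgam norms cube by cube, passing from the fine scale $\nu$ (where $\mathbb{Z}\ni\nu$ indexes cubes of diameter $\sim 2^{-\nu}$) to the scale $0$. Fix $Q\in\mathcal{D}_{0}$. By Lemma \ref{Christ'slemma}(ii)--(iii), the cube $Q$ is partitioned (up to a $\mu$-null set, by (i)) into the finitely many cubes $Q'\in\mathcal{D}_{\nu}$ with $Q'\subset Q$; call this family $\mathcal{C}(Q)$. For $1\le p\le q\le\infty$, H\"older's inequality on each $Q'\in\mathcal{C}(Q)$, together with the size control $\mu(Q')\lesssim \mu(B(x_{Q'},2^{-\nu}))$ from Lemma \ref{Christ'slemma}(v) and the doubling property, gives $\|f\|_{L^{q}(Q')}\ge \mu(Q')^{\frac1q-\frac1p}\|f\|_{L^{p}(Q')}$ in one direction; but the direction we actually want is to bound $\|f\|_{L^{q}(Q)}$ from above. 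Since the $Q'$ are disjoint, $\|f\|_{L^{q}(Q)}^{q}=\sum_{Q'\in\mathcal{C}(Q)}\|f\|_{L^{q}(Q')}^{q}$ (for $q<\infty$; the case $q=\infty$ is the trivial sup), hence $\|f\|_{L^{q}(Q)}\le \sum_{Q'\in\mathcal{C}(Q)}\|f\|_{L^{q}(Q')}$ by subadditivity of $t\mapsto t^{1/q}$.

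Next, since $\mathcal{D}_{\nu}$ is already a sub-level of $\mathcal{D}_{0}$ only when $\nu\ge 0$, I would treat the two cases separately. If $\nu\ge 0$, summing the last display over $Q\in\mathcal{D}_{0}$ and using that the $\mathcal{C}(Q)$ partition $\mathcal{D}_{\nu}$ yields directly $\|f\|_{X^{p,q}}=\sum_{Q}\|f\|_{L^{q}(Q)}\le\sum_{Q'\in\mathcal{D}_{\nu}}\|f\|_{L^{q}(Q')}=\|f\|_{X^{p,q}_{\nu}}$, so the constant $1$ suffices. If $\nu<0$, the roles reverse: each $Q\in\mathcal{D}_{\nu}$ contains a bounded (by $2^{-\nu n}\sim$ number of $\mathcal{D}_0$-cubes, cf. Remark \ref{rem1}(b)) family of cubes $Q''\in\mathcal{D}_{0}$ with $Q''\subset Q$, covering $\mathcal{X}$ up to null sets, and $Q\subset B(x_{Q},C_0 2^{-\nu})$. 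Then $\|f\|_{L^{q}(Q'')}\le \mu(Q'')^{\frac1q-\frac1p}\|f\|_{L^{p}(Q'')}$ — no wait, for the embedding $L^p\hookrightarrow L^q$ on a finite-measure set one needs $q\le p$; with $p\le q$ the inequality goes the wrong way. The correct route here is: $\|f\|_{L^q(Q'')}\le \mu(Q'')^{\frac1p-\frac1q}\cdot\big(\mu(Q'')^{-1}\int_{Q''}|f|^q\big)^{1/q}$, which is not what we want either. Instead I would simply use the Minkowski-type bound $\|f\|_{X^{p,q}}=\sum_{Q''\in\mathcal{D}_0}\|f\|_{L^q(Q'')}$ and, grouping the $Q''$ by the parent $Q\in\mathcal{D}_{\nu}$ containing them (at most $\sim 2^{-\nu n}$ of them), apply H\"older in the counting measure: $\sum_{Q''\subset Q}\|f\|_{L^q(Q'')}\le (\#\{Q''\subset Q\})^{1-1/q}\big(\sum_{Q''\subset Q}\|f\|_{L^q(Q'')}^{q}\big)^{1/q}$ — but this still has the wrong exponent structure. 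The clean statement is rather: for $\nu<0$, bound $\|f\|_{L^q(Q'')}\lesssim 2^{-\nu n(\frac1p-\frac1q)}\|f\|_{L^p(Q'')}$ is \emph{false}; what is true and what the factor $2^{-\nu n(1/p-1/q)}$ in the Proposition signals is the reverse-direction H\"older bound applied at scale $0$ against scale $\nu$, namely $\|g\|_{X^{1,q}_0}\lesssim \big(\text{number of }\mathcal{D}_0\text{-cubes per }\mathcal{D}_\nu\text{-cube}\big)^{1-1/q}\|g\|_{X^{1,q}_\nu}$ when $\nu<0$; and the count is $\lesssim 2^{-\nu n}$ by Remark \ref{rem1}(b), giving the exponent $n(1-1/q)\le n(1/p-1/q)$ only if $p\ge 1$...

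Let me state the plan more honestly: the proof splits as $\|f\|_{X^{p,q}}\le \|f\|_{X^{p,q}_{\nu}}$ for $\nu\ge 0$ (trivial refinement + subadditivity of $t^{1/q}$ and $t^{1/p}$ as above), and for $\nu<0$ one writes $\|f\|_{X^{p,q}}$ as a sum over $\mathcal{D}_0$-cubes, regroups by the $\mathcal{D}_\nu$-parent, applies discrete H\"older with the cardinality bound $\#\{Q''\in\mathcal{D}_0:Q''\subset Q\}\lesssim 2^{-\nu n}$ from Remark \ref{rem1}(b) on the inner $\ell^p$-sum, and uses the doubling constant $c_1$ to control the implicit constant; combining the two cases produces the stated factor $C(1+2^{-\nu n(1/p-1/q)})$. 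The main obstacle is the bookkeeping for $\nu<0$: one must match the exponent coming from discrete H\"older over the $\lesssim 2^{-\nu n}$ sub-cubes with the exponent $n(1/p-1/q)$ in the claim, which forces the right order of applying H\"older in the $\ell^p$ (outer) sum versus the $L^q$ (inner) integral, and requires Minkowski's inequality for the $\ell^p(\ell^q)$-mixed norm. The doubling dimension $n$ enters only through the cardinality bound, and all constants are traced back to $c_1$.
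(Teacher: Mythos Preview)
The paper itself gives no proof here---it simply says ``elementary and we leave it to the reader''---so there is nothing to compare against except the natural argument, which is indeed the case split $\nu\ge 0$ versus $\nu<0$ that you outline in your final paragraph. That plan is correct. The problem is that your write-up never actually lands the computation, and along the way you dismiss as ``wrong exponent structure'' the very step that works.

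Two concrete issues. First, in your $\nu\ge 0$ case you write $\|f\|_{X^{p,q}}=\sum_{Q}\|f\|_{L^{q}(Q)}$, which is the $p=1$ norm only. For general $p\le q$ the correct inequality is
\[
\|f\|_{L^q(Q)}^p
=\Bigl(\sum_{Q'\in\mathcal{C}(Q)}\|f\|_{L^q(Q')}^q\Bigr)^{p/q}
\le \sum_{Q'\in\mathcal{C}(Q)}\|f\|_{L^q(Q')}^p
\]
by the embedding $\ell^p\hookrightarrow\ell^q$; summing over $Q\in\mathcal{D}_0$ gives $\|f\|_{X^{p,q}}\le\|f\|_{X^{p,q}_\nu}$.

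Second, for $\nu<0$ you try
\[
\sum_{Q''\subset Q}\|f\|_{L^q(Q'')}
\le N^{1-1/q}\Bigl(\sum_{Q''\subset Q}\|f\|_{L^q(Q'')}^{q}\Bigr)^{1/q}
= N^{1-1/q}\|f\|_{L^q(Q)},
\]
and then reject it. But this is exactly right for $p=1$ (giving exponent $n(1-1/q)=n(1/p-1/q)$), and the same H\"older with exponents $q/p$ and $(q/p)'$ applied to $a_{Q''}=\|f\|_{L^q(Q'')}^p$ gives
\[
\sum_{Q''\subset Q}\|f\|_{L^q(Q'')}^p
\le N^{1-p/q}\Bigl(\sum_{Q''\subset Q}\|f\|_{L^q(Q'')}^q\Bigr)^{p/q}
= N^{1-p/q}\|f\|_{L^q(Q)}^p.
\]
With $N\lesssim 2^{-\nu n}$ (Remark~\ref{rem1}(b), which is where $c_1$ enters), summing over $Q\in\mathcal{D}_\nu$ and taking the $p$-th root yields $\|f\|_{X^{p,q}}\lesssim 2^{-\nu n(1/p-1/q)}\|f\|_{X^{p,q}_\nu}$. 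That is the whole ``bookkeeping'' you flag as an obstacle; there is nothing more to it.
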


\begin{proof}
  The proof of this proposition is elementary and we leave it to the
  reader.
\end{proof}

Recall that ${\rm Ad}^{j}_{x}(T)$ denotes the $j$-th order
commutator of an operator $T$ with the weight function
$w_{x}(\cdot)=w(x,\cdot)$, $w:\mathcal{X}\times \mathcal{X}\to 
  \mathbb{R}^{\ell}$, satisfying \eqref{eq:weight}.

\begin{thm} \label{thm-commutator}
  Let $T$ be a bounded operator on $L^2(\mathcal{X})$. 
  Assume that ${\rm Ad}^k_{z}$ can be extended to be a bounded operator on $L^2(X)$ and there exists some constant $B_0\geq 1$ so that 
  $$
  \|{\rm Ad}^k_{z}(T)\|_{2\to 2}\leq B_0^k
  $$
   for all $0\leq k\leq \lfloor n/2\rfloor+1$ 
  and all $z\in \mathcal{X}$.

  Then for $1\leq p\leq 2$ we have
  $$
  \|T\|_{X^{p,2}\to X^{p,2}}\leq C B_0^{n(1/p-1/2)}
  $$
  where $C$ is a constant depending only on $n$, $\|T\|_{2\to 2}$
  and $K_{0}$ (from \eqref{eq:weight}).
\end{thm}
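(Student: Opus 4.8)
The plan is to interpolate between the endpoint cases $p=1$ and $p=2$. For $p=2$ the estimate $\|T\|_{X^{2,2}\to X^{2,2}}\lesssim 1$ is immediate, since $X^{2,2}=L^2(\mathcal{X})$ and $B_0^{n(1/2-1/2)}=1$, so the claim reduces to the hypothesis $\|T\|_{2\to2}\le B_0^0\cdot(\text{const})$. The substance of the proof is therefore the endpoint $p=1$, where I must show $\|T\|_{X^{1,2}\to X^{1,2}}\lesssim B_0^{n/2}$; the general case then follows by complex interpolation of the analytic family $T$ (which is fixed here, so really just by the Riesz--Thorin-type interpolation for the scale $X^{p,2}_0$, using that $X^{p,2}_0$ is a retract of a vector-valued $L^p$ space $\ell^p(L^2)$ over the cubes).

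For the $p=1$ endpoint the idea, following Jensen--Nakamura, is to localize $T$ using the commutators with the weight. Fix $Q'\in\mathcal{D}_0$ with center $z_{Q'}$, and for $f$ supported in $Q'$ estimate $\|1_Q Tf\|_{L^2}$ for each $Q\in\mathcal{D}_0$. When $Q$ and $Q'$ are close, say ${\rm dist}(Q,Q')\lesssim 1$, I use $\|1_QTf\|_2\le\|T\|_{2\to2}\|f\|_2\lesssim\|f\|_2$, and there are only $O(1)$ such cubes by the doubling property (Remark \ref{rem1}(b) with $k=0$). When $Q$ is far from $Q'$, the weight $w(z_{Q'},\cdot)$ is comparable to $d(z_{Q'},\cdot)\simeq 1+{\rm dist}(Q,Q')$ on $Q$ and is $O(1)$ on $Q'$; so for any $N\le\lfloor n/2\rfloor+1$ one writes $w_{z_{Q'}}^N T = \sum_{j=0}^N \binom{N}{j} {\rm Ad}^{j}_{z_{Q'}}(T)\, w_{z_{Q'}}^{N-j}$ (the standard commutator expansion), hence on $Q$
\begin{equation*}
  (1+{\rm dist}(Q,Q'))^N\|1_QTf\|_2
  \lesssim \sum_{j=0}^N \|{\rm Ad}^{j}_{z_{Q'}}(T)\|_{2\to2}\,\|w_{z_{Q'}}^{N-j}f\|_2
  \lesssim \Big(\sum_{j=0}^N B_0^{j}\Big)\|f\|_2 \lesssim B_0^{N}\|f\|_2,
\end{equation*}
using that $f$ is supported on $Q'$ where $w_{z_{Q'}}$ is bounded and $B_0\ge1$. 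Taking $N=\lfloor n/2\rfloor+1$ gives $\|1_QTf\|_2\lesssim B_0^{N}(1+{\rm dist}(Q,Q'))^{-N}\|f\|_2$.

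Summing over $Q\in\mathcal{D}_0$ I need $\sum_{Q}(1+{\rm dist}(Q,Q'))^{-N}<\infty$, uniformly in $Q'$. Since each dyadic cube in $\mathcal{D}_0$ has diameter $\simeq 1$ and contains a ball of radius $\simeq 1$, the number of cubes $Q$ with $R\le 1+{\rm dist}(Q,Q')<R+1$ is $O(V(z_{Q'},R+C)/V(z_{Q'},c))=O(R^n)$ by doubling; hence the tail sum is $\lesssim\sum_{R\ge1}R^{n}R^{-N}\cdot R^{-1}\cdot R$, which converges precisely because $N=\lfloor n/2\rfloor+1>n/2$... wait — convergence needs $N>n$, not $N>n/2$. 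This is the delicate point, and the resolution is that one does not sum $\|1_QTf\|_2$ directly: by Cauchy--Schwarz in the $Q$-sum,
\begin{equation*}
  \sum_{Q}\|1_QTf\|_2
  \le \Big(\sum_Q (1+{\rm dist}(Q,Q'))^{-2N}\Big)^{1/2}
      \Big(\sum_Q (1+{\rm dist}(Q,Q'))^{2N}\|1_QTf\|_2^2\Big)^{1/2},
\end{equation*}
where the first factor converges since $2N>n$, and the second is controlled by $B_0^{N}$ times $\sum_Q\|w_{z_{Q'}}^N 1_Q Tf\|_2^2 \lesssim B_0^{2N}\|f\|_2^2$ by the commutator expansion again (now summing the squares and using that the $1_Q$ have bounded overlap). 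Thus $\sum_Q\|1_QTf\|_2\lesssim B_0^{N}\|f\|_2$ for $f$ supported in a single $Q'$, and for general $f\in X^{1,2}$ one sums over $Q'\in\mathcal{D}_0$ by the triangle inequality to get $\|Tf\|_{X^{1,2}}\lesssim B_0^{N}\|f\|_{X^{1,2}}$. Since $N=\lfloor n/2\rfloor+1\le n/2+1$, this is not yet the claimed $B_0^{n/2}$ — but one recovers the sharp exponent after interpolation with $p=2$: interpolating $\|T\|_{X^{1,2}\to X^{1,2}}\lesssim B_0^{\lfloor n/2\rfloor+1}$ with $\|T\|_{X^{2,2}\to X^{2,2}}\lesssim 1$ does give a bound $B_0^{(\lfloor n/2\rfloor+1)(2/p-1)}$ at intermediate $p$, and at $p=1$ one must be slightly more careful — the honest route is to observe that the second Cauchy--Schwarz factor above actually only costs $B_0^{N}$ with $N=\lfloor n/2\rfloor+1$ but one can trade the first (convergent) factor's decay to lower the effective exponent; alternatively, and more cleanly, one runs the $p$-dependent argument directly, estimating $\|Tf\|_{X^{p,2}}$ via the vector-valued inequality with weight $(1+{\rm dist}(Q,Q'))^{\sigma}$ for a $\sigma$ chosen depending on $p$ so that $B_0^{\sigma}\sim B_0^{n(1/p-1/2)}$, which is the formulation that yields the stated exponent. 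The main obstacle, and the place requiring the most care, is exactly this bookkeeping of exponents: making the commutator order $\lfloor n/2\rfloor+1$ (fixed, independent of $p$) produce the sharp $p$-dependent power $B_0^{n(1/p-1/2)}$, which is why the Cauchy--Schwarz / interpolation step, rather than a crude term-by-term sum, is essential.
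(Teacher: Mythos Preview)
Your overall strategy matches the paper's: reduce by interpolation to the $X^{1,2}$ endpoint, localize to cubes, use the Jensen--Nakamura commutator expansion $w_{z}^N T=\sum_j c_{N,j}\,{\rm Ad}^j_z(T)\,w_z^{N-j}$, and apply Cauchy--Schwarz in the cube sum. But there is a genuine gap, which you yourself flag and do not resolve: your argument at $p=1$ only yields $\|T\|_{X^{1,2}\to X^{1,2}}\lesssim B_0^{\lfloor n/2\rfloor+1}$, not $B_0^{n/2}$, and neither of the fixes you sketch actually works. Interpolation with $p=2$ gives exponent $2(\lfloor n/2\rfloor+1)(1/p-1/2)$, which is strictly worse than $n(1/p-1/2)$ for every $p<2$; and the ``$p$-dependent $\sigma$'' idea fails because convergence of $\sum_Q (1+{\rm dist}(Q,Q'))^{-2\sigma}$ forces $\sigma>n/2$, so $B_0^\sigma$ can never reach $B_0^{n/2}$ that way.

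The missing idea is to introduce a free cutoff parameter $\alpha$ (rather than splitting near/far at distance $\simeq 1$) and optimize over it. Split $\sum_{Q'}\|1_{Q'}Tf_Q\|_2$ according to whether $d(x_Q,x_{Q'})\le\alpha$ or $>\alpha$. For the near part, Cauchy--Schwarz together with the doubling count $\sharp\{Q':d(x_Q,x_{Q'})\le\alpha\}\lesssim\alpha^n$ gives $\lesssim\alpha^{n/2}\|T\|_{2\to2}\|f_Q\|_2$. For the far part, your Cauchy--Schwarz with weight $d(x_Q,x_{Q'})^{\pm N}$, $N=\lfloor n/2\rfloor+1$, gives
\begin{equation*}
  \Big(\sum_{d(x_Q,x_{Q'})>\alpha}d(x_Q,x_{Q'})^{-2N}\Big)^{1/2}\cdot B_0^{N}\|f_Q\|_2
  \lesssim \alpha^{n/2-N}B_0^{N}\|f_Q\|_2,
\end{equation*}
since $2N>n$. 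Now choose $\alpha\simeq B_0$ (legitimate because $B_0\ge1$, so $\alpha$ exceeds the cube diameters): both terms become $\lesssim B_0^{n/2}\|f_Q\|_2$, which is the sharp endpoint bound. This balancing step---letting the near/far threshold scale with $B_0$---is exactly what the paper does and is the one piece your proposal lacks.
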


\begin{proof}
  We first note that the $L^2$-boundedness of $T$ implies 
  $$
  \|T\|_{X^{2,2}_{\nu_{0}}\to X^{2,2}_{\nu_{0}}}\leq C.
  $$
  Hence, by interpolation it suffices to prove that 
  $$
    \|T\|_{X^{1,2}\to X^{1,2}}\leq C B_0^{n/2}.
  $$
    To prove this, let $w=(w_{1},\dots,w_{\ell})$
    be the weight function and recall that
    ${\rm Ad}_{j,z}^{k}(T)$ denotes the commutator of order $k$ with
    multiplication by $w_{j,z}:=w_{j}(z,\cdot)$.
    We use a combinatorial identity from 
    \cite[Lemma 3.1]{JensenNakamura95-a} and we write
    \begin{equation}\label{JN'sFormula}
      w_{j,z}^m T = \sum_{k=0}^m 
      c_{m,k} {\rm Ad}_{j,z}^k(T) w_{j,z}^{m-k},
      \qquad
      j=1,\dots,\ell
    \end{equation}
    where $c_{m,k}$ are appropriate constants.
    Denote also by $d_{z}$ the multiplication operator by
    $d(z,\cdot)$.
    Then, we have for every $x_Q$ with 
    $ Q\in \mathcal{D}_0$ and $N,m \in \mathbb{N}$ with
    $ 0\leq m\leq N\leq \lfloor n/2\rfloor +1$,
    $$
    \begin{aligned}
      \||w_{j,x_{Q}}|^m T[1+d_{x_{Q}}]^{-N}\|_{2\to 2}&
      \leq \sum_{k=0}^m c_{m,k} \|{\rm Ad}_{j,x_Q}^k(T)\|_{2\to 2} 
      \|w_{j,x_{Q}}^{m-k}[1+d_{x_{Q}}]^{-N}\|_{2\to 2}\\
      &\leq CB_0^m
    \end{aligned}
    $$
    since $|w_{j}|$ is dominated by $d$.
    Summing over $j=1,\dots,\ell$ and recalling
    \eqref{eq:weight} we obtain,
    for $0\leq N\leq \lfloor n/2\rfloor +1$,
    \begin{equation}\label{eq1.1}
    \left\|[1+d_{x_{Q}}]^{N} T [1+d_{x_{Q}}]^{-N}\right\|_{2\to 2}\leq CB_0^N.
    \end{equation}
    This implies
    \begin{equation}\label{eq1.2bis}
    \left\|d_{x_{Q}}^{N} T 1_Q\right\|_{2\to 2}\leq CB_0^N, \ \ \forall Q\in \mathcal{D}_0.
    \end{equation}
    Let $f\in \mathscr{D}(w)$. For each cube $Q$, write $f_Q=f1_Q$. 
    Then we have
    \begin{equation*}
    \begin{aligned}
      \|Tf\|_{X^{1,2}}=\sum_{Q'\in \mathcal{D}_0}\|1_{Q'}Tf\|_2\leq \sum_{Q\in \mathcal{D}_0}\sum_{Q'\in \mathcal{D}_0}\|1_{Q'}Tf_Q\|_2.
    \end{aligned}
    \end{equation*}
    Let now $\alpha$ be a constant which will be precised later;
    for the moment we assume only 
    $\alpha\ge 2\ \sup_{Q\in \mathcal{D}_{0}}{\rm diam}\ Q$.
    For each $Q\in \mathcal{D}_0$ we can write
    \begin{equation}\label{eq1.4}
      \sum_{Q'\in \mathcal{D}_0}\|1_{Q'}Tf_Q\|_2=I+II
    \end{equation}
    where
    \begin{equation*}
      I=\sum_{Q':d(x_Q,x_{Q'})>\alpha}
      d(x_Q,x_{Q'})^{-N}d(x_Q,x_{Q'})^N\|1_{Q'}Tf_Q\|_2,
    \end{equation*}
    \begin{equation*}
      II=\sum_{Q':d(x_Q,x_{Q'})\leq \alpha}\|1_{Q'}Tf_Q\|_2.
    \end{equation*}
    On the other hand, by Remark \ref{rem1} we get 
    \begin{equation}\label{eq1.3}
      \sharp\{Q'\in \mathcal{D}_0: 
      d(x_Q,x_{Q'})\leq \alpha\}\les \alpha^{n}.
    \end{equation}
    This, in combination with H\"older's inequality, implies that
    $$
    \begin{aligned}
    II&\leq \Big(\sum_{Q':d(x_Q,x_{Q'})\leq \alpha}1\Big)^{1/2}\Big(\sum_{Q':d(x_Q,x_{Q'})\leq \alpha}\|1_{Q'}Tf_Q\|^2_2\Big)^{1/2}\\
    &\les \alpha^{n/2}\|Tf_Q\|_2\\
    &\les \alpha^{n/2}\|T\|_{2\to 2}\|f_Q\|_2.
    \end{aligned}
    $$
    On the other hamd, for $x'\in Q'$ we have
    $d(x_{Q},x')\ge d(x_{Q},x_{Q'})-{\rm diam}(Q')$,
    thus if $d(x_Q,x_{Q'})\geq \alpha$ we have
    $d(x_{Q},x') \gtrsim d(x_Q,x_{Q'})$ by the assumption on $\alpha$.
    Then we can write
    $$
    \begin{aligned}
    I&\leq \Big(\sum_{Q':d(x_Q,x_{Q'})\geq \alpha}d(x_Q,x_{Q'})^{-2N}\Big)^{1/2}\Big(\sum_{Q':d(x_Q,x_{Q'})\geq \alpha}d(x_Q,x_{Q'})^{2N}\|1_{Q'}Tf_Q\|^2_2\Big)^{1/2}\\
  &\lesssim \Big(\sum_{Q':d(x_Q,x_{Q'})\geq \alpha}d(x_Q,x_{Q'})^{-2N}\Big)^{1/2}\|d(\cdot,x_{Q})^{N}Tf_Q\|_2
    \end{aligned}
    $$
    which along with \eqref{eq1.2bis} and \eqref{eq1.3} yields
    $$
    I\les \alpha^{-N+n/2} B_0^{N}\|f_Q\|_{2}
    $$
    provided that $N>n/2$.
  
  Inserting the estimates of $I$ and $II$ into \eqref{eq1.4} and taking 
  $\alpha=CB_{0}$ for a suitable $C$ (depending only
  on $\sup_{Q\in \mathcal{D}_{0}}{\rm diam}\ Q$)
  % $\alpha=\f{B_0}{\|T\|^{1/N}_{2\to 2}}$, 
  we obtain
  $$
  \sum_{Q'\in \mathcal{D}_0}\|1_{Q'}Tf_Q\|_2\les  
  (1+\|T\|_{2\to 2})\cdot B_0^{\f{n}{2}}\|f_Q\|_2.
  $$
  Therefore,
  $$
  \|Tf\|_{X^{1,2}}\les 
  (1+\|T\|_{2\to 2})\cdot B_0^{\f{n}{2}}\|f\|_{X^{1,2}}, \ \ \ f\in \mathscr{D}(w).
  $$
  Since $X^{1,2}\hookrightarrow L^2(X)$, $\mathscr{D}(w)$ is dense in $X^{1,2}$. It follows
  $$
  \|Tf\|_{X^{1,2}}\les 
  (1+\|T\|_{2\to 2})\cdot B_0^{\f{n}{2}}\|f\|_{X^{1,2}}, \ \ \ f\in X^{1,2}.
  $$
  On the other hand, since $T$ is bounded on $L^2$, we have
  $$
  \|Tf\|_{X^{2,2}}\les \|T\|_{2\to 2}\|f\|_{X^{2,2}}.
  $$
  Interpolating between the two estimates we get the claim.
\end{proof}

We conclude this section by proving that assumption 
\eqref{eq2-assumption} implies \eqref{eq2-assumption-0}
and \eqref{eq2-assumption-1}, as stated in the Introduction.

\begin{prop}[]\label{pro:weakerL}
  Let the weight function be $w(x,y)=d(x,y)$.
  Assume that $L$ is a self--adjoint operator in $L^2(\mathcal{X})$ 
  with $L+M_{0}\ge0$ for some $M_{0}\in \mathbb{R}$,
  satisfying the following condition: there exist 
  $p_0\in [1,2)$, $m_1, m_2>0$ and $C\ge0$ such that for all 
  $t>0$ and $\nu\in\mathbb{Z}$ with either 
  $2^{-\nu}\leq t^{1/m_1}<2^{-\nu+1}, 0<t<1$ or $2^{-\nu}\leq t^{1/m_2}<2^{-\nu+1}, t\geq 1$ we have
  \begin{equation}\label{eq2-assumptionbis}
  \sup_{Q'\in \mathcal{D}_\nu}\sum_{Q\in \mathcal{D}_\nu}(1+2^\nu {\rm dist}(Q,Q'))^{N}\|1_Q e^{-tL}1_{Q'}\|_{2\to 2}
  \leq C e^{M_{0}t}, \ \ \ \ 
  N=\lfloor n/2\rfloor+1.
  \end{equation}
  Then there exist $C_{1}\ge0$ such that
  for all $t$ and $\nu$ as above we have
  \begin{equation}\label{eq2-assumption-0bis}
    \|{\rm Ad}^{k}_{x}(e^{-tL})\|_{2\to2}\le
    C_{1}e^{M_{0}t}2^{-k\nu},
    \qquad
    0\le k\le\lfloor n/2\rfloor+1,\qquad x\in \mathcal{X}.
  \end{equation}
\end{prop}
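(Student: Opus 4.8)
The plan is to pass from the off--diagonal decay estimate \eqref{eq2-assumptionbis} on the heat semigroup to the commutator bound \eqref{eq2-assumption-0bis} for the same fixed $t$. Write $T=e^{-tL}$, which is self--adjoint and satisfies $\|T\|_{2\to2}\le e^{M_0t}$. Fix $\nu$ associated to $t$ as in the hypothesis and note that by Lemma \ref{Christ'slemma} the cubes $Q\in\mathcal D_\nu$ form an almost partition of $\mathcal X$ into sets of diameter $\sim 2^{-\nu}$, so $\sum_{Q\in\mathcal D_\nu}1_Q=1$ a.e. and $\{1_Q\}$ is an orthogonal decomposition of $L^2(\mathcal X)$. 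The first step is therefore to expand ${\rm Ad}_x^k(T)=\sum_{Q,Q'\in\mathcal D_\nu}1_Q\,{\rm Ad}_x^k(T)\,1_{Q'}$ and estimate each block $1_Q\,{\rm Ad}_x^k(T)\,1_{Q'}$.

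The key point in the second step is that the commutator with the multiplication operator $d_x=d(x,\cdot)$ turns into a \emph{difference of values of $d(x,\cdot)$} once it is sandwiched between $1_Q$ and $1_{Q'}$. Indeed, by induction on $k$,
\begin{equation*}
  1_Q\,{\rm Ad}_x^k(e^{-tL})\,1_{Q'}
  = 1_Q\,\bigl(\text{sum of terms }\pm d_x^{a}e^{-tL}d_x^{b}\text{ with }a+b=k\bigr)\,1_{Q'},
\end{equation*}
and on the supports $Q,Q'$ each factor $d(x,\cdot)$ can be written as $d(x,\cdot)=d(x,y_0)+O\bigl({\rm dist}(Q,Q')+2^{-\nu}\bigr)$ for a reference point; after the telescoping cancellation in the alternating sum of the commutator one is left, for each ordered pair $(Q,Q')$, with a factor bounded by $C\bigl({\rm dist}(Q,Q')+2^{-\nu}\bigr)^k$ times $\|1_Qe^{-tL}1_{Q'}\|_{2\to2}$. (This is exactly the mechanism used in \cite{JensenNakamura95-a}; here it must be carried out at scale $2^{-\nu}$ rather than scale $1$.) Thus
\begin{equation*}
  \|1_Q\,{\rm Ad}_x^k(e^{-tL})\,1_{Q'}\|_{2\to2}
  \le C\bigl(2^{-\nu}+{\rm dist}(Q,Q')\bigr)^k\|1_Qe^{-tL}1_{Q'}\|_{2\to2}
  \le C\,2^{-k\nu}\bigl(1+2^\nu{\rm dist}(Q,Q')\bigr)^k\|1_Qe^{-tL}1_{Q'}\|_{2\to2}.
\end{equation*}

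The third step assembles the blocks. Since $\{Q\}_{Q\in\mathcal D_\nu}$ and $\{Q'\}$ are orthogonal families, an operator with blocks $A_{QQ'}=1_QA1_{Q'}$ is bounded on $L^2$ by (a Schur--type / Cotlar--Stein argument, or directly) $\bigl(\sup_{Q'}\sum_Q\|A_{QQ'}\|_{2\to2}\bigr)^{1/2}\bigl(\sup_Q\sum_{Q'}\|A_{QQ'}\|_{2\to2}\bigr)^{1/2}$; by self--adjointness of $e^{-tL}$ and symmetry of ${\rm dist}$ both factors are controlled by the single quantity in \eqref{eq2-assumptionbis}. Since $k\le\lfloor n/2\rfloor+1=N$, the weight $\bigl(1+2^\nu{\rm dist}(Q,Q')\bigr)^k$ is dominated by $\bigl(1+2^\nu{\rm dist}(Q,Q')\bigr)^N$, so combining with the block estimate above gives
\begin{equation*}
  \|{\rm Ad}_x^k(e^{-tL})\|_{2\to2}
  \le C\,2^{-k\nu}\sup_{Q'\in\mathcal D_\nu}\sum_{Q\in\mathcal D_\nu}\bigl(1+2^\nu{\rm dist}(Q,Q')\bigr)^N\|1_Qe^{-tL}1_{Q'}\|_{2\to2}
  \le C_1\,e^{M_0t}\,2^{-k\nu},
\end{equation*}
uniformly in $x\in\mathcal X$, which is \eqref{eq2-assumption-0bis}.

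The main obstacle I anticipate is the second step: making rigorous the claim that the iterated commutator, once localized by $1_Q$ and $1_{Q'}$, gains the factor $({\rm dist}(Q,Q')+2^{-\nu})^k$. One must track the combinatorial identity for ${\rm Ad}_x^k$ carefully — expanding $[d_x,\cdot]$ $k$ times produces $2^k$ terms $d_x^a e^{-tL}d_x^b$, and the cancellation that yields the \emph{difference} $(d(x,y)-d(x,y'))^k$ rather than merely $d(x,y)^k+d(x,y')^k$ is what forces the decay to be governed by ${\rm dist}(Q,Q')$ and not by the (unbounded) distance from $x$. One also needs the elementary geometric fact that for $y\in Q$, $y'\in Q'$ one has $|d(x,y)-d(x,y')|\le d(y,y')\le {\rm dist}(Q,Q')+{\rm diam}\,Q+{\rm diam}\,Q'\lesssim {\rm dist}(Q,Q')+2^{-\nu}$, uniformly in $x$, which is just the triangle inequality. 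The remaining bookkeeping — boundedness of the operators $d_x^a e^{-tL}d_x^b 1_{Q'}$ on the relevant domain $\mathscr D(w)=L^2_c(\mathcal X)$, and passing to $L^2(\mathcal X)$ by density — is routine given conditions (i)--(ii) on $\mathscr D(w)$.
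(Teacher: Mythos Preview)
Your proposal is correct and follows essentially the same route as the paper's proof. The paper makes your second step explicit via the kernel representation: it writes the kernel of ${\rm Ad}_z^k(e^{-tL})$ as $(d_z(x)-d_z(y))^k p_t(x,y)$, then uses $|d_z(x)-d_z(y)|\le d(x_Q,x_{Q'})+d(x,x_Q)+d(y,x_{Q'})$ and the multinomial expansion to factor $1_Q\,{\rm Ad}_z^k(e^{-tL})\,1_{Q'}$ as a finite sum of compositions (bounded multiplication in $x$)$\circ(1_Q e^{-tL}1_{Q'})\circ$(bounded multiplication in $y$), each multiplication operator having norm $\lesssim 1$ and the constants contributing $(2^\nu d(x_Q,x_{Q'}))^\alpha$; this is exactly the ``telescoping cancellation'' you describe, and it yields your block estimate $\|1_Q\,{\rm Ad}_x^k(e^{-tL})\,1_{Q'}\|_{2\to2}\lesssim 2^{-k\nu}(1+2^\nu{\rm dist}(Q,Q'))^k\|1_Q e^{-tL}1_{Q'}\|_{2\to2}$ as an operator identity (not merely a pointwise kernel bound). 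The assembly step is also the same: the paper proves the row--sum bound, obtains the column--sum by self--adjointness, and invokes the Schur test (phrased there as boundedness on $X^{2,2}_\nu=L^2$), just as you do.
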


\begin{proof}%[of ...]
  By considering the nonnegative
  operator $\widetilde{L}=L+M_{0}$ instead of $L$,
  we see that we can assume $M_{0}=0$.
  If $p_{t}(x,y)$ is the kernel of the heat semigroup $e^{-tL}$
  we obtain the representation
  \begin{equation*}
    {\rm Ad}^{k}_{z}(e^{-tL})f(x)=
    \int_{\mathcal{X}}
    (d_{z}(x)-d_{z}(y))^{k}p_{t}(x,y)
    f(y)d\mu(y)
  \end{equation*}
  and our goal is  to prove that the operator
  \begin{equation*}
    Af(x)=
    2^{\nu k}\int_{\mathcal{X}}
    (d_{z}(x)-d_{z}(y))^{k}p_{t}(x,y)
    f(y)d\mu(y)
  \end{equation*}
  for $0<t<1$ and $2^{-\nu}\le t^{1/m_{1}}<2^{-\nu+1}$,
  or 
  for $t\ge1$ and $2^{-\nu}\le t^{1/m_{2}}<2^{-\nu+1}$,
  satisfies $\|A\|_{2\to2}\le C$ with $C$ independent of $\nu$.

  We shall now prove the estimate
  \begin{equation}\label{eq:partialA}
    \sup_{Q'\in \mathcal{D}_{\nu}}\sum_{Q\in \mathcal{D}_{\nu}}
    \|1_{Q}A1_{Q'}\|_{2\to2}\le C_{1}
  \end{equation}
  with constants independent of $\nu$. This
  implies the dual estimate
  \begin{equation*}
    \sup_{Q'\in \mathcal{D}_{\nu}}\sum_{Q\in \mathcal{D}_{\nu}}
    \|1_{Q'}A1_{Q}\|_{2\to2}\le  C_{1}
  \end{equation*}
  and by the Schur test for sequences the two estimates together imply that
  $A$ is bounded on $X^{p,2}_{\nu}$ with norm not larger than
  $ C_{1}$,
  for all $p\in[1,\infty]$ and all $\nu\in \mathbb{Z}$.
  Since $L^{2}=X^{2,2}_{\nu}$ for all $\nu\in \mathbb{Z}$, 
  this concludes the proof.

  It remains to prove \eqref{eq:partialA}. We write
  the kernel of $1_{Q}A1_{Q'}$ as
  \begin{equation*}
    1_{Q}A1_{Q'}(x,y)=2^{\nu k}(d_{z}(x)-d_{z}(y))^{k}
    1_{Q}(x)p_{t}(x,y)1_{Q'}(y)
  \end{equation*}
  and we use the estimate
  \begin{equation*}
    |d_{z}(x)-d_{z}(y)|\le
    d(x_{Q},x_{Q'})+d(x,x_{Q})+d(y,x_{Q'}),\ x\in Q,\ y\in Q'
  \end{equation*}
  where $Q \subset B(x_{Q},2^{-\nu})$ and $Q' \subset B(x_{Q'},2^{-\nu})$
  according to Remark \ref{rem1}. We now expand
  \begin{equation*}
    \textstyle
    |1_{Q}A1_{Q'}(x,y)|\le
    \sum\limits_{\alpha+\beta+\gamma=k}
    \frac{k!}{\alpha!\beta!\gamma!}
    (2^{\nu}d(x_{Q},x_{Q'}))^{\alpha}
    (2^{\nu}d(x,x_{Q}))^{\beta}
    1_{Q}(x)p_{t}(x,y)1_{Q'}(y)
    (2^{\nu}d(y,x_{Q'}))^{\gamma}.
  \end{equation*}
  We have trivially
  \begin{equation*}
    \|(2^{\nu}d(x,x_{Q}))^{\beta}1_{Q}\|_{2\to2}\le C
    \qquad
    \|(2^{\nu}d(y,x_{Q'}))^{\gamma}1_{Q}\|_{2\to2}\le C,
  \end{equation*}
  and recalling assumption \eqref{eq2-assumptionbis} we see that
  the proof is concluded.
\end{proof}

From condition \eqref{eq2-assumption-0bis} it is fairly easy
to deduce \eqref{eq2-assumption-0}, thus concluding the
proof of the implication 
\eqref{eq2-assumption} $\Rightarrow$ \eqref{eq2-assumption-0},
\eqref{eq2-assumption-1}.

\begin{prop}\label{pro:commR}
  Let the weight function be $w(x,y)=d(x,y)$.
  Assume $L$ satisfies \eqref{eq2-assumption-0bis}
  and $L+M_{0}\ge0$. Then for all $M>M_{0}$ we have,
  for all $z\in \mathcal{X}$ and
  $0\le k\le\lfloor n/2\rfloor+1$,
  \begin{equation}\label{eq:L2AdR}
   \|{\rm Ad}^k_{z}((L+M)^{-1})\|_{2\to 2}
   \lesssim
   (M-M_{0})^{-1-\f{k}{m_{1}}}+
   (M-M_{0})^{-1-\f{k}{m_{2}}}
  \end{equation}
  with a constant independent of $z,M$. 
\end{prop}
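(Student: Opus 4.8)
The plan is to derive \eqref{eq:L2AdR} from \eqref{eq2-assumption-0bis} by writing the resolvent as a Laplace transform of the heat semigroup and commuting the weight through the integral. Precisely, for $M>M_0$ one has
\[
  (L+M)^{-1}=\int_0^\infty e^{-tM}e^{-tL}\,dt,
\]
and since the commutators $\mathrm{Ad}_z^k$ are linear in the operator and the integrand is integrable in the operator norm (using \eqref{eq2-assumption-0bis} with $M_0$; recall $L+M_0\ge0$ so $\|e^{-tL}\|_{2\to2}\le e^{M_0 t}$), we may interchange $\mathrm{Ad}_z^k$ with the integral to get
\[
  \mathrm{Ad}_z^k\big((L+M)^{-1}\big)=\int_0^\infty e^{-tM}\,\mathrm{Ad}_z^k(e^{-tL})\,dt.
\]
Taking operator norms and invoking \eqref{eq2-assumption-0bis} — which gives $\|\mathrm{Ad}_z^k(e^{-tL})\|_{2\to2}\lesssim e^{M_0 t}2^{-k\nu}$ where $2^{-\nu}\sim t^{1/m_1}$ for $0<t<1$ and $2^{-\nu}\sim t^{1/m_2}$ for $t\ge1$ — we obtain
\[
  \|\mathrm{Ad}_z^k\big((L+M)^{-1}\big)\|_{2\to2}
  \lesssim
  \int_0^1 e^{-t(M-M_0)}t^{k/m_1}\,dt
  +
  \int_1^\infty e^{-t(M-M_0)}t^{k/m_2}\,dt.
\]

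The remaining work is just estimating these two elementary integrals uniformly in $z$ and $M>M_0$. Writing $\delta=M-M_0>0$, the first integral is bounded by $\int_0^\infty e^{-t\delta}t^{k/m_1}\,dt=\Gamma(1+k/m_1)\,\delta^{-1-k/m_1}$, which contributes the term $(M-M_0)^{-1-k/m_1}$; the second is likewise bounded by $\Gamma(1+k/m_2)\,\delta^{-1-k/m_2}$, giving $(M-M_0)^{-1-k/m_2}$. (One could split each integral at $t=1$ for a marginally sharper bound, but the stated inequality only asks for the sum of the two powers, so the crude bounds suffice.) This proves \eqref{eq:L2AdR} with a constant depending only on $n$ and the constant in \eqref{eq2-assumption-0bis}, and in particular independent of $z$ and $M$.

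The only point requiring a little care — and the main (very mild) obstacle — is justifying the interchange of $\mathrm{Ad}_z^k$ with the Laplace integral, i.e. that the Bochner integral $\int_0^\infty e^{-tM}e^{-tL}\,dt$ still represents $(L+M)^{-1}$ after applying the commutator operation, given that $\mathrm{Ad}_z^k(e^{-tL})$ is a priori only defined as an operator $\mathscr D(w)\to\mathscr D'(w)$. This is handled by testing against $f,g\in\mathscr D(w)$: for fixed such $f,g$ the scalar function $t\mapsto \langle \mathrm{Ad}_z^k(e^{-tL})f,g\rangle$ is measurable and dominated by $C e^{M_0 t}2^{-k\nu(t)}\|f\|_2\|g\|_2$, which is integrable against $e^{-tM}$ for $M>M_0$; Fubini then gives $\langle \mathrm{Ad}_z^k((L+M)^{-1})f,g\rangle=\int_0^\infty e^{-tM}\langle\mathrm{Ad}_z^k(e^{-tL})f,g\rangle\,dt$, and the bound above follows by duality together with the density of $\mathscr D(w)$ in $L^2(\mathcal X)$. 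Once this is in place, the proof is complete.
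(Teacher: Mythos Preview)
Your proof is correct and follows essentially the same route as the paper: represent the resolvent as $\int_0^\infty e^{-Mt}e^{-tL}\,dt$, pass the commutator inside, apply \eqref{eq2-assumption-0bis}, and estimate the two resulting integrals. Your additional paragraph justifying the interchange of $\mathrm{Ad}_z^k$ with the Bochner integral via testing on $\mathscr D(w)$ is a welcome point of rigor that the paper leaves implicit.
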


\begin{proof}%[of ...]
  By spectral calculus we can represent $R=(M+L)^{-1}$
  in the form
  \begin{equation*}
    \textstyle
    R=(M+L)^{-1}=
    \int_0^\vc  e^{-Mt}e^{-tL}dt
  \end{equation*}
  which implies
  \begin{equation*}
    \textstyle
    {\rm Ad}^{k}_{z}(R)=
    \int_{0}^{\infty}e^{-Mt} {\rm Ad}^{k}_{z}(e^{-tL})dt.
  \end{equation*}
  By assumption \eqref{eq2-assumption-0bis},
  since $2^{-\nu}\simeq t^{^{1/m_{2}}}$ for $t<1$
  and $2^{-\nu}\simeq t^{^{1/m_{1}}}$ for $t>1$,
  we obtain
  \begin{equation*}
    \textstyle
    \|{\rm Ad}^{k}_{z}(R)\|_{2\to2}
    \lesssim
    \int_{0}^{1}e^{(M_{0}-M)t}t^{k/m_{2}}dt
    +
    \int_{1}^{+\infty}e^{(M_{0}-M)t}t^{k/m_{1}}dt
  \end{equation*}
  and the claim follows easily.
\end{proof}

% \subsection{Reduction to \texorpdfstring{$L\ge0$}{}}
% \label{sub:positive}

% By Assumption \textbf{(L)} we know that 
% the spectrum of $L$ satisfies $\sigma(L)\subseteq[-M_{0},+\infty)$.
% Thus the operator $\widetilde{L}=L+M_{0}$ is
% non--negative, self--adjoint, and satisfies again conditions
% \eqref{eq1-assumption-0}, \eqref{eq2-assumption-0} but with
% $M_{0}=0$, i.e., the exponential factor $e^{M_{0}t}$
% is absent at the right hand side.
% We also notice that the estimate of Theorem \ref{the:second}
% applied to
% $\widetilde{\phi}(\widetilde{L})$, with
% $\widetilde{\phi}(s)=\phi(s-M_{0})$,
% gives the corresponding estimate for $\phi(L)$.
% In a similar way, the estimate of Theorem \ref{the:third} 
% applied to $\widetilde{\phi}(\widetilde{L})$ with
% $\widetilde{\phi}(s)=\phi(s-M_{0}2^{-k})$
% implies the corresponding estimates for $\phi( L)$,
% since the functions $\phi(s-M_{0}2^{-k})$ for $k\ge0$
% vary in a bounded subset of $C^{\infty}_{c}(\mathbb{R}^{n})$
% if $\phi$ does.

% Thus in the rest of the paper it will be sufficient
% to assume that $L$ is a non--negative self--adjoint operator,
% and that in Assumption \textbf{(L)} the constant $M_{0}$
% is equal to 0.

\subsection{Estimates for the heat semigroup}\label{sub:the_hea_ker}

The following result gives an estimate for the semigroups 
$e^{-tL}$ on almagam spaces which plays an important role in 
the sequel.

\begin{prop}
\label{prop-boundednessOfsemigroups}
For every $t>0$ we have
$$
\|e^{-tL}f\|_{L^{p_{0}}\to X^{p_{0},2}}\
\le C
% \begin{cases}
% t^{-\f{d}{m_1}(\f{1}{p_{0}}-\f{1}{2})},  \ \ & 0<t<1,\\
% t^{-\f{d}{m_2}(\f{1}{p_{0}}-\f{1}{2})},  \ \ & t\geq 1.
% \end{cases}
% t^{-\f{d}{m_1}(\f{1}{p_{0}}-\f{1}{2})}+
% t^{\f{d}{m_2}(\f{1}{p_{0}}-\f{1}{2})}.
% 1.
% \quad[[\text{or}\quad 
e^{M_{0}t}
\bigl(  
t^{-\f{n}{m_1}(\f{1}{p_{0}}-\f{1}{2})}+
t^{\f{n-\kappa}{m_2}(\f{1}{p_{0}}-\f{1}{2})}
\bigr)
% ???]]
$$
where $C$ depends only on the constants $C$ in assumption
\eqref{eq1-assumption-0} and $c_{1}$ in
\eqref{doublingpro}.
\end{prop}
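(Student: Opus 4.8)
The goal is to bound $\|e^{-tL}\|_{L^{p_0}\to X^{p_0,2}}$, i.e. to estimate $\sum_{Q\in\mathcal D_0}\|1_Q e^{-tL}f\|_2^{p_0}$ raised to the $1/p_0$. The natural strategy is to reduce the estimate from the ``scale $0$'' amalgam norm $X^{p_0,2}=X^{p_0,2}_0$ to the ``natural scale'' $\nu$ attached to $t$ (via $2^{-\nu}\simeq t^{1/m_1}$ for $t>1$, resp. $2^{-\nu}\simeq t^{1/m_2}$ for $t<1$), where Assumption \textbf{(L)} directly controls $e^{-tL}$ on $X^{1,p_0}_\nu\to X^{1,2}_\nu$ and $X^{1,2}_\nu\to X^{1,p_0'}_\nu$. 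So first I would insert a factorization $e^{-tL}=e^{-tL/2}\,e^{-tL/2}$ (or split the two mapping properties in \eqref{eq1-assumption-0}) to go $L^{p_0}\to X^{1,p_0}_\nu$-ish on the first factor and $X^{1,2}_\nu\to X^{p_0,2}_0$ on the second — but the cleanest route is: (i) trivially $\|f\|_{L^{p_0}}$ dominates a single-cube piece, so for a fixed cube $Q_0\in\mathcal D_\nu$ we have the local bound coming from \eqref{eq1-assumption-0}, then (ii) sum over the $O(2^{\nu n})$ sub-cubes of a unit cube and over unit cubes, paying a counting factor, and (iii) convert the resulting $X^{1,2}_\nu$ control into $X^{p_0,2}_0$ control by the embedding Proposition~\ref{prop-embedding} together with Hölder across the $\simeq 2^{\nu n}$ fine cubes inside each coarse cube.

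**Key steps in order.** Step 1: handle the case $t$ comparable to $1$ separately (finitely many $\nu$, trivial). Step 2: for $t$ away from $1$, pick $\nu$ as prescribed by \textbf{(L)}. Step 3: write $e^{-tL}=e^{-\frac t2 L}e^{-\frac t2 L}$; apply the first smoothing estimate in \eqref{eq1-assumption-0} to the right factor to map $X^{1,p_0}_\nu\to X^{1,2}_\nu$ (up to the $e^{M_0 t}$ and the $(t_{>1}^{n/m_1}+t_{\le 1}^{\kappa/m_2})^{1/2-1/p_0}$ gain — careful: $t/2\simeq t$ so $\nu$ is unchanged up to $O(1)$). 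Step 4: estimate the left factor $L^{p_0}\to X^{1,p_0}_\nu$: here $L^{p_0}=X^{p_0,p_0}_0$ and by Hölder within each coarse cube, $\|g\|_{X^{1,p_0}_\nu}\lesssim 2^{\nu n(1-1/p_0)}\|g\|_{X^{p_0,p_0}_\nu}$, and $X^{p_0,p_0}_\nu=X^{p_0,p_0}_0=L^{p_0}$, so the left factor costs $2^{\nu n(1/p_0'-?)}$ — I need to track this exponent to confirm it combines with the $t$-power from \eqref{eq1-assumption-0} to give exactly $t^{-\frac{n}{m_1}(1/p_0-1/2)}+t^{\frac{n-\kappa}{m_2}(1/p_0-1/2)}$. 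Step 5: on the output side, convert $X^{1,2}_\nu$ into $X^{p_0,2}_0$: since $p_0\ge 1$ we actually have $X^{1,2}_\nu\hookrightarrow X^{p_0,2}_\nu$ trivially (summing $p_0$-th powers is smaller), and then $X^{p_0,2}_\nu\hookrightarrow X^{p_0,2}_0$ by Proposition~\ref{prop-embedding} with constant $\lesssim 1+2^{\nu n(1/p_0-1/2)}$; fold this last factor into the count. Step 6: assemble all powers of $2^\nu\simeq t^{-1/m_1}$ (for $t>1$) or $t^{-1/m_2}$ (for $t<1$), using the doubling/reverse-doubling relation only implicitly through the stated $\kappa$-dependence — in fact the $\kappa$ enters precisely because \eqref{eq1-assumption-0} already has $t_{\le1}^{\kappa/m_2}$ rather than $t_{\le1}^{n/m_2}$, and the extra $t^{-n/m_2}$-type loss from the cube counting partially cancels it, leaving $t^{(n-\kappa)/m_2(1/p_0-1/2)}$.

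**Main obstacle.** The delicate bookkeeping is in Steps 4–6: making sure the exponents of $2^\nu$ arising from (a) the Hölder step $L^{p_0}\to X^{1,p_0}_\nu$, (b) the gain in \eqref{eq1-assumption-0}, and (c) the scale-change embedding $X^{p_0,2}_\nu\to X^{p_0,2}_0$ all add up to exactly the claimed exponent, with the correct split between the $t>1$ regime (dimension $n$, rate $m_1$) and the $t<1$ regime (the ``local dimension'' $\kappa$, rate $m_2$). In particular one must be careful that in the small-time regime only the $\kappa$-power survives, which is where the reverse doubling hypothesis \eqref{RD-eq} is morally used — though here it is hidden inside the hypothesis \eqref{eq1-assumption-0}. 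A secondary (routine) point is justifying the semigroup splitting $e^{-tL}=e^{-\frac t2L}e^{-\frac t2L}$ and that replacing $t$ by $t/2$ changes $\nu$ by $O(1)$, hence changes constants only; since $e^{M_0 t/2}\le e^{M_0 t}$ the exponential factor is harmless. I would conclude by just collecting terms, noting $C$ depends only on the constant in \eqref{eq1-assumption-0} and on $c_1$ through the cube-counting Remark~\ref{rem1}(b) and Proposition~\ref{prop-embedding}.
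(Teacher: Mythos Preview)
Your plan contains a genuine gap at Step~4. The inequality you assert there,
\[
\|g\|_{X^{1,p_0}_\nu}\lesssim 2^{\nu n(1-1/p_0)}\|g\|_{X^{p_0,p_0}_\nu}=2^{\nu n/p_0'}\|g\|_{L^{p_0}},
\]
is false. The H\"older step you describe is valid only \emph{inside a fixed coarse cube}: for $Q_0\in\mathcal D_0$ and $\nu>0$ one indeed has $\sum_{Q\in\mathcal D_\nu,\,Q\subset Q_0}\|g\|_{L^{p_0}(Q)}\lesssim 2^{\nu n/p_0'}\|g\|_{L^{p_0}(Q_0)}$. But summing this over the (infinitely many) $Q_0\in\mathcal D_0$ produces $\|g\|_{X^{1,p_0}_0}$ on the right, not $\|g\|_{L^{p_0}}$; and there is no embedding $L^{p_0}=X^{p_0,p_0}_0\hookrightarrow X^{1,p_0}_0$, since $\ell^{p_0}\not\hookrightarrow\ell^1$ over an infinite index set. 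For the same reason there is no way to feed a generic $L^{p_0}$ function into the hypothesis $\|e^{-tL}\|_{X^{1,p_0}_\nu\to X^{1,2}_\nu}$, with or without the semigroup factorisation $e^{-tL}=e^{-tL/2}e^{-tL/2}$.

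The missing idea is \emph{duality plus interpolation in the outer amalgam index}. The second half of \eqref{eq1-assumption-0}, namely the bound on $\|e^{-tL}\|_{X^{1,2}_\nu\to X^{1,p_0'}_\nu}$, gives by duality (self-adjointness of $e^{-tL}$) the same bound for $\|e^{-tL}\|_{X^{\infty,p_0}_\nu\to X^{\infty,2}_\nu}$. Interpolating this with the first half of \eqref{eq1-assumption-0} yields
\[
\|e^{-tL}\|_{X^{p,p_0}_\nu\to X^{p,2}_\nu}\le Ce^{M_0 t}\bigl(t_{>1}^{n/m_1}+t_{\le1}^{\kappa/m_2}\bigr)^{1/2-1/p_0}
\qquad\text{for every }1\le p\le\infty.
\]
Taking $p=p_0$ and using $X^{p_0,p_0}_\nu=L^{p_0}$ gives directly the bound $\|e^{-tL}\|_{L^{p_0}\to X^{p_0,2}_\nu}$; then Proposition~\ref{prop-embedding} (essentially your Step~5, which is correct) converts $X^{p_0,2}_\nu$ to $X^{p_0,2}$ at the cost of the factor $1+2^{-\nu n(1/p_0-1/2)}$, and the bookkeeping of powers of $2^{\nu}$ is immediate. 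No semigroup splitting is needed.
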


\begin{proof}%[of ...]
  By redefining $\widetilde{L}=L+M_{0}$, we see that it is
  not restrictive to assume $M_{0}=0$.
  Now fix $\nu\in\mathbb{Z}$ and $t>0$ such that
  either $2^{-\nu}\leq t^{1/m_1}<2^{-\nu+1}, 0<t<1$ 
  or $2^{-\nu}\leq t^{1/m_2}<2^{-\nu+1}, t\geq 1$.
  By assumption \eqref{eq1-assumption-0}, using duality we have
  \begin{equation*}
    \|e^{-tL}\|_{X_{\nu}^{\infty,p_{0}}\to X_{\nu}^{\infty,2}}\le
    C \bigl(
    2^{\nu \kappa(\f{1}{p_0}-\f{1}{2})}
    +2^{\nu n(\f{1}{p_0}-\f{1}{2})}\bigr)
  \end{equation*}
  and interpolating with \eqref{eq1-assumption-0}
  we have, for all $1\le p\le \infty$,
  \begin{equation*}
    \|e^{-tL}\|_{X_{\nu}^{p,p_{0}}\to X_{\nu}^{p,2}}\le
    C \bigl(
    2^{\nu \kappa(\f{1}{p_0}-\f{1}{2})}
    +2^{\nu n(\f{1}{p_0}-\f{1}{2})}\bigr)
  \end{equation*}
  We choose $p=p_{0}$ and notice that
  $X_{\nu}^{p_{0},p_{0}}=L^{p_{0}}$; thus we have proved
  \begin{equation*}
    \|e^{-tL}\|_{L^{p_{0}}\to X_{\nu}^{p_{0},2}}\le
    C \bigl(
    2^{\nu \kappa(\f{1}{p_0}-\f{1}{2})}
    +2^{\nu n(\f{1}{p_0}-\f{1}{2})}\bigr)
  \end{equation*}
  By the embedding in Proposition \ref{prop-embedding}
  this implies
  \begin{equation*}
  \begin{split}
    \|e^{-tL}\|_{L^{p_{0}}\to X^{p_{0},2}}\le
    &
    C \bigl(
    2^{\nu \kappa(\f{1}{p_0}-\f{1}{2})}
    +2^{\nu n(\f{1}{p_0}-\f{1}{2})}\bigr)
    \bigl(1+2^{-\nu n(\f{1}{p_0}-\f{1}{2})}\bigr)
    \\
    &
    \simeq
    2^{\nu (\kappa-n)(\f{1}{p_0}-\f{1}{2})}
    +2^{\nu n(\f{1}{p_0}-\f{1}{2})}
    \end{split}
  \end{equation*}
  and recalling the conditions on $t$, we obtain the claim.
\end{proof}

As a consequence we obtain the following result.

\begin{prop} \label{prop-boundednessofRgamma}
  Let  $M>M_{0}$ and
  $\gamma=\f{n}{m_1}(\f{1}{p_0}-\f{1}{2})+\epsilon$,
  with $\epsilon>0$. Then
  $$
  \textstyle
  \|(M+L)^{-\gamma}f\|_{X^{p_0,2}}\le
  C
  \Bigl(
    \epsilon ^{-1}
    +
    (M-M_{0})
    ^{\gamma+\f{n-\kappa}{m_2}(\f{1}{p_{0}}-\f{1}{2})}
  \Bigr)
   \|f\|_{p_0}
  $$
  where $C$ depends only on the constants $C$ in assumption
  \eqref{eq1-assumption-0} and $c_{1}$ in
  \eqref{doublingpro}.
\end{prop}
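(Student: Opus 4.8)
The plan is to realise the fractional power $(M+L)^{-\gamma}$ as a Laplace (subordination) integral of the heat semigroup and then to integrate the amalgam-space smoothing bound of Proposition~\ref{prop-boundednessOfsemigroups}. Since $L+M_{0}\ge0$ and $M>M_{0}$, the self-adjoint operator $M+L$ has spectrum contained in $[M-M_{0},+\vc)\subset(0,+\vc)$, so the spectral theorem gives
\begin{equation*}
  (M+L)^{-\gamma}=\frac{1}{\Gamma(\gamma)}\int_{0}^{\vc}t^{\gamma-1}e^{-tM}\,e^{-tL}\,dt,
\end{equation*}
an identity valid in $L^{2}(\mathcal X)$; since $X^{p_{0},2}\hookrightarrow L^{2}(\mathcal X)$, one transfers it to the amalgam space by first restricting $f$ to a dense subclass, or by a routine limiting argument using the a priori bound obtained below.

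First I would take $X^{p_{0},2}$ norms, move them under the integral sign, and invoke Proposition~\ref{prop-boundednessOfsemigroups}; writing $e^{-tM}e^{M_{0}t}=e^{-(M-M_{0})t}$ reduces the claim to the scalar inequality
\begin{equation*}
  \frac{1}{\Gamma(\gamma)}\int_{0}^{\vc}t^{\gamma-1}e^{-(M-M_{0})t}
  \Bigl(t^{-a}+t^{b}\Bigr)\,dt
  \;\les\;\epsilon^{-1}+(M-M_{0})^{-\gamma-b},
  \qquad a:=\tfrac{n}{m_{1}}\bigl(\tfrac{1}{p_{0}}-\tfrac12\bigr),\quad b:=\tfrac{n-\kappa}{m_{2}}\bigl(\tfrac{1}{p_{0}}-\tfrac12\bigr),
\end{equation*}
in which $\Gamma(\gamma)^{-1}$ is harmless because $\gamma\ge a$ is bounded below by a fixed positive constant. (Here I have written the negative power of $M-M_{0}$ that the integral actually produces; it is $\ge1$ only as $M\downarrow M_{0}$, and for $M-M_{0}$ bounded below it is controlled by the power recorded in the statement.)

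Next I would split $\int_{0}^{\vc}=\int_{0}^{1}+\int_{1}^{\vc}$. On $(0,1)$ one has $e^{-(M-M_{0})t}\le1$ and $t^{-a}+t^{b}\le2t^{-a}$, so this part is $\les\int_{0}^{1}t^{\gamma-1-a}\,dt=\int_{0}^{1}t^{\epsilon-1}\,dt=\epsilon^{-1}$, using $\gamma-a=\epsilon$; this is precisely where the hypothesis $\epsilon>0$ enters, to secure integrability at $t=0$. On $(1,+\vc)$ one has $t^{-a}+t^{b}\le2t^{b}$, so extending the range back to $(0,+\vc)$ and using $\int_{0}^{\vc}t^{\sigma-1}e^{-\delta t}\,dt=\Gamma(\sigma)\delta^{-\sigma}$ with $\delta=M-M_{0}$ and $\sigma=\gamma+b$, this part is $\les(M-M_{0})^{-\gamma-b}$. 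Adding the two contributions yields the estimate.

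The argument is essentially routine: this Proposition is just the ``integrated'' form of Proposition~\ref{prop-boundednessOfsemigroups}, and there is no deep obstacle. The only points deserving a little care are the justification of the subordination identity \emph{directly} in $X^{p_{0},2}$ (hence the passage through a dense subclass, or through the embedding into $L^{2}$); the endpoint bookkeeping, where $\epsilon>0$ provides integrability at $t=0$ and the factor $e^{-(M-M_{0})t}$ provides it at $t=+\vc$ (the semigroup bound itself growing polynomially there when $\kappa<n$); and checking that the constant is free of the inessential parameters, so that the whole $\epsilon$-dependence sits in the explicit $\epsilon^{-1}$.
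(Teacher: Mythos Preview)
Your argument is correct and is exactly the paper's approach: the paper's proof consists of the single sentence ``apply Minkowski's inequality and Proposition~\ref{prop-boundednessOfsemigroups} to the standard representation $(M+L)^{-\gamma}=\Gamma(\gamma)^{-1}\int_{0}^{\infty}t^{\gamma}e^{-Mt}e^{-tL}\,dt/t$'', and your computation fills in precisely those details. Your observation about the sign of the exponent is also well taken: the integral genuinely produces $(M-M_{0})^{-\gamma-b}$, not $(M-M_{0})^{+\gamma+b}$, so the stated exponent appears to be a typo (harmless downstream, since the proposition is only ever invoked with a fixed $M_{1}>M_{0}$).
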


\begin{proof}
  It is sufficient to apply Minkowski's inequality and
  Proposition \ref{prop-boundednessOfsemigroups} to 
  the standard representation
  \begin{equation}\label{eq:resolvheat}
    (M+L)^{-\gamma}=
    \f{1}{\Gamma(\gamma)}\int_0^\vc t^\gamma e^{-Mt}e^{-tL}\f{dt}{t}.
  \end{equation}
\end{proof}

\subsection{Estimate of \texorpdfstring{$\varphi(L)$}{}}
\label{sub:est_of_tex}

We shall now prove that if $\varphi$ is in a suitable
weighted Sobolev space then
$\varphi(L)$ is bounded on $L^{p}$.
The proof will be achieved through a series of Lemmas, some of which
are of independent interest.

In the following, $L$ is an operator satisfying Assumption 
\textbf{(L)},
and we can take $R$ as the resolvent operator
\begin{equation*}
  R=(M_{1}+L)^{-1}
\end{equation*}
with $M_{1}>M_{0}$ as in \textbf{(L)}.

\begin{lem}[]\label{lem:eixiR}
  We have the estimate
  \begin{equation*}
    \|e^{-i \xi R}f\|_{X^{p_{0},2}}\le
    c(n)
    C(1+|\xi|)^{n(1/p_{0}-1/2)}\|f\|_{_{X^{p_{0},2}}},
    \qquad
    \xi\in \mathbb{R}
  \end{equation*}
  where $C$ is the constant in assumption \eqref{eq2-assumption-0}
  and $c(n)$ depends only on $n$.
\end{lem}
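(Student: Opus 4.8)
The plan is to exploit the spectral properties of the bounded self-adjoint operator $R=(M_1+L)^{-1}$ together with the commutator bound \eqref{eq2-assumption-0} and Theorem \ref{thm-commutator}. First I would observe that since $L+M_0\ge 0$ and $M_1>M_0$, the spectrum of $R$ is contained in a bounded interval $(0, (M_1-M_0)^{-1}]$, so $e^{-i\xi R}$ is a well-defined bounded operator on $L^2(\mathcal{X})$ with $\|e^{-i\xi R}\|_{2\to 2}=1$ for all $\xi\in\mathbb{R}$. The key point is to control the commutators ${\rm Ad}_x^k(e^{-i\xi R})$ and then apply Theorem \ref{thm-commutator} with $T=e^{-i\xi R}$, which will yield precisely the bound $\|e^{-i\xi R}\|_{X^{p_0,2}\to X^{p_0,2}}\lesssim B_0^{n(1/p_0-1/2)}$ once we know $\|{\rm Ad}_x^k(e^{-i\xi R})\|_{2\to 2}\le B_0^k$ for $0\le k\le \lfloor n/2\rfloor+1$ with $B_0\simeq C(1+|\xi|)$.

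Next I would estimate the commutators. The standard Leibniz-type expansion (as in \cite{JensenNakamura95-a}) expresses ${\rm Ad}_x^k(e^{-i\xi R})$ as a finite sum of terms, each a product of factors of the form ${\rm Ad}_x^{k_j}(R)$ interleaved with copies of $e^{-i\xi R}$ and with a combinatorial/integral weight coming from differentiating $e^{-i\xi R}$: writing $\frac{d}{d\xi}e^{-i\xi R}=-iRe^{-i\xi R}$ and using Duhamel-type iteration, one gets a sum over compositions $k_1+\cdots+k_j=k$ with $j$ commutator factors and a factor $|\xi|^j$. More precisely, ${\rm Ad}_x(e^{-i\xi R}) = -i\int_0^\xi e^{-i(\xi-\sigma)R}\,{\rm Ad}_x(R)\,e^{-i\sigma R}\,d\sigma$ (since ${\rm Ad}_x$ is a derivation and $e^{-i\xi R}$ is generated by $-iR$), and iterating this identity $k$ times produces an iterated integral over a simplex of volume $\lesssim |\xi|^k/k!$ whose integrand is a product of bounded factors: the semigroup pieces have norm $1$ and each ${\rm Ad}_x^{k_j}(R)$ has norm $\le C$ by \eqref{eq2-assumption-0}. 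Summing over the finitely many combinatorial configurations (their number depends only on $k\le n$) gives $\|{\rm Ad}_x^k(e^{-i\xi R})\|_{2\to 2}\lesssim C^k(1+|\xi|)^k \le B_0^k$ with $B_0=c(n)C(1+|\xi|)$.

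Then I would invoke Theorem \ref{thm-commutator} directly: with $T=e^{-i\xi R}$, $\|T\|_{2\to 2}=1$, and $B_0=c(n)C(1+|\xi|)\ge 1$ (enlarging the constant if necessary so $B_0\ge 1$), we obtain $\|e^{-i\xi R}\|_{X^{p_0,2}\to X^{p_0,2}}\le C' B_0^{n(1/p_0-1/2)}= c(n)C(1+|\xi|)^{n(1/p_0-1/2)}$, which is the claimed estimate. One subtlety to address carefully is the domain of definition: the commutator identities and the derivation property of ${\rm Ad}_x$ should be justified on the dense subspace $\mathscr{D}(w)$ of $L^2$, using property (ii) of $\mathscr{D}(w)$ (stability under multiplication by powers of $w(x,\cdot)$), and then the $X^{p_0,2}$ bound extends by density exactly as in the proof of Theorem \ref{thm-commutator}.

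The main obstacle I expect is the bookkeeping in the commutator expansion: one must verify that the iterated Duhamel representation of ${\rm Ad}_x^k(e^{-i\xi R})$ is valid as an operator identity (convergence of the integrals in the strong operator topology, interchanging ${\rm Ad}_x$ with the integral), and that when a commutator of order $\ge 1$ lands on a product, the Leibniz rule distributes it correctly over the several factors—so the right count is a sum over ordered partitions of $k$ into at most $k$ parts, each part $\le \lfloor n/2\rfloor+1$ automatically since each part is $\le k\le \lfloor n/2\rfloor+1$. Once the structure of this sum is pinned down, every individual factor is bounded by hypothesis \eqref{eq2-assumption-0} and the only $\xi$-dependence is the overall $|\xi|^j\le (1+|\xi|)^k$, so the estimate follows without any further analytic input.
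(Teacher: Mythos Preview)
Your proposal is correct and follows essentially the same approach as the paper: derive the Duhamel identity ${\rm Ad}_{z}(e^{-i\xi R})=-i\int_{0}^{\xi}e^{-isR}{\rm Ad}_{z}(R)e^{-i(\xi-s)R}\,ds$, iterate/induct to get $\|{\rm Ad}^{k}_{z}(e^{-i\xi R})\|_{2\to 2}\lesssim (1+|\xi|)^{k}$ for $0\le k\le\lfloor n/2\rfloor+1$, and then invoke Theorem~\ref{thm-commutator}. The paper's version is slightly more terse (it simply says ``proceeding by induction'' rather than unpacking the sum over ordered partitions), but the argument is the same.
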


\begin{proof}%[of ...]
  From 
  \begin{equation*}
    e^{-i\xi R}w_{z}(\cdot)e^{i\xi R}-w_{z}(\cdot)=
    \int_{0}^{\xi}\partial_{s}(e^{-is R}w_{z}(\cdot)e^{is R})ds
  \end{equation*}
  we obtain the formula
  \begin{equation*}
    {\rm Ad}_{z}(e^{-i \xi R})=
    -i\int_{0}^{\xi}
    e^{-is R}{\rm Ad}_{z}(R)e^{-i(\xi-s)R}ds
  \end{equation*}
  and by \eqref{eq2-assumption-0} we get
  \begin{equation*}
    \|{\rm Ad}^{1}_{z}(e^{-i\xi R})\|_{2\to 2}\leq C|\xi|,
  \end{equation*}
  Using repeatedly this identity and
  proceeding by induction we obtain
  \begin{equation*}
    \|{\rm Ad}^{k}_{z}(e^{-i\xi R})\|_{2\to 2}\leq C(1+|\xi|)^{k},
    \qquad
    k=0,\dots,\lfloor n/2 \rfloor+1
  \end{equation*}
  uniformly in $z\in \mathcal{X}$,
  and by Theorem \ref{thm-commutator} we obtain the claim.
\end{proof}

\begin{lem}[]\label{lem:psiR}
  For any sufficiently smooth function $\psi$ on $\mathbb{R}$
  we have the estimate
  \begin{equation}\label{eq:estpsiR}
    \|\psi(R)f\|_{X^{p_{0},2}}\le
    c(n)
    C\|(1+|\xi|)^{n(1/p_{0}-1/2)}\widehat{\psi}(\xi)\|_{L^{1}} 
    \|f\|_{X^{p_{0},2}}
  \end{equation}
  with $c(n),C$ as in Lemma \ref{lem:eixiR}.
\end{lem}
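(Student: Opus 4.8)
The plan is to write $\psi(R)$ as a superposition of the unitary-type operators $e^{-i\xi R}$ via the Fourier inversion formula, and then apply Lemma \ref{lem:eixiR} together with Minkowski's integral inequality. First I would write, for $\psi$ with sufficient decay and smoothness so that $\widehat\psi\in L^1$ (and in fact $(1+|\xi|)^{n(1/p_0-1/2)}\widehat\psi\in L^1$),
\begin{equation*}
  \psi(R)=\frac{1}{2\pi}\int_{\mathbb R}\widehat\psi(\xi)\,e^{-i\xi R}\,d\xi,
\end{equation*}
which is justified by the spectral theorem since $R=(M_1+L)^{-1}$ is a bounded self-adjoint operator and $\psi$ agrees with its Fourier representation on the (compact) spectrum of $R$. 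Applying this identity to $f\in X^{p_0,2}$ and taking $X^{p_0,2}$ norms, Minkowski's integral inequality gives
\begin{equation*}
  \|\psi(R)f\|_{X^{p_0,2}}\le\frac{1}{2\pi}\int_{\mathbb R}|\widehat\psi(\xi)|\,\|e^{-i\xi R}f\|_{X^{p_0,2}}\,d\xi.
\end{equation*}

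Next I would insert the bound from Lemma \ref{lem:eixiR}, namely $\|e^{-i\xi R}f\|_{X^{p_0,2}}\le c(n)C(1+|\xi|)^{n(1/p_0-1/2)}\|f\|_{X^{p_0,2}}$, pull the constant out of the integral, and obtain exactly
\begin{equation*}
  \|\psi(R)f\|_{X^{p_0,2}}\le c(n)C\,\Big(\frac{1}{2\pi}\int_{\mathbb R}(1+|\xi|)^{n(1/p_0-1/2)}|\widehat\psi(\xi)|\,d\xi\Big)\|f\|_{X^{p_0,2}},
\end{equation*}
which is the claimed estimate (after absorbing the harmless $1/2\pi$ into $c(n)$, and with the understanding that "sufficiently smooth" means precisely that the weighted $L^1$ norm of $\widehat\psi$ on the right-hand side is finite).

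The only genuine point requiring care — the potential main obstacle — is the justification of the operator-valued Fourier representation and the interchange of the integral with the $X^{p_0,2}$ norm. For the first, one uses that $R$ is bounded self-adjoint, so $\sigma(R)\subset[-\|R\|,\|R\|]$ is compact, and on this interval $\psi(\lambda)=\frac{1}{2\pi}\int\widehat\psi(\xi)e^{-i\xi\lambda}d\xi$ pointwise; the spectral theorem then yields the operator identity with convergence, say, in the strong operator topology on $L^2$, which suffices since $X^{p_0,2}\hookrightarrow L^2$ and the estimate we prove is a priori on a dense class (e.g.\ $\mathscr D(w)$, or functions supported in finitely many dyadic cubes) and then extended by density. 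For the Minkowski step, one first restricts to $\xi$ in a bounded interval — where everything is a genuine $L^2$-valued continuous integrand — uses the cube-by-cube definition \eqref{eq:defamalgam} of the $X^{p_0,2}$ norm together with the triangle inequality in $\ell^{p_0}$ and in $L^2(Q)$, and then passes to the limit using the integrability of $(1+|\xi|)^{n(1/p_0-1/2)}\widehat\psi$. No new ideas beyond Lemma \ref{lem:eixiR} are needed; the lemma is essentially a packaging statement converting the exponential bound into a functional-calculus bound.
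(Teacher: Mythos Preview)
Your proof is correct and follows exactly the same approach as the paper: write $\psi(R)$ via the Fourier inversion formula as a superposition of the operators $e^{-i\xi R}$ and apply Lemma~\ref{lem:eixiR} together with Minkowski's inequality. The paper's proof is in fact just a two-line sketch of precisely this argument, so your additional care in justifying the operator-valued integral and the norm interchange only makes the reasoning more complete.
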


\begin{proof}%[of ...]
  It is sufficient to use the identity
  \begin{equation*}
    \psi(R)=(2\pi)^{-1}\int e^{i \xi R}\widehat{\psi}(\xi)d\xi
  \end{equation*}
  and apply the previous result.
\end{proof}

We introduce a seminorm for functions
$\psi:\mathbb{R}\to \mathbb{C}$, depending on
the constant $M_{1}\ge0$ and on the integer $N\ge0$:
\begin{equation*}
  \textstyle
  \bbar \psi\bbar_{N}:=
  \|\psi\| _{L^{2}(-M_{1},\infty)}
  +
  \sum_{j=0}^{N}
  \|(\lambda+M_{1})^{j+N}\partial^{j}\psi(\lambda)\|
  _{L^{2}(-M_{1},\infty)}.
\end{equation*}

\begin{lem}[]\label{lem:phiL}
  Let $N=\lfloor n/p_{0}\rfloor+1$
  and $\psi:\mathbb{R}\to \mathbb{C}$. Then we have
  \begin{equation}\label{eq:estphiL}
    \|(L+M_{1})^{2}\psi(L)f\|_{X^{p_{0},2}}\le 
    c(n)C \bbar \psi\bbar_{N}\cdot
    \|f\|_{X^{p_{0},2}}
  \end{equation}
  with $c(n),C$ as in Lemma \ref{lem:eixiR}.
\end{lem}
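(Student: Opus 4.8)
The plan is to reduce the operator $(L+M_1)^2\psi(L)$ to a function of the resolvent $R=(M_1+L)^{-1}$ and then invoke Lemma \ref{lem:psiR}. Write $R=(M_1+L)^{-1}$, so that $L+M_1=R^{-1}$ and the spectrum of $R$ lies in a bounded interval $(0,(M_1-M_0)^{-1}]$ (since $L+M_0\ge 0$ forces $L+M_1\ge M_1-M_0>0$). Define $\Psi:\mathbb{R}\to\mathbb{C}$ by $\Psi(r)=r^{-2}\psi(r^{-1}-M_1)$ for $r>0$ (and $\Psi(r)=0$ for $r\le 0$), so that formally $(L+M_1)^2\psi(L)=\Psi(R)$ by the spectral theorem. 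Then by Lemma \ref{lem:psiR} it suffices to bound $\|(1+|\xi|)^{n(1/p_0-1/2)}\widehat{\Psi}(\xi)\|_{L^1}$ by $c(n)\bbar\psi\bbar_N$.

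First I would make the change of variables $\lambda=r^{-1}-M_1$, i.e.\ $r=(\lambda+M_1)^{-1}$, which carries the relevant part of the real line $r\in(0,(M_1-M_0)^{-1}]$ onto $\lambda\in[M_0-M_1,\infty)\supset(-M_1,\infty)$; this is exactly the interval appearing in the definition of $\bbar\cdot\bbar_N$. Under this substitution one computes $\Psi(r)=r^{-2}\psi(\lambda)$ and, by the chain rule, each derivative $\partial_r^j\Psi$ is a finite linear combination of terms $r^{-2-j-i}\partial_\lambda^i\psi(\lambda)$ with $0\le i\le j$, i.e.\ of $(\lambda+M_1)^{2+j+i}\partial_\lambda^i\psi(\lambda)$. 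Since $N=\lfloor n/p_0\rfloor+1\ge 2$ (as $p_0\in[1,2)$ and $n>0$; in fact $n/p_0>n/2$, and in all cases $N\ge 2$ because $n\ge 1$ would give $\lfloor n/p_0\rfloor\ge 1$ — if instead $n$ can be very small one should simply enlarge $N$ or note $2+j+i\le j+2N$ when $j\le N$), the weights $(\lambda+M_1)^{2+j+i}$ with $i\le j\le N$ are dominated by $(\lambda+M_1)^{j+N}$, and hence
\begin{equation*}
  \|\Psi\|_{L^2(0,\infty)}+\sum_{j=0}^{N}\|\partial_r^j\Psi\|_{L^2(0,\infty)}\lesssim \|\psi\|_{L^2(-M_1,\infty)}+\sum_{j=0}^{N}\sum_{i=0}^{j}\|(\lambda+M_1)^{j+N}\partial_\lambda^i\psi(\lambda)\|_{L^2(-M_1,\infty)}\lesssim \bbar\psi\bbar_N,
\end{equation*}
where in the $L^2$ change of variables one picks up a Jacobian $dr=(\lambda+M_1)^{-2}\,d\lambda$ whose square root is absorbed harmlessly into the polynomial weights. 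Thus $\Psi\in H^N(\mathbb{R})$ with $\|\Psi\|_{H^N}\lesssim\bbar\psi\bbar_N$.

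Finally, I would pass from the $H^N$ bound on $\Psi$ to the weighted $L^1$ bound on $\widehat\Psi$ required by Lemma \ref{lem:psiR}. Since $n(1/p_0-1/2)<n/p_0<N$, by Cauchy--Schwarz
\begin{equation*}
  \|(1+|\xi|)^{n(1/p_0-1/2)}\widehat\Psi(\xi)\|_{L^1}\le \|(1+|\xi|)^{-N+n(1/p_0-1/2)}\|_{L^2}\,\|(1+|\xi|)^N\widehat\Psi\|_{L^2}\lesssim \|\Psi\|_{H^N}\lesssim\bbar\psi\bbar_N,
\end{equation*}
the first $L^2$ norm being finite precisely because $N-n(1/p_0-1/2)>1/2$ (indeed $N>n/p_0\ge n(1/p_0-1/2)+1/2$ when $p_0\le 2$). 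Feeding this into Lemma \ref{lem:psiR} with $\psi$ there replaced by $\Psi$ yields $\|\Psi(R)f\|_{X^{p_0,2}}\le c(n)C\bbar\psi\bbar_N\|f\|_{X^{p_0,2}}$, which is \eqref{eq:estphiL}. The main point requiring care — the one genuine obstacle — is the bookkeeping at the endpoint $r=0$ (equivalently $\lambda\to\infty$): one must check that $\Psi$ and its first $N$ derivatives extend by $0$ past $r=0$ without creating singularities, which is exactly what the weights $(\lambda+M_1)^{j+N}$ in $\bbar\psi\bbar_N$ guarantee via the decay they impose on $\psi$ and its derivatives at infinity; everywhere else the spectral calculus and the change of variables are routine.
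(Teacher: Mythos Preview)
Your argument is essentially identical to the paper's: you define the same auxiliary function $\Psi$ (the paper calls it $\rho$), reduce $(L+M_1)^2\psi(L)$ to $\Psi(R)$, invoke Lemma~\ref{lem:psiR}, and then bound $\|(1+|\xi|)^{n(1/p_0-1/2)}\widehat\Psi\|_{L^1}$ by $\|\Psi\|_{H^N}$ via Cauchy--Schwarz and control the latter through the change of variables $r=(\lambda+M_1)^{-1}$. The only wobble is the line claiming $(\lambda+M_1)^{2+j+i}\lesssim(\lambda+M_1)^{j+N}$ for all $i\le j\le N$, which fails when $i>N-2$ and $\lambda+M_1>1$; but once the Jacobian $(\lambda+M_1)^{-1}$ is absorbed the exponents line up with the seminorm at the same level of precision as in the paper's own computation, so this is bookkeeping rather than a genuine gap.
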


\begin{proof}%[of ...]
  Define $\rho(\xi):=0$ for $\xi\le0$, and
  \begin{equation*}
    \textstyle
    \rho(\xi):=\xi^{-2}\cdot\psi(\frac1\xi-M_{1})
    \quad\text{for}\quad 
    \xi>0
  \end{equation*}
  and note that 
  $(\lambda+M_{1})^{2}\psi(\lambda)=\rho((M_{1}+\lambda)^{-1})$ for
  $\lambda$ in the spectrum of $L$, so that 
  $(L+M_{1})^{2}\psi(L)=\rho(R)$. By the previous result we get
  \begin{equation*}
    \|(L+M_{1})^{2}\psi(L)f\|_{X^{p_{0},2}}
    \le c(n)C
    \|(1+|\xi|)^{n(1/p_{0}-1/2)}\widehat{\rho}(\xi)\|_{L^{1}}
    \|f\|_{X^{p_{0},2}}.
  \end{equation*}
  It remains to estimate the norm of $\rho$.
  We proceed as follows:
  \begin{equation*}
    \|(1+|\xi|)^{n(1/p_{0}-1/2)}\widehat{\rho}(\xi)\|_{L^{1}}
    \lesssim
    \|(1+|\xi|)^{N}\widehat{\rho}\|_{L^{2}}=
    \|\rho\|_{H^{N}(\mathbb{R}^{+})}.
  \end{equation*}
  We note the elementary identity for $\xi>0$,
  $k\ge0$
  \begin{equation*}
    \textstyle
    \partial^{k}_{\xi}\rho(\xi)=
    \sum_{j=0}^{k}
    c_{j,k} \cdot
    \partial^{j}\psi(\frac1\xi-M_{1})
    \cdot
    \xi^{-(j+k+2)}
  \end{equation*}
  (for suitable constants $c_{j,k}$). This gives
  \begin{equation*}
    \textstyle
    \|\partial^{k}_{\xi}\rho\|_{L^{2}(0,\infty)}
    \le
    c(n)
    \sum_{j=0}^{k}
    \|(\lambda+M_{1})^{j+k}
      \partial^{j}\psi(\lambda)\|_{L^{2}(-M_{1},\infty)}.
  \end{equation*}
  Using the last estimate for $k=0$ and $k=N$ we obtain
  \begin{equation*}
    \textstyle
    \|\rho\|_{H^{N}(\mathbb{R}^{+})}
    \le
    c(n)\|\psi\| _{L^{2}(-M_{1},\infty)}
    +
    c(n)
    \sum_{j=0}^{N}
    \|(\lambda+M_{1})^{j+N}\partial^{j}\psi(\lambda)\|
    _{L^{2}(-M_{1},\infty)}
  \end{equation*}
  and we obtain the claim.
\end{proof}

\begin{lem}[]\label{lem:phiL2}
  Let $N= \lfloor n/p_{0}\rfloor+1$, $\beta\ge0$ with
  $\beta+2>\frac{n}{m_{1}}(\frac{1}{p_{0}}-\frac12)$ and
  $\psi:\mathbb{R}\to \mathbb{C}$.
  Then for $p\in[p_{0},p_{0}']$ we have the estimate
  \begin{equation}\label{eq:estphiL2}
    \|\psi(L)f\|_{L^{p}}\les
    \|f\|_{L^{p}}.
  \end{equation}
  The norm of $\psi(L):L^{p}\to L^{p}$ can be estimated by
  \begin{equation}\label{eq:estconst}
    C(1+(M_{1}-M_{0})
    ^{\beta+2+\f{n-\kappa}{m_2}(\f{1}{p_{0}}-\f{1}{2})})
    \bbar (\lambda+M_{1})^{\beta}\psi(\lambda)\bbar_{N}
  \end{equation}
  where $C$ depends on $c_{1}$ in the doubling property
  \eqref{doublingpro}, on
  $\sup_{Q\in \mathcal{Q}_{0}}\mu(Q)$ and on
  the constants in Assumption \textbf{(L)},
  but is independent of $M_{0},M_{1}$,$\psi$.
\end{lem}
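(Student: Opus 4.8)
The plan is to reduce the $L^{p}$ bound for $\psi(L)$ to the $X^{p_{0},2}$ bound already obtained in Lemma~\ref{lem:phiL}, using the smoothing provided by the resolvent powers of Proposition~\ref{prop-boundednessofRgamma}. First I would fix $\gamma=\frac{n}{m_{1}}(\frac{1}{p_{0}}-\frac12)+\epsilon$ for a suitable small $\epsilon>0$ chosen so that $\beta+2-\gamma>0$; the hypothesis $\beta+2>\frac{n}{m_{1}}(\frac{1}{p_{0}}-\frac12)$ is exactly what makes this possible. Then I would factor
\begin{equation*}
  \psi(L)=(M_{1}+L)^{-\gamma}\cdot(M_{1}+L)^{\gamma-2-\beta}\cdot\big[(M_{1}+L)^{2}\widetilde\psi(L)\big],
  \qquad \widetilde\psi(\lambda):=(\lambda+M_{1})^{\beta}\psi(\lambda),
\end{equation*}
wait—more precisely one writes $\psi(L)=(M_{1}+L)^{-\gamma}\,G(L)$ where $G(\lambda)=(\lambda+M_{1})^{\gamma}\psi(\lambda)=(\lambda+M_{1})^{\gamma-2-\beta}\cdot(\lambda+M_{1})^{2}\widetilde\psi(\lambda)$, so that $G(L)=(M_{1}+L)^{\gamma-2-\beta}(M_{1}+L)^{2}\widetilde\psi(L)$ with the exponent $\gamma-2-\beta<0$, hence $(M_{1}+L)^{\gamma-2-\beta}$ is a bounded operator on $X^{p_{0},2}$ (indeed on every space, being a bounded function of $L$, and in particular it preserves $X^{p_{0},2}$ since it is a uniform limit of polynomials in the bounded operator $R$; alternatively it is itself a negative power of the resolvent with the trivial bound from spectral calculus plus the commutator control of $R$ through Lemma~\ref{lem:psiR}).

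The second step is to chain the mapping properties: by Lemma~\ref{lem:phiL} (applied to $\widetilde\psi$, which requires $N=\lfloor n/p_{0}\rfloor+1$) the operator $(M_{1}+L)^{2}\widetilde\psi(L)$ maps $X^{p_{0},2}\to X^{p_{0},2}$ with norm $\les c(n)C\bbar\widetilde\psi\bbar_{N}$; composing with the bounded factor $(M_{1}+L)^{\gamma-2-\beta}$ keeps us in $X^{p_{0},2}\to X^{p_{0},2}$; and finally Proposition~\ref{prop-boundednessofRgamma} gives $(M_{1}+L)^{-\gamma}:L^{p_{0}}\to X^{p_{0,2}}$ with the stated norm. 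Using the trivial embedding $X^{p_{0},2}\hookrightarrow X^{p_{0},p_{0}}=L^{p_{0}}$ (since $p_{0}\le 2$), we conclude $\|\psi(L)f\|_{L^{p_{0}}}\les\|f\|_{L^{p_{0}}}$. For the endpoint $p=p_{0}'$ I would run the dual argument: $\psi$ is real-valued or, more generally, $\psi(L)^{*}=\overline{\psi}(L)$ and $L$ itself satisfies the dual version of Assumption~\textbf{(L)} (the second summand in \eqref{eq1-assumption-0} is precisely the dual smoothing), so the same estimate holds with $p_{0}$ replaced by $p_{0}'$. Interpolating between $p_{0}$ and $p_{0}'$ gives \eqref{eq:estphiL2} for all $p\in[p_{0},p_{0}']$.

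The third step is bookkeeping of the constant to arrive at \eqref{eq:estconst}. Tracking the factor from Proposition~\ref{prop-boundednessofRgamma}, namely $\epsilon^{-1}+(M_{1}-M_{0})^{\gamma+\frac{n-\kappa}{m_{2}}(\frac1{p_{0}}-\frac12)}$, and noting $\gamma=\beta+2-(\beta+2-\gamma)$ with the choice of $\epsilon$ fixed by $\beta$, one absorbs $\epsilon^{-1}$ and the polynomial-in-$(M_{1}-M_{0})$ growth into the displayed exponent $\beta+2+\frac{n-\kappa}{m_{2}}(\frac1{p_{0}}-\frac12)$; the factor $\bbar(\lambda+M_{1})^{\beta}\psi(\lambda)\bbar_{N}=\bbar\widetilde\psi\bbar_{N}$ comes directly from Lemma~\ref{lem:phiL}, and the $1+(\cdots)$ form accounts for the case $M_{1}-M_{0}\le 1$. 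I expect the main obstacle to be the careful verification that the intermediate operator $(M_{1}+L)^{\gamma-2-\beta}$, a negative (possibly non-integer) power, genuinely acts boundedly on $X^{p_{0},2}$ with a constant controlled only by the quantities allowed in \eqref{eq:estconst}: this is where one must invoke Lemma~\ref{lem:psiR} with $\psi(\xi)=\xi^{2+\beta-\gamma}$ (a smooth, decaying function on the relevant interval since $2+\beta-\gamma>0$), check that its Fourier transform has the required weighted $L^{1}$ bound uniformly, and confirm the constant does not secretly depend on $M_{1}$ in a way that breaks the claimed uniformity — the rescaling $\xi\mapsto(M_{1}+\lambda)^{-1}$ must be handled so that only the combination $M_{1}-M_{0}$ appears.
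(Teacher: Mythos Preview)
Your overall strategy matches the paper's exactly: apply Lemma~\ref{lem:phiL} to $(\lambda+M_{1})^{\beta}\psi(\lambda)$, use Proposition~\ref{prop-boundednessofRgamma} to pass from $L^{p_{0}}$ to $X^{p_{0},2}$, embed $X^{p_{0},2}\hookrightarrow L^{p_{0}}$, then dualize and interpolate. The one unnecessary detour is your splitting of the resolvent power into $(M_{1}+L)^{-\gamma}$ and $(M_{1}+L)^{\gamma-2-\beta}$: the paper simply takes $\gamma=\beta+2$ in Proposition~\ref{prop-boundednessofRgamma}, which is admissible precisely because of the hypothesis $\beta+2>\tfrac{n}{m_{1}}(\tfrac{1}{p_{0}}-\tfrac12)$, and writes $\psi(L)=\widetilde{\psi}(L)\,R^{\beta+2}$ with $\widetilde{\psi}(\lambda)=(\lambda+M_{1})^{\beta+2}\psi(\lambda)$. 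This makes the ``main obstacle'' you flag---the $X^{p_{0},2}$ boundedness of the intermediate non-integer negative power---disappear entirely, and the constant in \eqref{eq:estconst} falls out directly from Proposition~\ref{prop-boundednessofRgamma} with $\gamma=\beta+2$.
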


\begin{proof}%[of ...]
  We apply the previous Lemma to the function 
  $\widetilde{\psi}(\lambda)=(\lambda+M_{1})^{\beta+2}\psi(\lambda)$:
  \begin{equation*}
  \begin{split}
    \|\widetilde{\psi}(L)f\|_{X^{p_{0},2}} 
    &=
    \|(L+M_{1})^{2}\psi(L)(L+M_{1})^{\beta}f\|_{X^{p_{0},2}} 
    \\
    &\le
    c(n)C\bbar(\lambda+M_{1})^{\beta} \psi\bbar_{N}
    \|f\|_{X^{p_{0},2}}.
    \end{split}
\end{equation*}
  Since $\psi(L)=\widetilde{\psi}(L)R^{\beta+2}$, we can write,
  using Proposition \ref{prop-boundednessofRgamma},
  \begin{equation*}
    \|\psi(L)f\|_{X^{p_{0},2}}\le
    \|\widetilde{\psi}(L)\|_{X^{p_{0},2}\to X^{p_{0},2}}
    \|R^{\beta+2}f\|_{X^{p_{0},2}}\lesssim
    \|\widetilde{\psi}(L)\|_{X^{p_{0},2}\to X^{p_{0},2}}
    \|f\|_{L^{p_{0}}}
  \end{equation*}
  where the implicit constant has the form
  \begin{equation*}
    C(1+(M_{1}-M_{0})
    ^{\beta+2+\f{n-\kappa}{m_2}(\f{1}{p_{0}}-\f{1}{2})})
  \end{equation*}
  with $C$ depending on $c_{1}$ in the doubling property
  \eqref{doublingpro} and on
  the constants in Assumption \textbf{(L)},
  but independent of $M_{0},M_{1}$.
  Since  $X^{p_{0},2}$ is continuously embedded in
  $L^{p_{0}}$ with embedding norm 
  $\le\sup_{Q\in \mathcal{Q}_{0}}\mu(Q)^{\f{1}{p_{0}}-\f12}$,
  we have proved that $\psi(L)$
  is bounded on $L^{p_{0}}$ with the same norm.
  By duality and interpolation we condlude the proof.
\end{proof}

\begin{rem}[]\label{rem:constant}
  The dependence on $\psi$ of the norm of $\psi(L)$
  is particularly interesting. The quantity
  $\bbar (\lambda+M_{1})^{\beta}\psi(\lambda)\bbar_{N}$
  is uniformly bounded if $\psi$ varies in a bounded subset
  of $C^{\infty}_{c}(\mathbb{R})$ or of $\mathscr{S}(\mathbb{R})$,
  and $M_{1}$ is bounded.
  More generally, we can write
  \begin{equation*}
    \textstyle
    \bbar (\lambda+M_{1})^{\beta}\psi(\lambda)\bbar_{N}
    \lesssim
    \|\lambda^{\beta}\psi(\lambda-M_{1})\| _{L^{2}(\mathbb{R}^{+})}
    +
    \sum_{j=0}^{N}
    \|\lambda^{j+N+\beta}\partial^{j}\psi(\lambda-M_{1})(\lambda)\|
    _{L^{2}(\mathbb{R}^{+})}
  \end{equation*}
  and we see that the quantity is uniform for $\psi$ varying in
  any bounded subset of a suitable weighted Sobolev space,
  provided $M_{1}$ is bounded (which is always the case in our
  applications). For instance, we can take the weighted
  Sobolev space with norm
  \begin{equation}\label{eq:weightedsob}
    \sum_{j\le n+1}
    \|\bra{\lambda}^{2+n+j+n/m_{1}}\psi^{(j)}(\lambda)\|_{L^{2}}.
  \end{equation}
\end{rem}

\begin{thm}[]\label{the:phiLscal}
  Under Assumption \textbf{(L)} the following estimate holds:
  for all $p\in[p_{0},p_{0}']$,
  \begin{equation*}
    \|\varphi(L)f\|_{L^{p}}\le C\|f\|_{L^{p}}
  \end{equation*}
  and the estimate is uniform for $\varphi$ in bounded subsets
  of $\mathscr{S}(\mathbb{R})$
  (or, more generally, in bounded subsets for the norm
  \eqref{eq:weightedsob}).

  If the stronger Assumption \textbf{(L$_{1}$)} holds,
  then for all $\theta>0$ we have
  \begin{equation*}
    \|\varphi(\theta L)f\|_{L^{p}}\le C\|f\|_{L^{p}}
  \end{equation*}
  and the estimate is uniform for 
  $\varphi$ in bounded subsets of $\mathscr{S}(\mathbb{R})$
  (or, more generally, in bounded subsets for the norm
  \eqref{eq:weightedsob})
  and $\theta$ in bounded subsets of $(0,+\infty)$.
  If in addition we assume $\kappa=n$
  and $M_{0}=0$, then
  the estimate is uniform for all $\theta>0$.
\end{thm}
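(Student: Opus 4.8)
The plan is to prove the three statements of Theorem \ref{the:phiLscal} in sequence, treating the first (Assumption \textbf{(L)}) as the essential content already supplied by Lemma \ref{lem:phiL2}, and then upgrading it to cover rescalings $\varphi(\theta L)$ under the stronger hypotheses. For the first statement, I would simply invoke Lemma \ref{lem:phiL2} with any admissible choice of parameters, say $\beta=0$ and $N=\lfloor n/p_0\rfloor+1$ (noting $\beta+2=2>\frac{n}{m_1}(\frac1{p_0}-\frac12)$ holds since $\frac1{p_0}-\frac12<\frac12$ and we may as well assume $m_1\ge1$, or otherwise increase $\beta$): this gives $\|\varphi(L)f\|_{L^p}\lesssim \bbar(\lambda+M_1)^0\varphi(\lambda)\bbar_N\|f\|_{L^p}$ for $p\in[p_0,p_0']$, with the implicit constant controlled by the doubling constant, $\sup_{Q\in\mathcal D_0}\mu(Q)$, and the constants in \textbf{(L)}, all of which are fixed. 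By Remark \ref{rem:constant}, the seminorm $\bbar(\lambda+M_1)^\beta\varphi(\lambda)\bbar_N$ is bounded uniformly for $\varphi$ ranging in a bounded subset of $\mathscr S(\mathbb R)$, or more generally of the weighted Sobolev space with norm \eqref{eq:weightedsob}; this yields the uniformity claimed.

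For the rescaled statement under Assumption \textbf{(L$_1$)}, the key observation is that \textbf{(L$_1$)} is exactly \textbf{(L)} with $m_1=m_2=m$ plus the scale-aware resolvent bound \eqref{eq2-assumption-1}, which provides control of the commutators of $R(M)=(L+M)^{-1}$ with the quantitatively sharp power $(M-M_0)^{-1-j/m}$ for \emph{every} $M>M_0$, not just for one fixed $M_1$. I would apply the first part of the theorem to the operator $\theta L$, which satisfies \textbf{(L)} with the resolvent $(\theta L+z)^{-1}=\theta^{-1}((L+z/\theta)^{-1})$; choosing $M_1=M_1(\theta)$ of order $\max(M_0,\theta^{-1})+1$ (or similar) so that $\theta M_1>M_0$, estimate \eqref{eq2-assumption-1} gives $\|{\rm Ad}^j_x((\theta L+\theta M_1)^{-1})\|_{2\to2}=\theta^{-1}\|{\rm Ad}^j_x((L+M_1)^{-1})\|_{2\to2}\lesssim \theta^{-1}(\theta M_1-M_0)^{-1-j/m}$, which after the rescaling is comparable to a $\theta$-independent constant when $\theta$ stays in a bounded set (or, as we will see, for all $\theta$ when $M_0=0$). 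Feeding this into Lemmas \ref{lem:eixiR}--\ref{lem:phiL2} applied with $L$ replaced by $\theta L$ produces $\|\varphi(\theta L)f\|_{L^p}\lesssim C(\theta)\bbar(\lambda+M_1(\theta))^\beta\varphi(\lambda)\bbar_N\|f\|_{L^p}$; one then checks that the prefactor $C(\theta)$, which involves $(M_1(\theta)-M_0)^{\beta+2+\frac{n-\kappa}{m}(\frac1{p_0}-\frac12)}$ from \eqref{eq:estconst} together with the rescaled constants in \eqref{eq1-assumption-0}, remains bounded as $\theta$ ranges over a bounded subset of $(0,+\infty)$.

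For the final assertion — uniformity for \emph{all} $\theta>0$ when $\kappa=n$ and $M_0=0$ — the point is that when $\kappa=n$ the exponent $\frac{n-\kappa}{m}(\frac1{p_0}-\frac12)$ in \eqref{eq:estconst} vanishes, and when $M_0=0$ the operator $L$ is nonnegative so one may take $M_1$ proportional to $\theta^{-1}$ with no lower obstruction; then every $\theta$-dependent constant that appears is homogeneous of degree zero under the rescaling $L\mapsto\theta L$, $t\mapsto t$. Concretely, with $M_0=0$, Proposition \ref{prop-boundednessofRgamma} for $\theta L$ and the condition \eqref{eq1-assumption-0} for $\theta L$ (which after rescaling $t\mapsto\theta^{-1}t$ just reproduces the same power-of-$t$ structure with $\kappa=n$ killing the $t_{\le1}$ term's effect on the embedding constant) both become scale-invariant, so $C(\theta)$ collapses to an absolute constant. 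The main obstacle I anticipate is the bookkeeping in the rescaling argument: one must carefully track how each constant in Lemmas \ref{lem:eixiR} through \ref{lem:phiL2} and in Propositions \ref{prop-boundednessOfsemigroups}--\ref{prop-boundednessofRgamma} transforms under $L\rightsquigarrow\theta L$, verify that the interpolation/duality steps are unaffected, and confirm that the choice of $M_1=M_1(\theta)$ is simultaneously compatible with $\theta M_1>M_0$ (needed for \textbf{(L$_1$)}) and with keeping the final prefactor bounded; the $\kappa=n$, $M_0=0$ case is where this is cleanest, and the bounded-$\theta$ case is where one simply absorbs the mild polynomial growth in $M_1(\theta)$ into the implicit constant.
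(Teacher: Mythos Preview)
Your treatment of the first statement is correct and coincides with the paper: it is indeed a direct application of Lemma \ref{lem:phiL2} together with Remark \ref{rem:constant}.

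For the second and third statements, however, there is a genuine gap. You propose to verify Assumption \textbf{(L)} for the operator $\theta L$ on the \emph{same} metric measure space $(\mathcal{X},d,\mu)$, with the same weight $w$ and the same dyadic structure $\{\mathcal{D}_\nu\}$, and then track how the constants transform. This does not work: the smoothing condition \eqref{eq1-assumption-0} is tied to the amalgam spaces $X^{1,p_0}_\nu$, $X^{1,2}_\nu$ at the scale $\nu$ determined by $2^{-\nu}\simeq t^{1/m}$, but $e^{-t(\theta L)}=e^{-(\theta t)L}$ satisfies the hypothesis only at the shifted scale $\nu'$ with $2^{-\nu'}\simeq(\theta t)^{1/m}$. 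These are different spaces, and the known bound at scale $\nu'$ does not transfer to scale $\nu$. Concretely, if you push the constants through Proposition \ref{prop-boundednessOfsemigroups} and Proposition \ref{prop-boundednessofRgamma} with $L$ replaced by $\theta L$ (still using the fixed amalgam space $X^{p_0,2}=X^{p_0,2}_0$), you pick up a factor $\theta^{-\frac{n}{m}(\frac{1}{p_0}-\frac12)}$ in $\|R'^{\beta+2}\|_{L^{p_0}\to X^{p_0,2}}$, which blows up as $\theta\to0$ and is \emph{not} compensated by the commutator bounds (those only get smaller). Your assertion that ``every $\theta$-dependent constant that appears is homogeneous of degree zero under the rescaling $L\mapsto\theta L$'' is therefore false without a matching rescaling of the ambient structure.

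The paper supplies exactly this missing ingredient: for $\theta=2^{-m\gamma}$ it introduces the rescaled space $\overline{\mathcal{X}}=\mathcal{X}$ with $\overline{d}=2^{\gamma}d$, $\overline{\mu}=2^{n\gamma}\mu$, rescaled dyadic cubes $\overline{\mathcal{D}}_\nu=\mathcal{D}_{\nu+\gamma}$, and rescaled weight $\overline{w}=2^{\gamma}w$. On this rescaled space the operator $\overline{L}=\theta L$ satisfies Assumption \textbf{(L)} with constants \emph{independent of $\gamma$} (uniformly for $\gamma\ge\gamma_0$, and for all $\gamma$ when $\kappa=n$, $M_0=0$): the shift in the dyadic scale exactly absorbs the mismatch in \eqref{eq1-assumption-0}, and the simultaneous rescaling of the weight turns \eqref{eq2-assumption-1} into $\|\overline{\rm Ad}^{j}_x((\overline{L}+M)^{-1})\|_{2\to2}\le C M^{-1-j/m}$ when $M_0=0$, which is the required \eqref{eq2-assumption-0} with a $\theta$-independent constant. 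One then applies the first part of the theorem to $\overline{L}$ on $\overline{\mathcal{X}}$, and since $L^p(\overline{\mathcal{X}})$ and $L^p(\mathcal{X})$ differ only by a power of $2^{\gamma}$ which cancels on both sides, the conclusion follows. The rescaling of the space and weight, not merely of the operator, is the essential step you are missing.
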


\begin{proof}%[of ...]
  The first claim is just a special case of the previous Lemma.
  Thus we assume that \textbf{(L$_{1}$)} holds and we focus on the
  second claim. 
  % Let $\theta>0$ and
  % $\gamma\in \mathbb{R}$ with $\theta=2^{\gamma}$.
  Clearly it is sufficient to prove the result
  for all $\theta>0$ of the form
  \begin{equation*}
    \theta=2^{-m\gamma }
    \quad\text{for some}\quad 
    \gamma\in \mathbb{Z}.
  \end{equation*}
  Thus we fix a $\theta=2^{-m\gamma }>0$ and
  define a new metric measure space 
  $(\overline{\mathcal{X}},\overline{d},\overline{\mu})$ 
  by multiplying $d$ and $\mu$ by fixed constants, as follows:
  \begin{equation*}
    \overline{\mathcal{X}}=\mathcal{X},
    \qquad
    \overline{d}=2^{\gamma}d,
    \qquad
    \overline{\mu}=2^{n \gamma}\mu.
  \end{equation*}
  Note the relation
  \begin{equation*}
    \|u\|_{L^{p}(\overline{\mathcal{X}},d \overline{\mu})}=
    2^{\f{n \gamma}{p}}\|u\|_{L^{p}(\mathcal{X},d\mu)}.
  \end{equation*}
  Writing
  \begin{equation*}
    \overline{\mathcal{D}}_{\nu}=
    \mathcal{D}_{\nu+\gamma}
  \end{equation*}
  we see that the $\overline{\mathcal{D}}_{\nu}$ form a
  collection of dyadic cubes for the space 
  $\overline{\mathcal{X}}$, and with respect to the new
  distance $\overline{d}$ we have
  ${\rm diam}\; Q \sim 2^{-\nu}$ for all 
  $Q\in \overline{\mathcal{D}}_{\nu}$.
  Then if we define the amalgam spaces
  $\overline{X}^{p,q}_{\nu}$ as in \eqref{eq:defamalgam}
  but with $\overline{\mathcal{D}}_{\nu}$ instead
  of $\mathcal{D}_{\nu}$ and with the $L^{q}(Q)$
  norms computed in the measure $\overline{\mu}$, we get
  \begin{equation*}
    \|f\|_{\overline{X}^{p,q}_{\nu}}=
    2^{\f{n \gamma}{q}}\|f\|_{X^{p,q}_{\nu+\gamma}}.
  \end{equation*}

  Next, we denote
  by $\overline{L}$ the operator $\theta L$, which
  is self--adjoint on $L^{2}(\overline{\mathcal{X}})$ and satisfies
  $\overline{L}+\overline{M_{0}}\ge0$ with 
  $\overline{M_{0}}=\theta M_{0}$.
  To prove the claim, it will be sufficient to prove that
  the operator $\overline{L}$ satisfies the conditions of
  Assumption \textbf{(L)},
  with constants independent of $\theta$ in the prescribed range.
  By the first part of the Theorem, the claim will follow.

  Fix a $t>0$ and $\nu\in \mathbb{Z}$ as in condition
  \eqref{eq1-assumption-0} with $m_{1}=m_{2}=m$, i.e.,
  \begin{equation*}
    2^{-\nu}\leq t^{1/m}<2^{-\nu+1}.
  \end{equation*}
  Consider the first term in \eqref{eq1-assumption-0}
  (the second one is handled in a similar way):
  \begin{equation*}
    \|e^{-t \overline{L}}\|
    _{\overline{X}_{\nu}^{1,p_{0}}\to \overline{X}_{\nu}^{1,2}}
    =
    \|e^{-(\theta t) L}\|
    _{X_{\nu+\gamma}^{1,p_{0}}\to X_{\nu+\gamma}^{1,2}}
    \cdot
    2^{n \gamma(\f{1}{2}-\f{1}{p_{0}})};
  \end{equation*}
  using assumption \eqref{eq1-assumption-0} 
  (with $m_{1}=m_{2}=m$),
  since $2^{-(\nu+\gamma)}\le (\theta t)^{1/m}
    =2^{-\gamma}t <2^{-(\nu+\gamma)+1}$
  we get
  \begin{equation*}
    \le
    Ce^{M_{0}(\theta t)}
    ((\theta t)^{\f{n}{m}}\wedge 
      (\theta t)^{\f{\kappa}{m}})^{\f{1}{2}-\f{1}{p_{0}}}
    \cdot
    2^{n \gamma(\f{1}{2}-\f{1}{p_{0}})}
  \end{equation*}
  \begin{equation*}
    =C
    e^{\overline{M}_{0} t}
    (t^{\f{n}{m}}\wedge 
      t^{\f{\kappa}{m}}
      \cdot 2^{(n-\kappa)\gamma} )^{\f{1}{2}-\f{1}{p_{0}}}.
  \end{equation*}
  Thus we see that the operator $\overline{L}$
  also satisfies condition \eqref{eq1-assumption-0}
  with $m_{1}=m_{2}=m$. Note that the estimate
  is uniform in $\gamma$ provided $\gamma\ge \gamma_{0}$
  for some fixed $\gamma_{0}$, or equivalently,
  provided $\theta$ is bounded from above;
  moreover, $\overline{M}_{0}$ is also uniformly bounded from 
  above. It is also clear that if $\kappa=n$ and $M_{0}=0$
  the condition is uniform for all $\gamma\in \mathbb{Z}$,
  i.e., for all $\theta>0$.

  It remains to check condition \eqref{eq2-assumption-0};
  we choose as weight function and the space of ``'test functions'
  \begin{equation*}
    \overline{w}(x,y)=2^{\gamma}w(x,y), \ \ \text{and} \quad \mathscr D(\overline{w})=\mathscr D(w).
  \end{equation*}
  Writing $\overline{\rm{Ad}}^{j}_{x}$ for the commutators with
  the new weight function $\overline{w}$, we have
  \begin{equation*}
    \overline{\rm{Ad}}^{j}_{x}((\overline{L}+M)^{-1})
    =
    2^{m \gamma}2^{j \gamma}
    {\rm Ad}^{j}_{x}((L+2^{m\gamma}M)^{-1})).
  \end{equation*}
  By \eqref{eq2-assumption-1} we have then
  \begin{equation*}
    \|\overline{\rm{Ad}}^{j}_{x}((\overline{L}+M)^{-1})\|_{2\to2}
    \le
    C
    2^{m \gamma}2^{j \gamma}
    (2^{m \gamma}M-M_{0})^{-1-\f{j}{m}}
  \end{equation*}
  provided $2^{m \gamma}M> M_{0}$. Now, if $\gamma\ge \gamma_{0}$
  is bounded from below, we can choose
  $M=M_{1}=2^{-m \gamma_{0}}(M_{0}+1)$ and we get
  \begin{equation*}
    \le
    C
    2^{m \gamma}2^{j \gamma}
    (2^{m(\gamma-\gamma_{0})})^{-1-\f{j}{m}}\le C'
  \end{equation*}
  for some constant independent of $\gamma$.
  Note that if $M_{0}=0$ we have
  \begin{equation*}
    \|\overline{\rm{Ad}}^{j}_{x}((\overline{L}+M)^{-1})\|_{2\to2}
    \le
    C
    2^{m \gamma}2^{j \gamma}
    (2^{m \gamma}M)^{-1-\f{j}{m}}
    =
    CM^{-1-\f{j}{m}}
  \end{equation*}
  for all $M>0$, thus we can pick simply $M_{1}=1$
  without restrictions on $\gamma\in \mathbb{Z}$.
  The proof is concluded.
\end{proof}

\subsection{Proof of Theorems \ref{the:second}, \ref{the:third} and \ref{the:fourth}}
\label{sub:pro_of_the}

We keep using the notation
\begin{equation*}
  {\rm Ad}_{x}(A)=[w_{x},A],
  \qquad
  {\rm Ad}^{k}_{x}(A)=[w_{x},{\rm Ad}^{k-1}_{x}(A)]
\end{equation*}
for a generic operator $A$ and a $\mathbb{R}^{\ell}$ valued
weight function $w_{x}(\cdot)=w(x,\cdot)$.

\begin{lem}[]\label{lem:basiccom}
  For any $k\ge1$ and $z\in \mathcal{X}$ the following identities hold:
  % for suitable constants $c_{\alpha},c'_{\alpha}$ depending on $\alpha$
  % (and on $k$):
  \begin{equation*}
  \begin{split}
    {\rm Ad}_{z}(R^{2k}e^{-itL})=&
    \\
    =\sum_{\alpha=0}^{k}
    % c_{\alpha}
    R^{\alpha}&{\rm Ad}_{z}(R)e^{-itL}R^{2k-\alpha-1}+
    \sum_{\alpha=0}^{k}
    % c'_{\alpha}
    R^{2k-\alpha-1}e^{-itL}{\rm Ad}_{z}(R)R^{\alpha}+
    \\
    &+i\int_{0}^{t}
    e^{-isL}R^{k-1}{\rm Ad}_{z}(R)R^{k-1}e^{i(s-t)L}ds,
  \end{split}
  \end{equation*}
  \begin{equation*}
  \begin{split}
    {\rm Ad}_{z}(R^{2k+1}e^{-itL})=&
    \\
    =\sum_{\alpha=0}^{k}
    % c_{\alpha}
    R^{\alpha}&{\rm Ad}_{z}(R)e^{-itL}R^{2k-\alpha}+
    \sum_{\alpha=0}^{k+1}
    % c'_{\alpha-1}
    R^{2k-\alpha}e^{-itL}{\rm Ad}_{z}(R)R^{\alpha}+
    \\
    &+i\int_{0}^{t}
    e^{-isL}R^{k-1}{\rm Ad}_{z}(R)R^{k}e^{i(s-t)L}ds.
  \end{split}
  \end{equation*}
\end{lem}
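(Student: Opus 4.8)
The plan is to derive both identities by pure operator calculus; since the vector--valued case is handled componentwise, I may assume $w_z(\cdot)=w(z,\cdot)$ is a single scalar multiplication. I would rely on three ingredients: (a) ${\rm Ad}_z(\cdot)=[w_z,\cdot]$ is a derivation, so ${\rm Ad}_z(AB)={\rm Ad}_z(A)B+A\,{\rm Ad}_z(B)$ and hence ${\rm Ad}_z(R^m)=\sum_{j=0}^{m-1}R^{j}\,{\rm Ad}_z(R)\,R^{m-1-j}$; (b) $R=(M_1+L)^{-1}$ commutes with $e^{\pm isL}$ (both are Borel functions of the self--adjoint $L$), so powers of $R$ slide freely across the semigroup factors; (c) a Duhamel formula for ${\rm Ad}_z(e^{-itL})$ in which the unbounded commutator ${\rm Ad}_z(L)$ is replaced by the bounded operator ${\rm Ad}_z(R)$ flanked by two copies of $R^{-1}$.

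For (c) I would first note that differentiating $RR^{-1}=I$ gives ${\rm Ad}_z(R^{-1})=-R^{-1}\,{\rm Ad}_z(R)\,R^{-1}$, whence, since $L=R^{-1}-M_1$, ${\rm Ad}_z(L)=-R^{-1}\,{\rm Ad}_z(R)\,R^{-1}$. Then solving the operator ODE $\tfrac{d}{dt}{\rm Ad}_z(e^{-itL})=-i\,{\rm Ad}_z(L)\,e^{-itL}-iL\,{\rm Ad}_z(e^{-itL})$ with zero initial datum --- equivalently, integrating the $s$--derivative of $e^{-isL}w_z e^{-i(t-s)L}$ from $0$ to $t$ --- yields ${\rm Ad}_z(e^{-itL})=-i\int_0^t e^{-i(t-s)L}\,{\rm Ad}_z(L)\,e^{-isL}\,ds=i\int_0^t e^{-i(t-s)L}R^{-1}\,{\rm Ad}_z(R)\,R^{-1}e^{-isL}\,ds$. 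This integrand is unbounded on its own, but becomes a strongly continuous bounded--operator--valued integrand once it is sandwiched between sufficiently high powers of $R$, which is exactly what happens below.

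Next I would write $R^{2k}e^{-itL}=R^{k}e^{-itL}R^{k}$ and $R^{2k+1}e^{-itL}=R^{k}e^{-itL}R^{k+1}$ (legitimate by (b)), apply the three--factor Leibniz rule, and treat the three resulting terms in turn. For the middle term $R^{k}\,{\rm Ad}_z(e^{-itL})\,R^{k}$ (resp. $R^{k}\,{\rm Ad}_z(e^{-itL})\,R^{k+1}$): insert the Duhamel formula and slide the outer powers past the semigroups so that both $R^{-1}$'s cancel, producing --- after the change of variables $s\mapsto t-s$ --- precisely $i\int_0^t e^{-isL}R^{k-1}\,{\rm Ad}_z(R)\,R^{k-1}e^{i(s-t)L}\,ds$ (resp. with $R^{k}$ replacing the rightmost $R^{k-1}$). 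The hypothesis $k\ge1$ enters exactly here, making $R^{k-1}$ a genuine bounded operator. For the two outer terms, ${\rm Ad}_z(R^{k})\,e^{-itL}R^{k}$ and $R^{k}e^{-itL}\,{\rm Ad}_z(R^{k})$ (with $R^{k+1}$ in the odd case), expand the power--commutators via (a), move $e^{-itL}$ through the powers of $R$ with which it commutes, and reindex; collecting everything yields the stated identities. (Alternatively, avoiding the symmetric split, one expands ${\rm Ad}_z(R^{2k})\,e^{-itL}+R^{2k}\,{\rm Ad}_z(e^{-itL})$ directly and integrates by parts once in $s$ using $R^{-1}e^{-i(t-s)L}=-i\partial_s e^{-i(t-s)L}+M_1 e^{-i(t-s)L}$; the boundary terms feed the two sums, the surviving integral loses one power of $R$ on each side, and the $M_1$--contributions cancel in pairs.)

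The algebra is routine; the genuine difficulty is functional--analytic, since $w_z$, $L$ and ${\rm Ad}_z(L)$ are all unbounded. I would carry out all of the above --- the Leibniz identities for $[w_z,\cdot]$, differentiation under the integral sign, the Duhamel representation, and the cancellation of the $R^{-1}$'s --- first on a convenient dense subspace adapted to the problem, e.g. $\mathscr D(w)$ (or vectors $R^{N}f$ with $f\in\mathscr D(w)$ and $N$ large), where the regularity needed to apply $w_z$, $L$ and $R^{-1}$ is available and $s\mapsto e^{-isL}$ is strongly $C^1$. This is harmless: both sides of the asserted identities are built solely from bounded operators --- finite products of $R$, $e^{\pm isL}$ and ${\rm Ad}_z(R)$ (bounded by Assumption \textbf{(L)}/\textbf{(L$_{1}$)}), plus an integral over $[0,t]$ of a strongly continuous bounded--operator--valued function --- so once the identities are verified on the dense subspace they extend by continuity to all of $L^2(\mathcal X)$.
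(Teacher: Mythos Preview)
Your proposal is correct and rests on the same ingredients as the paper's proof: the symmetric factorization $R^{k}e^{-itL}R^{k}$ (respectively $R^{k}e^{-itL}R^{k+1}$), the derivation property of ${\rm Ad}_z$, and a Duhamel identity for the middle term ${\rm Ad}_z(e^{-itL})$. The only difference is organizational: the paper argues by induction on $k$, verifying the base case $k=1$ (which it simply states ``can be verified'') and then passing from $k$ to $k+1$ by peeling off one $R$ on each side, whereas you compute directly for general $k$ via the explicit expansion ${\rm Ad}_z(R^{m})=\sum_{j=0}^{m-1}R^{j}{\rm Ad}_z(R)R^{m-1-j}$. Your route is slightly more direct and also spells out the Duhamel step and the functional--analytic justification, which the paper leaves implicit; conversely, the inductive approach avoids ever writing down the unbounded object ${\rm Ad}_z(e^{-itL})$ for general $k$, since at each step only one power of $R$ has to be absorbed. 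Either way the content is the same.
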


\begin{proof}%[of ...]
  The first identity is proved by induction on $k$. 
  
  It can be verified that
  \begin{equation*}
    {\rm Ad}_{z}(Re^{-i t L}R)=
    {\rm Ad}_{z}(R)e^{-i t L}R+
    e^{-i t L}R{\rm Ad}_{z}(R)+
    i\int_{0}^{t}
    e^{-is L}{\rm Ad}_{z}(R)e^{i(s-t)L}ds
  \end{equation*}
  which is the first formula for $k=1$. 
  To prove the step $k\to k+1$ we write
  \begin{equation*}
    {\rm Ad}_{z}(R \cdot (R^{2k}e^{-itL})\cdot R)=
    {\rm Ad}_{z}(R)e^{-itL}R^{2k+1}+
    R^{2k+1}e^{-itL}{\rm Ad}_{z}(R)+I
  \end{equation*}
  where
  \begin{equation*}
    I=R \cdot {\rm Ad}_{z}(R^{2k}e^{-itL})\cdot R
  \end{equation*}
  and using the inductive assumption for the case $k$ we easily obtain
  the claim. The second formula is deduced from the first one
  writing
  \begin{equation*}
    {\rm Ad}_{z}((R^{2k}e^{-itL})\cdot R)=
    {\rm Ad}_{z}(R^{2k}e^{-itL}) R+
    R^{2k}e^{-itL}{\rm Ad}_{z}(R).
  \end{equation*}
\end{proof}

\begin{lem}[]\label{lem:ordlcom}
  For $0\le\ell\le k$ and $1\le k\le \lfloor n/2 \rfloor+1$ we have
  \begin{equation*}
    \|{\rm Ad}_{z}^{\ell}(R^{2k}e^{-itL})\|_{2\to2}\le
    C(1+|t|)^{\ell}
  \end{equation*}
  with $C$ independent of $z\in \mathcal{X}$ and $t\in\mathbb{R}$.
\end{lem}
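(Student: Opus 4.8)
The plan is to argue by induction on $\ell$. For $\ell=0$ there is nothing to prove: $e^{-itL}$ is unitary on $L^{2}$ and, since $L+M_{0}\ge0$ forces $M_{1}+L\ge M_{1}-M_{0}>0$, one has $\|R\|_{2\to2}\le(M_{1}-M_{0})^{-1}$, hence $\|R^{2k}e^{-itL}\|_{2\to2}\le C$.

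For the inductive step I would write ${\rm Ad}_{z}^{\ell}(R^{2k}e^{-itL})={\rm Ad}_{z}^{\ell-1}\bigl({\rm Ad}_{z}(R^{2k}e^{-itL})\bigr)$ and insert the first identity of Lemma \ref{lem:basiccom}. This reduces the problem to estimating ${\rm Ad}_{z}^{\ell-1}$ of three kinds of operators: the two boundary sums $\sum_{\alpha}R^{\alpha}{\rm Ad}_{z}(R)e^{-itL}R^{2k-\alpha-1}$ and $\sum_{\alpha}R^{2k-\alpha-1}e^{-itL}{\rm Ad}_{z}(R)R^{\alpha}$, and the integral term $i\int_{0}^{t}e^{-isL}R^{k-1}{\rm Ad}_{z}(R)R^{k-1}e^{i(s-t)L}\,ds$. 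Expanding ${\rm Ad}_{z}^{\ell-1}$ of each by the Leibniz rule for commutators, every commutator landing on a power of $R$ or on the factor ${\rm Ad}_{z}(R)$ just produces some operator ${\rm Ad}_{z}^{a}(R)$, and since the total number of commutators is $\ell\le k\le\lfloor n/2\rfloor+1$ one always stays in the range $a\le\lfloor n/2\rfloor+1$ where $\|{\rm Ad}_{z}^{a}(R)\|_{2\to2}\le C$ by hypothesis \eqref{eq2-assumption-0} and $\|R\|_{2\to2}\le C$. A commutator landing on a propagator $e^{-i\tau L}$ is the delicate case: here ${\rm Ad}_{z}(L)$ is a priori unbounded, but using ${\rm Ad}_{z}(e^{-i\tau L})=-i\int_{0}^{\tau}e^{-i\sigma L}{\rm Ad}_{z}(L)e^{i(\sigma-\tau)L}\,d\sigma$ (the identity already exploited in the proof of Lemma \ref{lem:basiccom}), together with the algebraic relation $R\,{\rm Ad}_{z}(L)\,R=-{\rm Ad}_{z}(R)$ and the fact that $R$ commutes with every function of $L$, such a commutator can be absorbed at the cost of consuming two neighbouring resolvent factors, producing one bounded factor ${\rm Ad}_{z}(R)$, and introducing one extra time integration over an interval of length $\le|t|$.

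Carrying this out, ${\rm Ad}_{z}^{\ell}(R^{2k}e^{-itL})$ becomes a finite linear combination of iterated time integrals of multiplicity $p\le\ell$ over simplices $\{0\le s_{p}\le\dots\le s_{1}\le t\}$ (or their reflection when $t<0$), whose integrands are products of powers of $R$, of operators ${\rm Ad}_{z}^{a}(R)$ with $a\le\lfloor n/2\rfloor+1$, and of unitary propagators; each such integrand has $L^{2}\to L^{2}$ norm $O(1)$ and each simplex has measure $\le|t|^{p}$, so the corresponding term is $O(|t|^{p})$ and the whole sum is $O\bigl((1+|t|)^{\ell}\bigr)$, which is the claim. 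The role of the hypothesis $\ell\le k$ is exactly that at every stage there remain enough resolvent powers flanking the propagators to perform the absorption step; accordingly, I expect the main obstacle to be the bookkeeping, i.e.\ organizing the induction around a structural statement (``${\rm Ad}_{z}^{\ell}(R^{2k}e^{-itL})$ is a combination of terms of the above form, with every propagator flanked by bare resolvents'') that is stable under one more application of ${\rm Ad}_{z}$. As a convenient, though not strictly necessary, simplification one may first peel off a factor $R^{2(k-\ell)}$, note that ${\rm Ad}_{z}^{j}(R^{2(k-\ell)})$ is bounded for $j\le\ell$, and thereby reduce everything to the case $k=\ell$.
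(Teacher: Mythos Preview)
Your proposal is correct and follows essentially the same route as the paper: expand one commutator via Lemma~\ref{lem:basiccom}, distribute the remaining ${\rm Ad}_{z}^{\ell-1}$ by the Leibniz rule, and control every factor either through \eqref{eq2-assumption-0} or through what has already been established for smaller indices. The only organizational difference is that the paper packages the argument as a double induction (outer on $k$, inner on $\ell$), invoking the inductive hypothesis directly on blocks of the form $R^{m}e^{-i\tau L}$, whereas you keep the propagators as separate factors and handle commutators landing on them by the explicit absorption identity $R\,{\rm Ad}_{z}(L)\,R=-{\rm Ad}_{z}(R)$ (which is precisely what underlies Lemma~\ref{lem:basiccom}); both versions leave the same resolvent--counting bookkeeping to the reader, and you are right that formulating a structural invariant (``every propagator flanked by enough bare resolvents'') is the cleanest way to close it.
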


\begin{proof}%[of ...]
  We proceed by induction on $k=1,\dots,\lfloor n/2 \rfloor+1$.
  When $k=1$, recalling the formulas from the previous Lemma
  and assumption \eqref{eq2-assumption-0}, we obtain the claim
  immediately. 
  Assume now the result is true for a certain $k$ and let us
  prove it for $k+1$. If $\ell=1$ the estimate follows again from
  the first identity in the previous Lemma. If the estimate is true
  for some $\ell<k$, we prove it for $\ell+1$ writing
  \begin{equation*}
    {\rm Ad}^{\ell+1}_{z}(R^{2k}e^{-itL})=
    {\rm Ad}^{\ell}_{z}({\rm Ad}_{z}(R^{2k}e^{-itL})),
  \end{equation*}
  expanding the term
  ${\rm Ad}_{z}(R^{2k}e^{-itL})$ via the first identity of the previous
  Lemma, and distributing the adjoint via the formula
  \begin{equation*}
    {\rm Ad}^{\ell}_{z}(A_{1}\dots A_{n})=
    \sum_{j_{1}+ \dots +j_{n}=\ell}
    \frac{\ell!}{j_{1}!\dots j_{n}!}
    {\rm Ad}^{j_{1}}_{z}(A_{1})
    \dots
    {\rm Ad}^{j_{n}}_{z}(A_{n}).
  \end{equation*}
  It is easy to check that all the terms obtained are bounded operators
  on $L^{2}$, either using the inductive assumption or
  \eqref{eq2-assumption-0}. The proof is concluded.
\end{proof}

\begin{lem}[]\label{lem:boundexp}
  Let $k=\lfloor n/2 \rfloor+1$. Then we have the estimates
  \begin{equation*}
    \|R^{2k}e^{-itL}\|_{X^{p_{0},2}\to X^{p_{0},2}}
    \le
    C(1+|t|)^{n(1/p_{0}-1/2)}
  \end{equation*}
  and, for all $p\in[p_{0},p_{0}']$ and 
  $\beta>\frac{n}{m_{1}}(\frac1{p_{0}}-\frac12)$,
  \begin{equation*}
    \|R^{2k+\beta}e^{-itL}\|_{L^{p}\to L^{p}}
    \le
    C(1+|t|)^{n|1/p-1/2|}.
  \end{equation*}
\end{lem}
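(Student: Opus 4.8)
The plan is to obtain the amalgam estimate from Lemma~\ref{lem:ordlcom} through the commutator criterion Theorem~\ref{thm-commutator}, and then to deduce the $L^{p}$ estimate by factoring off a smoothing power of the resolvent and interpolating. Fix $t\in\mathbb R$ and put $T=R^{2k}e^{-itL}$ with $k=\lfloor n/2\rfloor+1$. Since $\|R\|_{2\to2}\le(M_{1}-M_{0})^{-1}$ and $e^{-itL}$ is unitary on $L^{2}$, the operator $T$ is bounded on $L^{2}$ with norm $\le(M_{1}-M_{0})^{-2k}$, independently of $t$. By Lemma~\ref{lem:ordlcom}, $\|{\rm Ad}^{\ell}_{z}(T)\|_{2\to2}\le C(1+|t|)^{\ell}$ for all $0\le\ell\le k$ and all $z\in\mathcal X$, so with $B_{0}:=\max(C,1)(1+|t|)\ge1$ one has $\|{\rm Ad}^{\ell}_{z}(T)\|_{2\to2}\le B_{0}^{\ell}$ throughout the range $0\le\ell\le\lfloor n/2\rfloor+1$ required by Theorem~\ref{thm-commutator}. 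Applying that theorem with $p=p_{0}\le2$ then gives
\begin{equation*}
  \|R^{2k}e^{-itL}\|_{X^{p_{0},2}\to X^{p_{0},2}}\le C'B_{0}^{\,n(1/p_{0}-1/2)}\les(1+|t|)^{n(1/p_{0}-1/2)},
\end{equation*}
with constants depending only on $n$, $K_{0}$, $M_{1}-M_{0}$ and the data of Assumption~\textbf{(L)}; this is the first estimate.

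For the second estimate I first take $p=p_{0}$. All powers of $R$ commute with $e^{-itL}$, so $R^{2k+\beta}e^{-itL}=(R^{2k}e^{-itL})\,R^{\beta}$. Writing $\epsilon:=\beta-\frac{n}{m_{1}}(\frac1{p_{0}}-\frac12)>0$ we have $\beta=\frac{n}{m_{1}}(\frac1{p_{0}}-\frac12)+\epsilon$, so Proposition~\ref{prop-boundednessofRgamma} (with $M=M_{1}$) yields $\|R^{\beta}f\|_{X^{p_{0},2}}\les\|f\|_{L^{p_{0}}}$. Combining this with the amalgam estimate just proved and with the continuous embedding $X^{p_{0},2}\hookrightarrow L^{p_{0}}$ (of norm $\le(\sup_{Q\in\mathcal D_{0}}\mu(Q))^{1/p_{0}-1/2}$, exactly as in the proof of Lemma~\ref{lem:phiL2}), I obtain
\begin{equation*}
  \|R^{2k+\beta}e^{-itL}f\|_{L^{p_{0}}}\les\|R^{2k}e^{-itL}(R^{\beta}f)\|_{X^{p_{0},2}}\les(1+|t|)^{n(1/p_{0}-1/2)}\|f\|_{L^{p_{0}}},
\end{equation*}
which is the claim for $p=p_{0}$, since there $n|1/p_{0}-1/2|=n(1/p_{0}-1/2)$.

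To reach general $p\in[p_{0},p_{0}']$, observe that the $L^{2}$-adjoint of $R^{2k+\beta}e^{-itL}$ is $R^{2k+\beta}e^{itL}$, so duality together with the $L^{p_{0}}$-bound above (applied at time $-t$) gives $\|R^{2k+\beta}e^{-itL}\|_{L^{p_{0}'}\to L^{p_{0}'}}\les(1+|t|)^{n(1/p_{0}-1/2)}$. On the other hand $\|R^{2k+\beta}e^{-itL}\|_{L^{2}\to L^{2}}\le(M_{1}-M_{0})^{-(2k+\beta)}$ uniformly in $t$. Interpolating (Riesz--Thorin) between $L^{p_{0}}$ and $L^{2}$ on $[p_{0},2]$ and between $L^{2}$ and $L^{p_{0}'}$ on $[2,p_{0}']$, at a point $p$ with $\frac1p=(1-\vartheta)\frac1{p_{0}}+\vartheta\frac12$ the exponent of $(1+|t|)$ becomes $(1-\vartheta)\,n(1/p_{0}-1/2)=n|1/p-1/2|$, and symmetrically on $[2,p_{0}']$; this is the asserted bound for every $p\in[p_{0},p_{0}']$.

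The only step that is not routine bookkeeping is the last one: interpolating directly between the endpoints $p_{0}$ and $p_{0}'$ would give the non-sharp exponent $n(1/p_{0}-1/2)$ at every $p$, so one must use the $t$-uniform $L^{2}\to L^{2}$ bound as a middle node and interpolate separately on $[p_{0},2]$ and $[2,p_{0}']$. All constants are controlled by the data of Assumption~\textbf{(L)} — in particular $M_{1}-M_{0}$ and the $\epsilon$ of Proposition~\ref{prop-boundednessofRgamma} are fixed — so the estimates are uniform in $t$ in the stated form.
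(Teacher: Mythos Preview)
Your proof is correct and follows essentially the same route as the paper: the first estimate comes from Lemma~\ref{lem:ordlcom} fed into Theorem~\ref{thm-commutator}, and the second from factoring $R^{2k+\beta}e^{-itL}=(R^{2k}e^{-itL})R^{\beta}$, applying Proposition~\ref{prop-boundednessofRgamma} and the embedding $X^{p_{0},2}\hookrightarrow L^{p_{0}}$, then dualizing and interpolating. Your explicit use of the $t$-uniform $L^{2}\to L^{2}$ bound as a middle node in the interpolation is a genuine improvement in clarity over the paper's terse ``by duality and interpolation'': as you observe, interpolating directly between the $p_{0}$ and $p_{0}'$ endpoints would only yield the constant exponent $n(1/p_{0}-1/2)$ rather than $n|1/p-1/2|$.
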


\begin{proof}%[of ...]
  The first result is a direct application of Lemma \ref{lem:ordlcom}
  and Theorem \ref{thm-commutator}. Moreover, by Proposition
  \ref{prop-boundednessofRgamma} we have
  \begin{equation*}
    \|R^{2k+\beta}e^{-itL}\|_{X^{p_{0},2}\to L^{p_{0}}}
    \lesssim
    \|R^{2k}e^{-itL}\|_{X^{p_{0},2}\to X^{p_{0},2}}
    \|R^{\beta}\|_{X^{p_{0},2}\to L^{p_{0}}}
    \lesssim
    (1+|t|)^{n(1/p_{0}-1/2)}
  \end{equation*}
  and by the embedding $X^{p_{0},2}\subset L^{p_{0}}$
  we obtain
  \begin{equation*}
    \|R^{2k+\beta}e^{-itL}\|_{L^{p_{0}}\to L^{p_{0}}}
    \lesssim
    (1+|t|)^{n(1/p_{0}-1/2)}.
  \end{equation*}
  Finally, by duality and interpolation, we obtain the second claim.
\end{proof}

We can now conclude the proof of our main results
(Theorems \ref{the:second} and \ref{the:third}):

\begin{thm}[]\label{the:final}
  Assume that $L$ satisfies {\bf (L)}. 
  Let $p\in [p_0,p'_0]$ and $s=n\left|\f{1}{2}-\f{1}{p}\right|$. 
  Then we have the following estimate
  \begin{equation*}    
    \|e^{-itL}\varphi(L)f\|_{p}\les (1+|t|)^s\|f\|_p,
    \qquad t\in \mathbb{R},
  \end{equation*}
  uniformly for $\varphi$ in bounded subsets of 
  $\mathscr{S}(\mathbb{R})$
  (or, more generally, in bounded subsets for the norm
  \eqref{eq:weightedsob}).

  If  Assumption \textbf{(L$_{1}$)} holds,
  we have
  \begin{equation*}
    \|e^{-itL}\varphi(\theta L)f\|_{p}\les 
    (1+\theta^{-1}|t|)^s\|f\|_p,
    \qquad \theta>0, \quad t\in \mathbb{R},
  \end{equation*}
  and the estimate is uniform for $\theta$ in bounded subsets of
  $(0,+\infty)$
  and $\varphi$ in bounded subsets of $\mathscr{S}(\mathbb{R})$
  (or, more generally, in bounded subsets for the norm
  \eqref{eq:weightedsob}).
  If in addition we assume $\kappa=n$ and $M_{0}=0$,
  the estimate is uniform also for all $\theta>0$.
\end{thm}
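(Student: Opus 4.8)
The plan is to decompose $\varphi(L)e^{-itL}$ via the resolvent factor $R^{2k}$ with $k=\lfloor n/2\rfloor+1$, combining the boundedness of $\varphi(L)$ times a large resolvent power (Lemma \ref{lem:phiL2}) with the growth estimate for $R^{2k}e^{-itL}$ on amalgam spaces (Lemma \ref{lem:boundexp}). Concretely, I would write, for $p\in[p_0,p_0']$ and a suitable exponent $\beta>0$,
\begin{equation*}
  e^{-itL}\varphi(L)=\bigl[(L+M_{1})^{2k+\beta}\varphi(L)\bigr]\cdot\bigl[R^{2k+\beta}e^{-itL}\bigr],
\end{equation*}
choosing $\beta$ so that $\beta+2k>\frac{n}{m_1}(\frac{1}{p_0}-\frac12)$ as required in Lemma \ref{lem:boundexp}, and so that the function $\widetilde\varphi(\lambda)=(\lambda+M_1)^{2k+\beta}\varphi(\lambda)$ still has finite $\bbar\cdot\bbar_N$-norm (with $N=\lfloor n/p_0\rfloor+1$), which is automatic if $\varphi$ ranges in a bounded subset of $\mathscr S(\mathbb R)$, or of the weighted Sobolev space with norm \eqref{eq:weightedsob}. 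The first factor is bounded $L^p\to L^p$ by Lemma \ref{lem:phiL2} (applied to $\widetilde\varphi$, absorbing the extra $(L+M_1)^{\beta}$ into the $\beta$-shift there), with norm controlled by $\bbar\cdot\bbar_N$ of $\widetilde\varphi$; the second factor is bounded $L^p\to L^p$ with norm $\les (1+|t|)^{n|1/p-1/2|}=(1+|t|)^s$ by the second estimate in Lemma \ref{lem:boundexp}. Composing the two gives the first assertion, uniformly in $\varphi$ as stated.

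For the second assertion, under Assumption \textbf{(L$_1$)}, I would reduce the rescaled estimate to the unscaled one exactly as in the proof of Theorem \ref{the:phiLscal}. It suffices to treat $\theta=2^{-m\gamma}$, $\gamma\in\mathbb Z$; passing to the rescaled space $(\overline{\mathcal X},\overline d,\overline\mu)=(\mathcal X,2^\gamma d,2^{n\gamma}\mu)$ and the operator $\overline L=\theta L$, one checks (as already done in that proof) that $\overline L$ satisfies Assumption \textbf{(L)} with constants uniform in $\gamma$ provided $\gamma\ge\gamma_0$ (equivalently $\theta$ bounded above), and uniform in all $\gamma\in\mathbb Z$ when $\kappa=n$ and $M_0=0$. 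Since $L^p$ norms on $\overline{\mathcal X}$ differ from those on $\mathcal X$ only by the scalar factor $2^{n\gamma/p}$, which cancels on both sides of the inequality, the first assertion applied to $\overline L$, together with the identity $e^{-it\overline L}\varphi(\overline L)=e^{-i(\theta t)L}\varphi(\theta L)$ and the observation that the time variable for $\overline L$ is $\overline t=\theta t$ (so $(1+|\overline t|)^s$ should be rewritten — wait, here one uses that the natural time for $\overline L=\theta L$ is such that $e^{-it\overline L}=e^{-i(\theta t)L}$, hence the factor becomes $(1+|t|)^s$ in the $\overline t$-variable, i.e. $(1+\theta^{-1}|t|)^s$ is what one must arrive at), yields
\begin{equation*}
  \|e^{-itL}\varphi(\theta L)f\|_p\les(1+\theta^{-1}|t|)^s\|f\|_p.
\end{equation*}
I should double check the bookkeeping of the time scaling: writing $e^{-itL}\varphi(\theta L)=e^{-i(t/\theta)\,\overline L}\varphi(\overline L)$ and applying the unscaled estimate in the $\overline{\mathcal X}$ setting produces the factor $(1+|t/\theta|)^s=(1+\theta^{-1}|t|)^s$, which is exactly the claim.

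The main obstacle is the uniformity in $\varphi$ and in $\theta$ simultaneously, i.e. keeping all implicit constants in Lemmas \ref{lem:phiL2} and \ref{lem:boundexp} explicitly under control as functions of $M_0,M_1$ and of the $\bbar\cdot\bbar_N$-type norms — in particular verifying that after rescaling the constant $M_1$ stays bounded (as arranged by the choice $M_1=2^{-m\gamma_0}(M_0+1)$, or $M_1=1$ when $M_0=0$) and that the $\bbar\cdot\bbar_N$-norm of the rescaled multiplier $\widetilde\varphi$ does not deteriorate. All of this is already present in the proofs of Lemma \ref{lem:phiL2} and Theorem \ref{the:phiLscal}, so the present argument is essentially a matter of assembling those pieces with careful attention to the time- and frequency-rescaling exponents; no genuinely new estimate is needed.
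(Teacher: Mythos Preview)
Your proposal is correct and follows essentially the same approach as the paper: factor $e^{-itL}\varphi(L)$ as a product of $R^{2k+\beta}e^{-itL}$ (handled by Lemma~\ref{lem:boundexp}) and $(L+M_1)^{2k+\beta}\varphi(L)$ (handled by Lemma~\ref{lem:phiL2}), then for the rescaled version invoke the same scaling argument as in Theorem~\ref{the:phiLscal}. One small slip: the hypothesis in Lemma~\ref{lem:boundexp} is $\beta>\tfrac{n}{m_1}(\tfrac{1}{p_0}-\tfrac12)$, not $\beta+2k>\tfrac{n}{m_1}(\tfrac{1}{p_0}-\tfrac12)$; this is harmless since you may simply choose $\beta$ large enough, but you should state the condition correctly.
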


\begin{proof}%[of ...]
  For the first claim it is sufficient to write
  \begin{equation*}
    e^{-itL}\varphi(L)=
    (I+L)^{2k+\beta}e^{-itL} \cdot
    (I+L)^{-2k-\beta}\varphi(L)
  \end{equation*}
  and use the previous Lemma and Lemma \ref{lem:phiL2}. 
  The second claim is proved by a rescaling argument exactly as
  in the proof of Theorem \ref{the:phiLscal}.
\end{proof}

\begin{proof}[Proof of Theorem \ref{the:fourth}:] Since the proof
is quite similar to that of Theorem \ref{the:third}, 
we just sketch the main steps.

Denote by $p^{\mathcal{X}}_t(x,y)$ the kernel of 
$1_\Omega e^{-tL}1_\Omega$, regarded as an operator on 
functions defined on the entire space $\mathcal{X}$.
Then it is easy to see that
\[
p^{\mathcal{X}}_t(x,y)=\begin{cases}
p_t(x,y), \ \ &\text{if $x,y\in \Omega$},\\
0, \ \ &\text{otherwise}.
\end{cases}
\]
This, along with \eqref{Assumption-Subdomain}, implies
\begin{equation}\label{eq-ptX}
|p^{\mathcal{X}}_t(x,y)|\leq \f{Ce^{M_0t}}{\mu(B(x,t{^{1/m}}))}\exp\Big(-\f{d(x,y)^{m/(m-1)}}{ct^{1/(m-1)}}\Big)
\end{equation}
for all $t>0$ and $x,y\in \mathcal{X}$.

As a consequence, the assumption \eqref{eq1-assumption} and \eqref{eq2-assumption} hold true with $m_1=m_2=m$ and  $1_\Omega e^{-tL}1_\Omega$ taking place of $e^{-tL}$.
	
Arguing similarly to the proof of Proposition \ref{pro:weakerL},  $\nu\in\mathbb{Z}$ with $2^{-\nu}\leq t^{1/m}<2^{-\nu+1}$, there exist $C\ge0$ such that
	for all $t$ and $\nu$ as above we have
	\begin{equation}\label{eq2-LOmega}
	\|{\rm Ad}^{k}_{x}(1_\Omega e^{-tL}1_\Omega)\|_{2\to2}\le
	Ce^{M_0t}2^{-k\nu},
	\qquad
	0\le k\le\lfloor n/2\rfloor+1,\qquad x\in \mathcal{X}.
	\end{equation}
We then argue as in the proof of Proposition \ref{pro:commR} to find that for all $M>M_0$ we have,
	for all $z\in \mathcal{X}$ and
	$0\le k\le\lfloor n/2\rfloor+1$,
	\begin{equation}\label{eq3-LOmega}
	\|{\rm Ad}^k_{z}(1_\Omega(L+M)^{-1}1_\Omega)\|_{2\to 2}
	\lesssim
	(M-M_0)^{-1-\f{k}{m}}
	\end{equation}
	with a constant independent of $z,M$, where $w(x,y)=d(x,y)$.

The argument used in the proof of Proposition \ref{prop-boundednessofRgamma} allows us to obtain that for  $M>M_0$ and
	$\gamma=\f{n}{2m}+\epsilon$,
	with $\epsilon>0$. Then
	$$
	\textstyle
	\|1_\Omega(M+L)^{-\gamma}1_\Omega f\|_{X^{1,2}}\le
	C
	\Bigl(
	\epsilon ^{-1}
	+
	(M-M_0)
	^{\gamma+\f{n-\kappa}2{m}}
	\Bigr)
	\|f\|_{1}.
	$$
Fix $M_1>M_0$ and set $R=(M_1+L)^{-1}$. Then we can verify that 
\begin{equation*}
{\rm Ad}_{z}(1_\Omega e^{-i \xi R}1_\Omega)=
-i\int_{0}^{\xi}
1_\Omega e^{-is R}{\rm Ad}_{z}(1_\Omega R 1_\Omega)e^{-i(\xi-s)R} 1_\Omega ds.
\end{equation*}
Hence, similarly to Lemma \ref{lem:eixiR}, we obtain
	\begin{equation*}
	\|1_\Omega e^{-i \xi R} 1_\Omega f\|_{X^{1,2}}\le
	C(1+|\xi|)^{n/2}\|f\|_{_{X^{1,2}}},
	\qquad
	\xi\in \mathbb{R}.
	\end{equation*}
This, along with the identity
\begin{equation*}
1_\Omega \psi(R)1_\Omega =(2\pi)^{-1}\int 1_\Omega e^{i \xi R}1_\Omega \widehat{\psi}(\xi)d\xi,
\end{equation*}
implies that
	\begin{equation}\label{eq4-LOmega}
	\|1_\Omega \psi(R)1_\Omega f\|_{X^{1,2}}\le
		C\|(1+|\xi|)^{n/2}\widehat{\psi}(\xi)\|_{L^{1}} 
	\|f\|_{X^{1,2}}
	\end{equation}
for any sufficiently smooth funtion $\psi$ on $\mathbb{R}$.

Arguing similarly as in Theorem \ref{the:phiLscal},  for all $\theta>0$ we have
	\begin{equation*}
	\|1_\Omega\varphi(\theta L)1_\Omega f\|_{L^{p}}\le C\|f\|_{L^{p}}
	\end{equation*}
	and the estimate is uniform for 
	$\varphi$ in bounded subsets of $\mathscr{S}(\mathbb{R})$.	Moreover, if  $\kappa=n$, then
	the estimate is uniform for all $\theta>0$.

As this stage, arguing, mutatis mutandis, as in the proof of Theorem \ref{the:third} we obtain that for any $p\in [1,\vc]$  and $s=n\Big|\f{1}{2}-\f{1}{p}\Big|$, 
\begin{equation*}
\|1_\Omega e^{-itL}\varphi(\theta L)1_\Omega f\|_{p}\les (1+\theta|t|)^s\|f\|_{p},
\qquad t\in \mathbb{R},
\end{equation*}
and the estimate is uniform for $\varphi$ in bounded subsets
of $\mathscr{S}(\mathbb{R})$ and $0<\theta\le \theta_{0}$,
for any fixed $\theta_{0}>0$. If, in addition,
$\kappa=n$ and $M_0$, then
the estimate is uniform for all $\theta>0$. This completes our proof.

\end{proof}
\section{Applications}\label{sec:appl}

Our framework is sufficiently general to include a large variety
of applications; in this section we survey a few of the most
interesting cases.

\subsection{Laplace-Beltrami operators with a 
Gaussian heat kernel bound}\label{Subsec4.1}

Let $\mathcal{X}$ be  a  complete  connected  non-compact  
$n$-dimensional
Riemannian  manifold. The  geodesic distance and the 
Riemannian measure are denoted by $d$ and $\mu$, respectively.  
The Laplace-Beltrami operator $L = -\Delta$ on $\mathcal{X}$ 
is nonnegative and self--adjoint. 

We assume that the Riemannian measure $\mu$ satisfies the volume
doubling property \eqref{doublingpro} and the {\it non-collapsing}
condition
\begin{equation}\label{non-collapsing-app1bis}
  \mu(B(x,1))\geq c
\end{equation}
for all $x\in \mathcal{X}$ and for some fixed constant $c>0$.

It is well-known (see \cite{LY}) that if the Ricci
curvature of $\mathcal{X}$ is non-negative, then the heat kernel 
of the heat semigroup $e^{-tL}$ satisfies the estimate
\begin{equation}\label{GU-Laplace}
  e^{-tL}(x,y)\les \f{1}{\mu(B(x,\sqrt{t}))}
  \exp\Big(-\f{d(x,y)^2}{ct}\Big).
\end{equation}
It can be verified that the Gaussian upper bound 
\eqref{GU-Laplace} implies \eqref{eq2-assumption}. Moreover, 
the upper bound \eqref{GU-Laplace} also yields that for 
$\nu\in\mathbb{Z}$ and $2^{-\nu}\leq t^{1/2}<2^{-\nu+1}$, we have
$$
  \sum_{Q\in \mathcal{D}_\nu}\|1_Q e^{-tL}1_{Q'}
  \|_{1\to \vc}\leq C\mu(Q')^{-1}, 
  \ \text{for all $Q'\in \mathcal{D}_\nu$}.
$$
This, in combination with the non-collapsing condition and 
\eqref{doublingpro1}, implies that
$$
  \sum_{Q\in \mathcal{D}_\nu}\|1_Q e^{-tL}1_{Q'}
  \|_{1\to \vc}\leq C(1+ 2^{\nu n}), 
  \ \text{for all $Q'\in \mathcal{D}_\nu$}
$$
and this proves \eqref{eq1-assumption} and \eqref{eq2-assumption}.

Hence, Assumption \textbf{(L$_0$)} is satisfied with 
$m_1=m_2=2$ and $p_0=1$.

\subsection{Laplace-Beltrami operators without Gaussian heat 
kernel bound}

Let $\mathcal{X}$ be  a  complete  connected  non-compact  
Riemannian  manifold. The  geodesic distance and the Riemannian 
measure are denoted by $d$ and $\mu$, respectively. We assume 
that the Riemannian measure $\mu$ satisfies the volume 
doubling property \eqref{doublingpro} and the {
\it non-collapsing} condition \eqref{non-collapsing-app1}.

Let $L=-\Delta$ be the
non-negative Laplace-Beltrami operator on $\mathcal{X}$. We 
assume that the kernel $e^{-tL}(x,y)$ of the semigroup 
$e^{-tL}$ satisfies the following sub-Gaussian heat kernel 
upper estimate with exponent $m>0$
\begin{equation}\label{subGassianEs}
  e^{-tL}(x,y)\leq 
  \begin{cases}
    \f{C}{\mu(B(x,\sqrt{t}))}\exp\Big(\f{d(x,y)^2}{ct}\Big), 
    & 0<t<1
    \\
    \f{C}{\mu(B(x,t^{1/m}))}
    \exp\Big(\f{d(x,y)^{m/(m-1)}}{ct^{1/(m-1)}}\Big), 
    & t\geq 1
  \end{cases}
\end{equation}
for all $x,y\in \mathcal{X}$.

Typical examples that satisfy  \eqref{doublingpro}, 
\eqref{non-collapsing-app1} and \eqref{subGassianEs} include 
certain fractal manifolds and infinite connected locally 
finite graphs. For further details, we refer to \cite{BCG,CCFR}.

By a similar argument as in Subsection \ref{Subsec4.1} one can 
prove that $L$ satisfies Assumption {\bf{(L$_0$)}} with 
$m_1=2$, $m_2=m$ and $p_0=1$.

\subsection{Sierpinski gasket SG in \texorpdfstring{$\mathbb{R}^n$}{}}

Let $\mathcal{X}$ be the unbounded Sierpinski gasket SG in 
$\mathbb{R}^n$. Let $d$ be the induces metric on SG and $\mu$ 
be the Hausdorff measure on SG of dimension $\alpha=\log_2(n+1)$. 
It is well-known that the Hausdorff measure $\mu$ satisfies 
the doubling property \eqref{doublingpro}; moreover,  
\begin{equation}\label{volume-fractal}
  \mu(B(x,r))\les r^\alpha,
\end{equation}
for all $x\in \mathcal{X}$ and $r>0$.

It was also proved in \cite{Ba} that SG admits a local Dirichlet 
form $\mathcal{E}$ which generates a nonnegative 
self--adjoint operator $L$; moreover, the kernel $e^{-tL}(x,y)$ 
of $e^{-tL}$ satisfies the sub-Gaussian estimate
$$
  e^{-tL}(x,y)\les \f{1}{t^{\alpha/m}}
  \exp\Big(-\f{d(x,y)^{m/(m-1)}}{ct^{1/(m-1)}}\Big)
$$
where $m=\log_2(n+3)$ is called the \emph{walk dimension}.

Note that the assumption \eqref{eq2-assumption} is a direct 
consequence of the kernel upper bound above whereas 
the assumption \eqref{eq1-assumption} is a consequence 
of the same kernel upper bound, 
the doubling property \eqref{doublingpro} and 
\eqref{volume-fractal}. Theorefore, $L$ satisfies 
Assumption {\bf{(L$_0$)}} with $m_1=m_2=m$ and  $p_0=1$.

\subsection{Homogeneous groups}

Let ${\bf G}$  be a Lie group of polynomial growth and let 
$X_1, ..., X_k$ be a system of 
left-invariant vector fields on ${\bf G}$ satisfying the 
H\"ormander condition. We define
the Laplace operator $L$ on $L^2({\bf G})$ by 
\begin{equation}\label{defnL-homogeneousgroups}
  L=-\sum_{i=1}^k X_i^2.
\end{equation}  
Denote by $d$ the distance associated with the system $X_1, ..., X_k$,
and let $B(x,r)$ be the corresponding balls.
Then (see \cite{VSC}) there exist positive numbers 
$d, D\geq 0$ such that
\begin{equation}\label{volume-homogroups}
  \mu(B(x,r))\sim
  \begin{cases}
    r^{d}, \  \ \ &r\leq 1\\
    r^{D},\  \ \ &r> 1.
  \end{cases}
\end{equation}
Hence $({\bf G}, d,\mu)$ satisfies the doubling property 
\eqref{doublingpro}.

${\bf G}$ is called a \emph{homogeneous group} (see \cite{FS})
if there exists a \emph{family of dilations} 
$({\delta }_t)_{t>0}$ on ${\bf G}$,
that is to say, a one-parameter group
(${\delta }_t \circ {\delta }_t={\delta }_{t s}$)
of automorphisms of ${\bf G}$ determined by
\begin{equation}\label{e6.2}
  {\delta }_t Y_j =t^{d_j} Y_j ,
\end{equation} 
where $Y_1, ..., Y_{\ell}$ is a linear basis of the Lie algebra of 
${\bf G}$ and $d_j\geq 1$ for $1\leq j\leq \ell$. 
We say that the operator $L$ defined by 
\eqref{defnL-homogeneousgroups}   is \emph{homogeneous} if  
${\delta }_t X_i =t Y_i $ for $1\leq i\leq k$. 
It well known that
the heat kernel of the heat semigroup $e^{-tL}$ satisfies 
the estimate
$$
  e^{-tL}(x,y)\les \f{1}{\mu(B(x,\sqrt{t}))}
  \exp\Big(-\f{d(x,y)^2}{ct}\Big).
$$
This upper bound together with \eqref{volume-homogroups} implies 
that $L$ satisfies \eqref{eq1-assumption} and \eqref{eq2-assumption} with $m_1=m_2=2$ and  
$p_0=1$, and hence $L$ satisfies Assumption {\bf{(L$_0$)}} with $m_1=m_2=2$ and  
$p_0=1$.

\subsection{Bessel operators}

Let $\mathcal{X}=((0,\vc)^m, d\mu(x))$ where 
$d\mu(x)=d\mu_1(x_1)\ldots d\mu_n(x_m)$
and $d\mu_k = x_k^{\al_k}dx_k$, $\alpha_k >-1$, for 
$k=1,\ldots, m$ ($dx_{j}$ being the one dimensional Lebesgue measure).
We endow $\mathcal{X}$ with the distance $d$ defined for
$x=(x_1,\ldots,x_m)$ and $y=(y_1,\ldots,y_m) \in \mathcal{X}$ as
\begin{equation*}
  d(x,y):=|x-y|=\Big(\sum_{k=1}^m|x_k-y_k|^2\Big)^{1/2}.
\end{equation*}
Then it is clear that 
\begin{equation}\label{volume-Bessel}
  \mu(B(x,r))\sim r^m
  \prod_{k=1}^m (x_k+r)^{\alpha_k}.
\end{equation}
Note that this estimate implies the doubling property \eqref{doublingpro} with $n=m+\al_1+\ldots+\al_n$ and the non-collapsing condition \eqref{non-collapsing-app1}.

For an element $x\in \mathbb{R}^m$, unless specified otherwise,
we shall write $x_k$ for the $k$-th component of $x$, 
$k=1,\ldots,m$. Moreover, for $\lambda\in \mathbb{R}^m$, 
we write $\lambda^2=(\lambda^2_1,\ldots, \lambda_m^2)$.

We consider the second order Bessel differential operator
$$
  L=-\Delta-\sum_{k=1}^m \f{\al_k}{x_k}\f{\partial}{\partial x_k}
$$
whose system of eigenvectors is defined by
$$
  E_\lambda(x):= \prod_{k=1}^nE_{\lambda_k}(x_k),
  \quad
  E_{\lambda_k}(x_k):=
  (x_k\lambda_k)^{-(\al_k-1)/2}J_{(\al_k-1)/2}
  (x_k\lambda_k),
  \quad
  \lambda, x\in \mathcal{X}
$$
where $J_{(\al_k-1)/2}$ is the Bessel function of the first 
kind of order $(\al_k-1)/2$ (see \cite{L}). 
It is known that $L(E_\lambda)=|\lambda|^2 E_{\lambda}$. 
Moreover, the functions $E_{\lambda_k}$ are  eigenfunctions 
of the one-dimension Bessel operators
$$
  L_k=-\f{\partial^2}{\partial x_k\,^2}-\f{\al_k}
  {x_k}\f{\partial}{\partial x_k}
$$
and indeed $L_k(E_{\lambda_k})=\lambda_k^2 E_{\lambda_k}$ for 
$k=1,\ldots, m$.

It is well known that $L$ is nonnegative and self--adjoint; 
moreover, the kernel $e^{-tL}(x,y)$ of $e^{-tL}$ satisfies 
the Gaussian estimate
\begin{equation}\label{Gaussian-BesselOperator}
  e^{-tL}(x,y)\les \f{1}{\mu(B(x,\sqrt{t}))}
  \exp\Big(-\f{d(x,y)^2}{ct}\Big).
\end{equation}

Hence, the Gaussian upper bound \eqref{Gaussian-BesselOperator}, along with the doubling and the non-collapsing 
properties imply Assumption {\bf{(L$_0$)}} with 
$m_1=m_2=m$ and  $p_0=1$.

\subsection{Schr\"odinger operators with real potentials on manifolds}

Let $\mathcal{X}$ be  a  complete  connected  non-compact  
Riemannian  manifold. The  geodesic distance and the Riemannian 
measure are denoted by $d$ and $\mu$, respectively.  We assume that 
the Riemannian measure $\mu$ satisfies the doubling property 
\eqref{doublingpro} and the non-collapsing condition 
\eqref{non-collapsing-app1}.
We also assume that the heat kernel $p_t(x,y)$ of the
Laplace-Beltrami operator $-\Delta$ satisfies the standard 
Gaussian upper bound
\begin{equation}\label{GaussianUperbound}
p_t(x,y)\leq \f{C}{\mu(B(x,\sqrt{t}))}\exp\Big(-\f{d^2(x,y)}{ct}\Big).
\end{equation}

We now consider the Schr\"odinger operator 
$L=-\Delta+V$, $V\in L^1_{{\rm loc}}(\mathcal{X})$.
If the potential $V$ is nonnegative, then the  kernel of
the semigroup $\{e^{-tL}\}_{t>0}$ generated by $L$ satisfies 
the same Gaussian bound (\ref{GaussianUperbound}); 
in the general case, we must impose some conditions on the
negative part of $V$.
Denote by $V^+$ and $V^-$ the positive and negative parts 
of $V$, respectively. We define
$$
\mathcal{Q}(u,v)=\int_{\mathcal{X}}\nabla u\nabla v d\mu + \int_{\mathcal{X}}V^+ uvd\mu-
\int_{\mathcal{X}}V^- uvd\mu
$$
with domain
$$
\mathcal{D}(\mathcal{Q})=\{u\in W^{1,2}(\mathcal{X}): \int_{\mathcal{X}}V^+u^2 d\mu <\vc\}.
$$
Then we assume that the positive part $V^+\in L^1_{{\rm{loc}}}$ and  the negative part $V^-$ satisfy the following condition
\begin{equation}\label{AssumptiononV}
  \int_{\mathcal{X}} V^-u^2d\mu\leq 
  \alpha\Big[\int_\mathcal{X}|\nabla u|^2d\mu 
    +\int_\mathcal{X}
  V^+u^2d\mu\Big], \forall u \in \mathcal{D}(\mathcal{Q}),
\end{equation}
for some $\alpha\in (0,1)$.
  
It was proved in \cite[Theorem 3.4]{AO} that for any 
$(\f{2}{1-\sqrt{1-\alpha}})'<p_0<2$ there exist $C, c>0$ 
and $\beta>0$ such that  
$$
  \|1_{B(x,r)}e^{-sL}1_{B(y,r)}\|_{p_0\to p_0'}\leq 
  C\mu(B(x,r))^{-\f{1}{p_0}+\f{1}{p'_0}}\Big(\max
  \Big(\f{r}{\sqrt{s}},\f{\sqrt{s}}{r}\Big)
  \Big)^\beta \exp\Big(-\f{{\rm dist}(B(x,r),B(y,r))^2}{ct}\Big)
$$
for all $r,s>0$ and $x,y\in \mathcal{X}$.

This, in combination with the volume doubling property 
\eqref{doublingpro} and the non-collapsing condition \eqref{non-collapsing-app1}, implies that 
Assumption {\bf{(L$_0$)}} is satisfied with $m_1=m_2=2$ and any 
$(\f{2}{1-\sqrt{1-\alpha}})'<p_0<2$.

\subsection{Schr\"odinger operators with inverse-square potentials}

Consider the following Schr\"odinger operators with inverse square
potential on $\mathbb{R}^n$, $n\geq 3$:
\begin{equation}\label{defn-La}
  \mathcal{L}_a = -\Delta+\f{a}{|x|^2} \quad 
  \text{with} \quad a\geq -\Big(\f{n-2}{2}\Big)^2.
\end{equation}
Set
$$
\sigma:=\f{n-2}{2}-\f{1}{2}\sqrt{(n-2)^2+4a}.
$$
The Schr\"odinger operator  $\mathcal{L}_a$ is understood as the 
Friedrichs extension of $-\Delta+\f{a}{|x|^2}$  defined initially 
on $C^\vc_c(\mathbb{R}^n\backslash\{0\})$. The condition 
$a\geq -\Big(\f{n-2}{2}\Big)^2$ guarantees that $\mathcal{L}_a$ 
is nonnegative.
It is well-known that $\mathcal{L}_a$ is self--adjoint and the 
extension may not be unique as 
$-\Big(\f{n-2}{2}\Big)^2\le a<1-\Big(\f{n-2}{2}\Big)^2$. 
For further details, we refer the readers to \cite{Ka, PST, T}.
For the corresponding heat kernel, we have the following result:

\begin{thm}[\cite{MS, LS}]\label{thm-heatkernelLa}
  Assume $n\geq 3$ and $a\geq -\Big(\f{n-2}{2}\Big)^2$. 
  Then there exist two positive constants $C$ and $c$ such 
  that for all $t>0$ and $x,y \in \mathbb{R}^n\backslash \{0\}$,
  $$
    p_t(x,y)\leq C\Big(1+\f{\sqrt{t}}{|x|}
    \Big)^\sigma\Big(1+\f{\sqrt{t}}{|y|}\Big)^\sigma 
    t^{-n/2}e^{-\f{|x-y|^2}{ct}}.
  $$
\end{thm}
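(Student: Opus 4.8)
The plan is to remove the inverse-square singularity by a ground-state ($h$-)transform and then reduce the estimate to a Gaussian heat-kernel bound for a weighted Laplacian, in the spirit of the desingularizing-weight method of \cite{MS, LS}. First I would note that $h(x):=|x|^{-\sigma}$ is a positive solution of $\mathcal{L}_ah=0$ on $\mathbb{R}^n\setminus\{0\}$: one computes $-\Delta|x|^{-\sigma}=\sigma(n-2-\sigma)|x|^{-\sigma-2}$, and since $\sigma$ is by definition a root of $\sigma^2-(n-2)\sigma-a=0$, i.e. $\sigma(n-2-\sigma)=-a$, the two terms cancel. Conjugating by $h$ turns $\mathcal{L}_a$ into
\begin{equation*}
  \mathcal{L}_a^h u:=h^{-1}\mathcal{L}_a(hu)=-\Delta u+2\sigma\,\frac{x}{|x|^{2}}\cdot\nabla u=-\frac{1}{w}\operatorname{div}(w\,\nabla u),\qquad w(x)=|x|^{-2\sigma},
\end{equation*}
a nonnegative self--adjoint operator on $L^2(\mathbb{R}^n,w\,dx)$ that annihilates constants; here the hypothesis $a\ge-((n-2)/2)^2$, equivalently $\sigma\in\mathbb{R}$, is what makes $h$ well defined, and the Friedrichs extension of $\mathcal{L}_a$ corresponds to the natural form domain of $\mathcal{L}_a^h$.

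The analytic core is then a Gaussian upper bound for $e^{-t\mathcal{L}_a^h}$ on the doubling space $(\mathbb{R}^n,|x-y|,w\,dx)$: writing $q_t(x,y)$ for its kernel and $V_h(x,r):=\int_{B(x,r)}w\,dy\simeq r^{n}(r+|x|)^{-2\sigma}$, one wants
\begin{equation*}
  q_t(x,y)\le \frac{C}{V_h(x,\sqrt t)}\exp\!\Big(-\frac{|x-y|^{2}}{ct}\Big).
\end{equation*}
For $-n/2<\sigma\le(n-2)/2$ the power weight $w$ belongs to the Muckenhoupt class $A_2$, so this bound follows from the volume doubling of $(\,\cdot\,,w\,dx)$ together with the weighted Poincaré inequality of Fabes--Kenig--Serapioni, through the Grigor'yan--Saloff-Coste equivalence (doubling $+$ Poincaré $\Longleftrightarrow$ two-sided Gaussian bounds). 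In the remaining range $\sigma<-n/2$ (large positive $a$), where $w$ is no longer $A_2$, I would instead decompose $L^2(\mathbb{R}^n)$ into spherical harmonics, on the degree-$k$ sector of which $\mathcal{L}_a$ becomes the Bessel operator $-\partial_r^2-\tfrac{n-1}{r}\partial_r+\tfrac{\nu_k^2-((n-2)/2)^2}{r^2}$ with $\nu_k=\sqrt{(k+\tfrac{n-2}{2})^2+a}$, whose heat kernel is explicit in terms of the modified Bessel function $I_{\nu_k}$, and then sum the series using the uniform large-order asymptotics of $I_\nu$.

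Finally I would undo the transform. Since $p_t(x,y)=h(x)h(y)\,q_t(x,y)$, a doubling/volume-comparison argument — using $V_h(x,\sqrt t)\simeq V_h(y,\sqrt t)$ when $|x-y|\le\sqrt t$ and absorbing the polynomial loss into a slightly enlarged Gaussian constant otherwise — yields
\begin{equation*}
  p_t(x,y)\lesssim|x|^{-\sigma}|y|^{-\sigma}\,\frac{1}{\sqrt{V_h(x,\sqrt t)\,V_h(y,\sqrt t)}}\exp\!\Big(-\frac{|x-y|^{2}}{ct}\Big)\simeq t^{-n/2}\Big(1+\frac{\sqrt t}{|x|}\Big)^{\sigma}\Big(1+\frac{\sqrt t}{|y|}\Big)^{\sigma}\exp\!\Big(-\frac{|x-y|^{2}}{ct}\Big),
\end{equation*}
where I used $V_h(x,\sqrt t)\simeq t^{n/2}(\sqrt t+|x|)^{-2\sigma}$ and the identity $|x|^{-\sigma}(\sqrt t+|x|)^{\sigma}=(1+\sqrt t/|x|)^{\sigma}$; this is exactly the asserted bound. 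The main obstacle is the middle step: establishing the Gaussian estimate for the degenerate/singular operator $\mathcal{L}_a^h$ uniformly in $a$, in particular down to the critical coupling $a=-((n-2)/2)^2$ (where $\sigma=(n-2)/2$ lies at the edge of the admissible range and the self-adjoint extension of $\mathcal{L}_a$ is no longer unique) and up through the threshold $\sigma=-n/2$ past which $w$ leaves $A_2$ and the soft Harnack machinery must give way to explicit special-function analysis — this delicate part is precisely what is carried out in \cite{MS, LS}, to which we refer for the full proof.
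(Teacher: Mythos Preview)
The paper does not prove this theorem at all: it is stated with the attribution \cite{MS, LS} in the header and then simply used, so there is no ``paper's own proof'' to compare against. Your sketch is therefore not competing with anything in the text; it is an outline of what the cited references do.

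As such, your outline is faithful to the Milman--Semenov desingularizing-weight strategy: the ground-state transform by $h(x)=|x|^{-\sigma}$, the identification of the conjugated operator as the weighted divergence-form Laplacian on $(\mathbb{R}^n,|x|^{-2\sigma}dx)$, and the recovery of $p_t$ from $q_t$ via $p_t(x,y)=h(x)h(y)q_t(x,y)$ are all correct, and the algebra checking $\mathcal{L}_a h=0$ is right. The main caveat is your handling of the range $\sigma<-n/2$: the spherical-harmonic/Bessel expansion you propose does give the kernel on each angular sector, but summing these pointwise to a Gaussian with the \emph{correct} singular prefactors $(1+\sqrt t/|x|)^{\sigma}$ uniformly in $a$ is genuinely nontrivial and is not a routine consequence of large-order Bessel asymptotics --- this is exactly where the hard work in \cite{MS, LS} lies, as you acknowledge. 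Since the paper itself defers entirely to those references, your sketch (with that honest deferral) is appropriate.
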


Set $n_\sigma=n/\sigma$ if $\sigma>0$ and $n_\sigma=\vc$ if 
$\sigma\leq 0$. From Theorem \ref{thm-heatkernelLa} and 
Theorem 3.1 in \cite{DA}, for any 
$n'_\sigma <p\leq q< n_\sigma$ there exist $C,c>0$ such that 
for every $t> 0$, any measurable subsets 
$E, F\subset \mathbb{R}^n$, and all $f\in L^p(E)$, we have:
\begin{equation}\label{eq1-Tt}
  \left\|e^{-tL}f\right\|_{L^q(F)}\leq 
  Ct^{-\f{n}{2}(\f{1}{p}-\f{1}{q})}e^{-\f{d(E,F)^2}{ct}}
  \|f\|_{L^p(E)}.
\end{equation} 
Hence, with the standard dyadic systems in $\mathbb{R}^n$, this implies that Assumption {\bf{(L$_0$)}} is satisfied with 
$m_1=m_2=2$ and any $n_\sigma'<p_0<2$. Moreover, in this situation the reverse doubling condition \eqref{RD-eq} is valid with $\kappa=n$.

\subsection{Sub-Laplacian operators on Heisenberg groups}
Let $\mathbb{H}^d$ be a $(2d + 1)$-dimensional Heisenberg group. Recall that a $(2d + 1)$-dimensional Heisenberg group  is a connected and simply connected
nilpotent Lie group with the underlying manifold $\mathbb{R}^{2d}\times \mathbb{R}$. The group structure is defined by
$$
(x, s)(y, t) = (x + y, s + t + 2 \sum_{j=1}^{d}(x_{d+j}y_j - x_jy_{d+j}))
$$
The homogeneous norm on $\mathbb{H}^d$ is defined by
$$
|(x, t)| = (|x|^4 + |t|^2)^{1/4}  \ \text{for all} \  (x, t) \in \mathbb{H}^d.
$$
See for example \cite{St}.

This norm satisfies the the triangle inequality and hence induces a left-invariant metric $d((x, t), (y, s)) = |(-x, -t)(y, s)|$. Moreover,
there exists a positive constant $C$ such that $|B((x, t), r)| = Cr^n$, where $n = 2d + 2$ is
the homogeneous dimension of $\mathbb{H}^d$ and $|B((x, t), r)|$ is the Lebesgue measure of the ball
$B((x, t), r)$. Obviously, the triplet $(\mathbb{H}^d, d, dx)$ satisfies the doubling condition \eqref{doublingpro}, the reverse doubling condition \eqref{RD-eq} with $\kappa=n$, and the {\it non-collapsing} condition \eqref{non-collapsing-app1}.

A basis for the Lie algebra of left-invariant vector fields on $\mathbb{H}^d$ is given by
$$
X_{2d+1}=\f{\partial}{\partial t}, X_{j}=\f{\partial}{\partial x_j}+2x_{d+j}\f{\partial}{\partial t}, X_{d+j}=\f{\partial}{\partial x_{d+j}}-2x_{j}\f{\partial}{\partial t}, \ \ j=1,\ldots,d.
$$
The sub-Laplacian $\Delta_{\mathbb{H}^d}$ is defined by
$$
\Delta_{\mathbb{H}^d}=-\sum_{j=1}^{2d}X_j^2.
$$
Furthermore, it is well-known that the sub-Laplacian $\Delta_{\mathbb{H}^d}$ satisfies the Gaussian upper bound:
$$
e^{-t\Delta_{\mathbb{H}^d}}((x,u),(y,s))\leq \f{C}{t^{n/2}}\exp\Big(-\f{d((x,u),(y,s)^2}{ct}\Big).
$$

In $\mathbb{H}^d$, we consider the standard dyadic system consists of the cubes
$$
2^{-k}((0,1]^{2d}+j)\times 4^{-k}((0,1]+\ell), k\in \mathbb{Z}, \ \ j\in \mathbb{Z}^{2d}, \ell\in \mathbb{Z}.
$$
Hence, the Gaussian upper bound yields the assumption \textbf{(L$_0$)}  with $m_1=m_2=2$ and $p_0=1$.

\subsection{Dirichlet Laplacians on Lipchitz domains}

Let $\mathcal{X}=(\mathbb{R}^n,|\cdot|, dx)$. Then $\mathcal{X}$ is a space of homogeneous type satisfying \eqref{RD-eq} with $\kappa=n$ and the {\it non-collapsing} condition \eqref{non-collapsing-app1}.

Let $\Omega$ be a connected open subset of $\mathbb{R}^n$. Note that $\Omega$ may not satisfy the doubling condition. Let $L=\Delta_\Omega$ be Dirichlet Laplacian on the domain $\Omega$. It is well known that the semigroup kernel 
$p_t(x,y)$ of $e^{-tL}$  satisfies the Gaussian upper bound
$$
  p_t(x,y)(x,y)\leq \f{1}{(4\pi t)^{n/2}}
  \exp\Big(-\f{|x-y|^2}{4t}\Big),
$$ 
for all $t>0$ and all $x,y\in \Omega$.

Hence, all assumptions in Theorem \ref{the:fourth} are satisfied with $\mathcal{X}=(\mathbb{R}^n,|\cdot|, dx)$, $L=\Delta_\Omega$ and $\kappa=n$.

\subsection{Schr\"{o}dinger operators with singular potentials}
\label{sub:sch}

For our last example, we recall the definition of the
\emph{Kato class} $K_{n}$ of potentials. The 
measurable function
$V:\mathbb{R}^{n}\to \mathbb{R}$ belongs to $K_{n}$
if the following conditions are satisfied:
\begin{enumerate}
  \item If $n\ge3$,
  \begin{equation*}
    \textstyle
    \lim_{\alpha \downarrow0}
    \sup_{x}
    \int_{|x-y|\le \alpha}
    |x-y|^{2-n}V(x)dx=0.
  \end{equation*}
  \item If $n=2$,
  \begin{equation*}
    \textstyle
    \lim_{\alpha \downarrow0}
    \sup_{x}
    \int_{|x-y|\le \alpha}
    \log(|x-y|^{-1})V(x)dx=0.
  \end{equation*}
  \item If $n=1$,
  \begin{equation*}
    \textstyle
    \sup_{x}
    \int_{|x-y|\le 1}V(x)dx<\infty.
  \end{equation*}
\end{enumerate}
Moreover, we say that $V\in K_{n,loc}$
if ${\bf 1}_{B}V\in K_{n}$ for all balls $B$.

We consider a Schr\"{o}dinger operator
of the form $L=-\Delta+V(x)$ on $\mathbb{R}^{n}$, $n\ge1$.
We assume that the positive part $V_{+}$ of $V$ is in $K_{n,loc}$
while the negative part $V_{-}$ is in $K_{n}$. Then the results of
\cite{Simon82-a} (see in particular Proposition B.6.7)
imply that $L$ can be realized as a
semibounded self--adjoint operator in $L^{2}(\mathbb{R}^{n})$,
and that the heat kernel $e^{-tL}$ satisfies
\begin{equation}\label{eq:gaussab}
  |e^{-tL}(x,y)|\le
  C t^{-n/2}e^{M_{0}t}e^{-\frac{|x-y|^{2}}{ct}}
\end{equation}
with $C,c>0$. Thus Assumption \textbf{(L$_{1}$)}
is satisfied, with $M_{0}\ge0$. If in addition we assume that
the negative part satisfies
\begin{equation}\label{eq:Vmeno}
  \textstyle
  \sup_{x}\int|x-y|^{2-n}V_{-}(y)dy<2\pi^{n/2}/\Gamma(n/2-1)
\end{equation}
in dimension $n\ge3$ (or $V_{-}=0$ in dimensions 1,2)
then in  \cite{DAnconaPierfelice05-a} it is proved that
one can take $M_{0}=0$, so that the uniform estimates
of Theorem \ref{the:third} apply.

Moreover, one can consider the same operator $L$ with
Dirichlet boundary conditions on $L^{2}(\Omega)$,
for an open subset $\Omega$ of $\mathbb{R}^{n}$.
If we assume for simplicity $V\ge0$, then by the maximum
principle we obtain that the heat kernel is nonnegative
and satisfies again the upper Gaussian estimate
\eqref{eq:gaussab}, with $M_{0}=0$ i.e. all the assumptions
of the second part of Theorem \ref{the:fourth} are
satisfied.

Similar results can be proved for the magnetic Schr\"{o}dinger
operators of the form $(i \nabla+A(x))^{2}+V(x)$,
using the heat kernel estimates proved in
\cite{DAnconaFanelliVega10-a}, and for elliptic operators
with fully variable coefficients on exterior domains,
via the results of \cite{CassanoDAncona15-a}. We omit the
details.

\subsection{Magnetic Schr\"odinger operator}\label{Subsec11}
Consider the magnetic Schr\"odinger operator on $\mathbb{R}^n$ defined by
\begin{equation*}
L=(i \nabla+A(x))^{2}+V(x),
\end{equation*}
with magnetic potential
$A=(A_{1},\dots,A_{n})$ and electric potential $V(x)$.

If we choose as weight function
$w(x,y)=x-y:\mathbb{R}^{2n}\to \mathbb{R}^{n}$ and $\mathscr D(w)=C_0^\vc(\mathbb{R}^n)$, then we have 
	\begin{equation*}
	\textstyle
	{\rm Ad}^{1}_{x}(L)=
	2\nabla+2iA
	,\qquad
	{\rm Ad}^{2}_{x}(L)=(2,\dots,2)
	\end{equation*}
	and
	\begin{equation*}
	{\rm Ad}^{j}_{x}(L)=0
	\ \text{for}\ 
	j\ge2.
	\end{equation*}
	The vector of operators ${\rm Ad}^{2}_{x}(L)R$
	is obviously bounded on $L^{2}$; since $R$ is also bounded
	from $L^{2}(\mathbb{R}^{n})$ to $H^{1}(\mathbb{R}^{n})$,
	if the magnetic potential satisfies
	\begin{equation*}
	\|Af\|_{L^{2}}\lesssim\|f\|_{H^{1}}
	\end{equation*}
	then also ${\rm Ad}^{1}_{x}(L)R$ is bounded on $L^{2}$. Moreover, by elementary computations one can write
	${\rm Ad}^{k}_{x}(R)$ as a linear combination of terms
	\begin{equation}\label{eq:adR}
	R \;{\rm Ad}^{k_{1}}_{x}(L)\;
	R \;{\rm Ad}^{k_{2}}_{x}(L)\;
	\dots
	R \;{\rm Ad}^{k_{N}}_{x}(L)\;R
	\end{equation}
	with $k\ge k_{i},N\ge1$ and $k_{1}+ \dots +k_{N}=k$. For instance, in dimension $n\ge3$ it is sufficient to assume
	that $|A|\le C+ C|x|^{-1}$, thanks to Hardy's inequality.
	
	As a consequence, it follows the condition \textbf{(L)}.

% \bibliography{/Users/piero/.bib/bibliodatabase.bib}
% \bibliographystyle{abbrv}

\end{document}